\newcommand{\I}{\mathrm{i}}
\newcommand{\me}{\mathcal{E}}
\newcommand{\ms}{\mathcal{S}}
\newtheorem{thm}{Theorem}[section]
\newtheorem{lmm}[thm]{Lemma}
\newtheorem{prop}[thm]{Proposition}
\newtheorem{defn}[thm]{Definition}
\theoremstyle{definition}
\newtheorem{ex}[thm]{Example}
\newcommand{\cc}{\mathbb{C}}
\newcommand{\ee}{\mathbb{E}}
\newcommand{\rr}{\mathbb{R}}
\newcommand{\smallavg}[1]{\langle #1 \rangle}
\newcommand{\var}{\mathrm{Var}}
\newcommand{\ve}{\varepsilon}
\newcommand{\zz}{\mathbb{Z}}
\numberwithin{equation}{section}
\renewcommand{\Re}{\operatorname{Re}}
\newcommand{\tmix}{\tau_{\textup{mix}}}
\newcommand{\trel}{\tau_{\textup{rel}}}
\newcommand{\mumin}{\mu_{\textup{min}}}
\newcommand{\Z}{\mathbb{Z}}
\newcommand{\R}{\mathbb{R}}
\newcommand{\C}{\mathbb{C}}
\newcommand{\E}{\mathbb{E}}
\renewcommand{\bar}{\overline}
\renewcommand{\P}{\mathbb{P}}
\renewcommand{\tilde}{\widetilde}
\begin{document}

\begin{frontmatter}
\title{Spectral gap of nonreversible Markov chains}
\runtitle{Spectral gap of nonreversible Markov chains}

\begin{aug}
\author{\fnms{Sourav} \snm{Chatterjee}\ead[label=e1]{souravc@stanford.edu}}

\runauthor{Sourav Chatterjee}

\affiliation{Stanford University}

\address{Department of Statistics\\
390 Jane Stanford Way\\
Stanford University\\
Stanford, CA 94305\\
\printead{e1}}

\end{aug}

\begin{abstract}
We define the spectral gap of a Markov chain on a finite state space as the second-smallest singular value of the generator of the chain, generalizing the usual definition of spectral gap for reversible chains. We then define the relaxation time of the chain as the inverse of this spectral gap, and show that this relaxation time can be characterized, for any Markov chain, as the time required for convergence of empirical averages. This relaxation time is related to the Cheeger constant and the mixing time of the chain through inequalities that are similar to the reversible case, and the path argument can be used to get upper bounds. Several examples are worked out. An interesting finding from the examples is that the time for convergence of empirical averages in nonreversible chains can often be substantially smaller than the mixing time.
\end{abstract}

\begin{keyword}[class=MSC]
\kwd{60J10}
\end{keyword}

\begin{keyword}
\kwd{Markov chain}
\kwd{Nonreversible}
\kwd{Spectral gap}
\kwd{Relaxation time}
\end{keyword}

\end{frontmatter}

\section{Introduction}
\subsection{Setting}
Let $X_0,X_1,\ldots$ be a time-homogeneous stationary Markov chain on a finite state space $\ms$. Let $P$ be the transition matrix of the chain and $L = I-P$ be the generator. Let $\mu$ be an invariant probability measure of our chain, such that $\mu(x)>0$ for all $x\in \ms$. The measure $\mu$ defines an inner product on $\cc^\ms$ in the usual way: For two functions $f,g:\ms\to \C$, 
\begin{align}\label{ipdef}
\smallavg{f,g} := \sum_{x\in \ms} f(x)\overline{g(x)}\mu(x).  
\end{align}
We will denote the norm induced by this inner product by $\|\cdot\|$. 
\begin{defn}
We define the spectral gap of the chain, $\gamma$, to be the second-smallest singular value of $L$ with respect to the above inner product on $\C^{\ms}$ (counting singular values with multiplicities), and call $\tau := 1/\gamma$ the relaxation time of the chain. 
\end{defn}
If the chain is reversible, then $P$ has eigenvalues $1=\lambda_1\ge \lambda_2\ge \cdots \ge \lambda_{|\ms|}\ge -1$ (repeated by multiplicities). Recall that in this case, the spectral gap is defined to be $1-\lambda_2$, which is the same as our definition of spectral gap. The relaxation time $\tau_{\textup{rel}}$ for a reversible chain is defined to be the inverse of the absolute spectral gap $1- \max\{\lambda_2, |\lambda_{|\ms|}|\}$. 
Under the condition $\lambda_2 \ge |\lambda_{|\ms|}|$, which can be easily arranged by making the chain sufficiently lazy, the absolute spectral gap is equal to the spectral gap. In this situation, the classical relaxation time $\tau_{\textup{rel}}$ equals our relaxation time $\tau$. 

Early readers of this paper had trouble understanding the difference between the present notion of spectral gap and the singular value analysis already in use for nonreversible chains. The difference is this: the present paper uses the singular values of $I-P$ while earlier analyses use the singular values of $P$. For nonreversible chains, there is no obvious relation between these. Of course, they have a simple relation for reversible chains. The development below shows how these new singular values can be a useful adjunct to Markov chain analysis.


\subsection{Main result}
Let $\mu g$ denote the $\mu$-average of a function $g:\ms \to \R$, that is, 
\[
\mu g := \sum_{x\in \ms} g(x)\mu(x). 
\]
Given any $n\ge1$, let $\mu_n g$ denote the empirical average
\[
\mu_n g := \frac{1}{n} \sum_{i=0}^{n-1} g(X_i).
\]
For a random variable $Z$, let $\|Z\|_{L^2} := (\E|Z|^2)^{1/2}$ denote its $L^2$ norm (not to be confused with the norm on $\ms$ induced by $\mu$). Taking our Markov chain $X_0,X_1,\ldots$ to be stationary (i.e., $X_0\sim \mu$), we define
\[
\Delta_n := \sup_{g\,:\, \|g-\mu g\|=1} \|\mu_n g - \mu g\|_{L^2}. 
\]
In other words, $\Delta_n$ measures how concentrated the random variable $\mu_n g$ is around the constant $\mu g$, if the starting state is drawn from $\mu$ and $g$ is taken to be the worst possible function. 
The following theorem, which is the  main result of this article, shows that $\Delta_n$ is small if and only if $n\gg \tau$. This gives a characterization of our relaxation time as the time required for convergence of empirical averages to their limiting values in $L^2$.
\begin{thm}\label{avgthm}
For any  $n\ge 1$,
\begin{align}\label{avg1}
\Delta_{n} \le \sqrt{\frac{4\tau}{n}}.
\end{align}
Conversely, for any $n\le \tau/3$, 
\begin{align}\label{avg2}
\Delta_{n} \ge \frac{1}{132}.
\end{align}
Lastly, for any $n\ge 1$, we have the lower bound
\begin{align}\label{avg3}
\max_{n\le k\le 2n} \Delta_k \ge \frac{\tau}{2n+3\tau}. 
\end{align}
\end{thm}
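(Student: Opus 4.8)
\emph{The plan.} I split along the three assertions; the two lower bounds use one and the same test function. First, reductions: replacing $g$ by $g-\mu g$ I may always assume $\mu g=0$, and then $P^kg\in\mathbf 1^\perp$, on which $L=I-P$ is invertible with $\|L^{-1}\|=1/\gamma=\tau$. Since $L$ is a real matrix, $L^*L$ is real symmetric, so its smallest nonzero eigenvalue $\gamma^2$ has a \emph{real} unit eigenvector $v$; automatically $v\perp\mathbf 1$, so $\mu v=0$, $\|v-\mu v\|=1$ and $v$ is admissible in every $\Delta_k$, with $\|Lv\|=\gamma$.

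\emph{Upper bound \eqref{avg1}.} For $n\le 4\tau$ the trivial bound $\Delta_n\le 1\le\sqrt{4\tau/n}$ suffices, so the work is a direct second‑moment estimate. Stationarity gives, for $\mu g=0$,
\begin{align*}
\E|n\mu_n g|^2=2\,\Re\Big\langle g,\ \sum_{k=0}^{n-1}(n-k)P^k g\Big\rangle-n\|g\|^2 .
\end{align*}
The one nontrivial ingredient is the telescoping identity $\sum_{k=0}^{n-1}(n-k)P^k=\bigl(nI-\sum_{k=1}^{n}P^k\bigr)L^{-1}$ on $\mathbf 1^\perp$. Writing $h:=L^{-1}g$, so $\|h\|\le\tau\|g\|$, and using $\|P^kh\|\le\|h\|$ with Cauchy--Schwarz gives $\E|n\mu_n g|^2=2n\Re\langle g,h\rangle-2\sum_{k=1}^{n}\Re\langle g,P^kh\rangle-n\|g\|^2\le 4n\tau\|g\|^2$, and dividing by $n^2$ proves \eqref{avg1}. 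I expect no real difficulty here.

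\emph{The correlation sequence, and the lower bounds for $n\lesssim\tau$ (proving \eqref{avg2}).} Put $c_i:=\langle P^iv,v\rangle\in\R$, so $c_0=1$, $|c_i|\le1$. From $c_{i+1}-c_i=-\langle Lv,(P^*)^iv\rangle$ and $\|(P^*)^iv\|\le1$ we get $|c_{i+1}-c_i|\le\|Lv\|=\gamma$, hence $c_i\ge 1-i\gamma$. Since $(c_{|i-j|})_{i,j\ge 0}$ is the Gram matrix of $(v(X_i))_i$ it is positive semidefinite, so by Herglotz $c_i=\int\cos(i\theta)\,d\nu(\theta)$ for a symmetric probability measure $\nu$ on $(-\pi,\pi]$, with $\int 2\sin^2(\theta/2)\,d\nu=1-c_1=\langle Lv,v\rangle\le\gamma$; averaging the bounds $c_m\ge1-m\gamma$ over $1\le m\le\lfloor\tau/2\rfloor$ moreover forces $\nu(\{|\theta|\le C_0/\tau\})\ge\tfrac12$ for an absolute $C_0$. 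Stationarity plus the Fejér identity give the exact formula $\E|k\mu_k v|^2=\sum_{|m|<k}(k-|m|)c_{|m|}=\int\frac{\sin^2(k\theta/2)}{\sin^2(\theta/2)}\,d\nu=:\int F_k\,d\nu$. Using only $c_m\ge 1-m\gamma$,
\begin{align*}
\E|n\mu_n v|^2\ \ge\ \sum_{|m|<n}(n-|m|)(1-|m|\gamma)\ =\ n^2-\tfrac{\gamma}{3}n(n^2-1)\ \ge\ n^2\Bigl(1-\tfrac{n}{3\tau}\Bigr),
\end{align*}
so $\Delta_n\ge\|\mu_n v\|_{L^2}\ge\sqrt{1-n/(3\tau)}$ for $n<3\tau$. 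For $n\le\tau/3$ this is $\ge\sqrt{8/9}$, beating $1/132$ and proving \eqref{avg2}; it also exceeds $\tau/(2n+3\tau)$ for all $n$ up to a fixed multiple of $\tau$, proving \eqref{avg3} in that range (take $k=n$).

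\emph{Lower bound \eqref{avg3} for $n\gg\tau$ --- the main obstacle.} Here $v$ has already been averaged out, so $\mu_n v$ alone may be small and the doubling window $[n,2n]$ is genuinely needed. The plan is to use the Fourier formula: on the $\nu$‑bulk $\{|\theta|\le C_0/\tau\}$ one has $\sin^2(\theta/2)\le(\theta/2)^2$, whence $\E|k\mu_k v|^2\ge 4\int_{|\theta|\le C_0/\tau}\sin^2(k\theta/2)\,\theta^{-2}\,d\nu$, and it remains to show the maximum over $k\in[n,2n]$ of the right side is at least an absolute constant times $\min(n^2,\tau^2)$. For $|\theta|\gtrsim 1/n$, averaging over $k$ via $\tfrac1{n+1}\sum_{k=n}^{2n}\sin^2(k\theta/2)=\tfrac12-\tfrac1{2(n+1)}\sum_{k=n}^{2n}\cos(k\theta)$ together with $\bigl|\sum_{k=n}^{2n}\cos(k\theta)\bigr|\le|\sin(\theta/2)|^{-1}$ shows this average is $\ge\tfrac14$, so $\max_{[n,2n]}F_k(\theta)\gtrsim\theta^{-2}$ there, while for $|\theta|\lesssim 1/n$ in the bulk one has $F_n(\theta)\gtrsim n^2$ pointwise; integrating against $\nu$ (bulk mass $\ge\tfrac12$) and combining gives $\max_{[n,2n]}\E|k\mu_k v|^2\gtrsim\min(n^2,\tau^2)$, hence $\max_{[n,2n]}\Delta_k\gtrsim\min(1,\tau/n)$ after dividing by $k^2\le4n^2$, and a final bookkeeping of constants converts $\min(1,\tau/n)$ into the stated $\tau/(2n+3\tau)$. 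The delicate part --- the only place I anticipate real work --- is exactly this last step: controlling the oscillatory integral $\int F_k\,d\nu$ \emph{uniformly over the window} (the split at scale $1/n$, the behaviour of $k\theta/2$ near multiples of $\pi$), and extracting the clean denominator $2n+3\tau$ rather than merely $\Theta(n)$.
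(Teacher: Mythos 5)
Your upper bound \eqref{avg1} is essentially the paper's argument: the paper also writes the mean-zero $g$ as $Lf$ via the Poisson equation, uses $\|f\|\le\tau\|g\|$ together with a telescoping identity (there phrased as $\E_x(g(X_k))=\E_x(f(X_k)-f(X_{k+1}))$ and, after summing, $n\,\E_x(\mu_n g)=f(x)-\E_x f(X_n)$), and closes with Cauchy--Schwarz. Your version packages the same ingredients a little more compactly; no disagreement there.

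Your treatment of the lower bounds is genuinely different from the paper's, and this is worth spelling out. You work spectrally: take a real unit singular vector $v$ of $L$ at $\gamma$, form the autocovariance sequence $c_m=\langle P^m v,v\rangle$, observe $|c_{m+1}-c_m|\le\|Lv\|\,\|(P^*)^m v\|\le\gamma$ so $c_m\ge 1-m\gamma$, and then either sum the Fejér identity $\E|n\mu_n v|^2=\sum_{|m|<n}(n-|m|)c_{|m|}$ directly (for $n\lesssim\tau$), or pass to the Herglotz spectral measure $\nu$ and estimate $\int F_k\,d\nu$ over the doubling window (for $n\gg\tau$). The paper instead never invokes a spectral measure: it writes the extremal function as $f=u_n+v_n$ with $u_n(x)=\tfrac1n\sum_{k=n}^{2n-1}(f(x)-\E_x f(X_k))$ and $v_n(x)=\tfrac1n\sum_{k=n}^{2n-1}\E_x f(X_k)$, bounds $\|u_n\|\le 2n\gamma\max_{n\le k\le 2n}\Delta_k$ using the same Poisson identity, bounds $\|v_n\|\le 2\Delta_{2n}+\Delta_n$ by Jensen, and reads off $1\le(2n\gamma+3)\max_{n\le k\le 2n}\Delta_k$ from the triangle inequality, which is exactly \eqref{avg3}. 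It then derives \eqref{avg2} from \eqref{avg3} by a subadditivity lemma. Two remarks on what each route buys. For \eqref{avg2}, your argument is simpler and strictly stronger: $c_m\ge 1-m\gamma$ gives $\E|n\mu_n v|^2\ge n^2(1-n/(3\tau))$ and hence $\Delta_n\ge\sqrt{1-n/(3\tau)}\ge\sqrt{8/9}$ for $n\le\tau/3$, which is far better than $1/132$, and you avoid the subadditivity lemma entirely. (Do state explicitly what happens when $\gamma=0$: then there is a nonconstant $Pv=v$ and $\Delta_n\equiv 1$ directly, as the paper notes.)

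Where I think you are genuinely short is the constant in \eqref{avg3}. Your outline gives, correctly, $\max_{n\le k\le 2n}\Delta_k\ge c\,\min(1,\tau/n)$ for some absolute $c>0$: the split of the $\nu$-bulk at $|\theta|\asymp 1/n$, the averaged Dirichlet bound $\tfrac1{n+1}\sum_{k=n}^{2n}\sin^2(k\theta/2)\ge\tfrac14$ for $|\theta|\gtrsim 1/n$, and the pointwise bound $F_n(\theta)\gtrsim n^2$ for $|\theta|\lesssim 1/n$, combine as you say. But the resulting $c$ is small (it carries factors like $1/C_0^2$ from the bulk radius, the bulk-mass lower bound, a $(2/\pi)^2$, a $1/4$ from averaging, and division by $(2n)^2$ at the end), whereas the stated bound $\tau/(2n+3\tau)$ behaves like $\tau/(2n)$ as $n\to\infty$ and therefore \emph{forces} the leading constant to be $\ge 1/2$. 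The final sentence of your sketch --- ``a final bookkeeping of constants converts $\min(1,\tau/n)$ into the stated $\tau/(2n+3\tau)$'' --- is the place I would push back: no amount of bookkeeping of the pieces you have written down will produce a constant that close to $1/2$. If you only want the order of magnitude $\tau/(n+\tau)$, your Fourier argument is fine and self-contained. If you want the theorem exactly as stated, the paper's decomposition $f=u_n+v_n$ is the cleaner route: the constant $\tau/(2n+3\tau)$ falls out of the single line $1\le\|u_n\|+\|v_n\|\le(2n\gamma+3)\max_{n\le k\le2n}\Delta_k$, with no oscillatory-integral estimates and no loss in the bulk-mass step.
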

Theorem \ref{avgthm} is proved in Section \ref{avgproof}. In the remainder of this section, we study the relationships of our spectral gap with various standard quantities, like the spectral gaps of reversibilized chains, the mixing time in total variation distance, and the Cheeger constant. We also develop a version of the path argument for getting lower bounds on our spectral gap. At the end of the section we discuss the relevant literature. Examples are worked out in Section \ref{examplesec} and all proofs are in Section \ref{proofsec}.

An interesting finding from the examples is that for nonreversible chains, our relaxation time can be significantly smaller than the mixing time of the chain. For a flavor of such results, let us preview one example whose details are presented in Section \ref{torisec}. Consider a random walk on the torus $(\Z/N\Z)^2$ which, at each turn, either moves one step up with probability $1-p$ or moves one step to the right with probability $p$, wrapping around upon reaching the boundary. For any $p\in (0,1)$, the mixing time of this walk is of order $N^2$. Our relaxation time $\tau$ is also of order $N^2$ if $p$ is rational. If $p$ is an irrational algebraic number of degree two, such as $1/\sqrt{2}$, then we show that $\tau$ is of order $N^{4/3}$. Moreover, we show that $N^{4/3}$ is the smallest possible order of $\tau$ for any choice of $p$. In particular, this shows that it is possible to design a random walk on $(\Z/N\Z)^2$ that takes only local steps but empirical averages converge in time $N^{4/3}$ instead of $N^2$.

\subsection{Relation to the reversible case}
For reversible chains, recall the standard result that
\begin{align*}
\tilde{\Delta}_n := \sup_{g\,:\, \|g-\mu g\|=1} \|P^n g - \mu g\| = \biggl(1-\frac{1}{\tau_{\textup{rel}}}\biggr)^n,
\end{align*}
where $\tau_{\textup{rel}}$ is the relaxation time defined earlier. Thus, for reversible chains, the relaxation time is the time at which $\tilde{\Delta}_n$ becomes small. This is similar to $\Delta_n$ becoming small at our relaxation time $\tau$, as shown in Theorem \ref{avgthm}. When $\lambda_2 \ge |\lambda_{|\ms|}|$, we have $\tau=\tau_{\textup{rel}}$, and therefore $\Delta_n$ and $\tilde{\Delta}_n$ become small at the same time. Thus, for reversible chains, the standard definition of relaxation time is the same as our definition, provided that we make the chain lazy enough so that the absolute spectral gap is equal to the spectral gap.

For reversible chains, the relaxation time can be characterized as the time required for decorrelation of $L^2$ functions. To be precise, if $X_0,X_1,\ldots$ is a reversible Markov chain with stationary distribution $\mu$ and relaxation time $\trel$, then for any real-valued $f,g\in L^2(\mu)$ and any $n$, 
\begin{align*}
\textup{Corr}(f(X_0), g(X_n)) &\le \biggl(1-\frac{1}{\tau_{\textup{rel}}}\biggr)^{n}, 
\end{align*}
and equality is attained if we take $f=g=$ the eigenvector of $P$ corresponding to the eigenvalue with the second largest magnitude (i.e., either $\lambda_2$ or $\lambda_{|\ms|}$). For nonreversible chains, this characterization of the relaxation time as the time required for decorrelation is no longer true. For example, consider the Markov chain on $\zz/N\zz$ which either stays where it is with probability $1/2$, or moves one step to the right with probability $1/2$. We will see in Section~\ref{examplesec} that the relaxation time (according to our definition) of this nonreversible Markov chain is of order $N$. But it is not hard to show that the time to decorrelate is of order $N^2$, because $X_n = X_0 + n/2 + O(\sqrt{n}) \bmod N$, which implies that $X_n$ carries information about $X_0$ all the way up to $n$ of order $N^2$.

\subsection{Relation to spectral gaps of reversibilized chains}
Let $P$ be the transition matrix of a Markov chain on a finite state space with invariant measure $\mu$. Let $P^*$ denote the adjoint of $P$ with respect to the inner product \eqref{ipdef} induced by $\mu$. The multiplicative reversibilization of is this chain is the reversible Markov chain with transition matrix $M = PP^*$, and the additive reversibilization is the chain with transition matrix $A = \frac{1}{2}(P+P^*)$. (It is not hard to show that $P^*$, $A$ and $M$ are Markov transition matrices, and that $\mu$ is an invariant measure for all three.) The following result connects the spectral gap of the reversibilized chains with the original chain. 
\begin{thm}\label{revthm}
Let $\gamma$ be the spectral gap of the Markov chain with transition matrix $P$ (according to our definition), and let $\gamma_M$ and $\gamma_A$ be the spectral gaps of the multiplicative and additive reversibilizations of $P$, defined above. Then $\frac{1}{2}\gamma_A\le \gamma \le \sqrt{2\gamma_A}$ and $\frac{1}{2}\gamma_M\le \gamma$. Moreover,  if $P(x,x)\ge 1/2$ for $x\in \ms$, then we also have $\gamma \le \sqrt{2\gamma_M}$.
\end{thm}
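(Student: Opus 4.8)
The plan is to reduce every inequality in the theorem to an elementary comparison of Rayleigh quotients on $\mathbf{1}^{\perp}$. The preliminary step is to record variational formulas for the three gaps. Because $P$, $P^{*}$, $A=\tfrac12(P+P^{*})$ and $M=PP^{*}$ are all Markov matrices with invariant measure $\mu$ --- hence $L^{2}(\mu)$-contractions fixing $\mathbf{1}$ --- the vector $\mathbf{1}$ spans a zero singular direction of each of $I-P$, $I-A$, $I-M$, and Courant--Fischer applied to $(I-P)^{*}(I-P)$ and to the nonnegative self-adjoint operators $I-A$, $I-M$ gives
\begin{align*}
\gamma^{2}=\min_{f\perp\mathbf{1},\,\|f\|=1}\|(I-P)f\|^{2},\qquad
\gamma_{A}=\min_{f\perp\mathbf{1},\,\|f\|=1}\bigl(1-\langle Af,f\rangle\bigr),\qquad
\gamma_{M}=\min_{f\perp\mathbf{1},\,\|f\|=1}\bigl(1-\|P^{*}f\|^{2}\bigr),
\end{align*}
using $\langle Af,f\rangle=\Re\langle Pf,f\rangle$ and $\langle Mf,f\rangle=\|P^{*}f\|^{2}$. (If the zero singular value has multiplicity larger than one, all three quantities vanish, so this causes no trouble.) Throughout I will use $\|Pf\|\le\|f\|$ and $\|P^{*}f\|\le\|f\|$ (Jensen plus invariance of $\mu$), and that $I-P^{*}=(I-P)^{*}$ has the same singular values as $I-P$.

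Three of the four bounds then drop out of pointwise estimates between the integrands above, valid for every unit $f\perp\mathbf{1}$, together with the trivial observation that $\min h\le\min g$ whenever $h\le g$ pointwise. For $\gamma_{A}\le\gamma$ (which a fortiori gives $\tfrac12\gamma_{A}\le\gamma$): $1-\langle Af,f\rangle=\Re\langle(I-P)f,f\rangle\le\|(I-P)f\|$ by Cauchy--Schwarz. For $\tfrac12\gamma_{M}\le\gamma$: the reverse triangle inequality gives $1-\|P^{*}f\|^{2}\le 2\bigl(1-\|P^{*}f\|\bigr)\le 2\|(I-P^{*})f\|$, and minimizing the right side over $\{f\perp\mathbf{1},\|f\|=1\}$ produces $2\gamma$, since $I-P^{*}$ has $\mathbf{1}$ in its kernel and shares its singular values with $I-P$. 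For $\gamma\le\sqrt{2\gamma_{A}}$: expanding $\|(I-P)f\|^{2}=1-2\Re\langle Pf,f\rangle+\|Pf\|^{2}$ and using $\|Pf\|\le 1$ gives $\|(I-P)f\|^{2}\le 2\bigl(1-\Re\langle Pf,f\rangle\bigr)=2\bigl(1-\langle Af,f\rangle\bigr)$, hence $\gamma^{2}\le 2\gamma_{A}$.

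The remaining bound $\gamma\le\sqrt{2\gamma_{M}}$ under the laziness hypothesis is the only step requiring more than a one-line estimate, and the only place a hypothesis is essential --- for the deterministic shift $x\mapsto x+1$ on $\Z/N\Z$ one has $M=PP^{*}=I$, so $\gamma_{M}=0$ while $\gamma$ is of order $1/N$. I would prove $\gamma_{M}\ge\gamma_{A}$ for lazy chains and combine it with $\gamma^{2}\le 2\gamma_{A}$. Write $P=\tfrac12(I+Q)$ with $Q=2P-I$; the hypothesis $P(x,x)\ge\tfrac12$ is precisely what makes $Q$ a Markov matrix with invariant measure $\mu$, so that $\|Q^{*}f\|\le\|f\|$. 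The operator identity $\tfrac12(P+P^{*})-PP^{*}=\tfrac14(I-QQ^{*})$ then yields, for every unit $f$,
\begin{align*}
\bigl(1-\|P^{*}f\|^{2}\bigr)-\bigl(1-\langle Af,f\rangle\bigr)=\langle(A-M)f,f\rangle=\tfrac14\bigl(1-\|Q^{*}f\|^{2}\bigr)\ge 0,
\end{align*}
so that $\gamma_{M}\ge\gamma_{A}$ by the pointwise-minimum principle, and therefore $\gamma^{2}\le 2\gamma_{A}\le 2\gamma_{M}$. The only genuine work is the careful bookkeeping in the first paragraph (the contractivity, the singular values of the adjoint, the multiplicity of the zero singular value) and noticing that laziness enters solely through the identity $\tfrac12(P+P^{*})-PP^{*}=\tfrac14(I-QQ^{*})$ and the contractivity of $Q^{*}$; everything else is Cauchy--Schwarz and the triangle inequality.
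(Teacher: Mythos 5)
Your proof is correct, and in two places it takes a cleaner route than the paper's. The biggest divergence is in the lower bound $\frac12\gamma_A\le\gamma$: you establish the \emph{sharper} inequality $\gamma_A\le\gamma$ in one line from $1-\langle Af,f\rangle=\Re\langle(I-P)f,f\rangle\le\|(I-P)f\|$, whereas the paper routes through the auxiliary lazy chain $R=\tfrac12(I+P)$ and a singular-value--vs--eigenvalue comparison of Fill, losing a factor of two along the way. (So your argument actually improves the constant in the theorem statement.) The second divergence is the final bound: the paper simply cites Fill's \cite[Equation (2.17)]{fill91} for the inequality $\gamma_M\ge\gamma_A$ under laziness, while you make the argument self-contained via the operator identity $A-M=\tfrac14(I-QQ^*)$ with $Q=2P-I$, observing that laziness is exactly what makes $Q$ stochastic and hence $Q^*$ an $L^2(\mu)$-contraction; this is a nice localization of where the laziness hypothesis enters. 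For $\gamma^2\le 2\gamma_A$ your computation $\|Lf\|^2=1-2\langle Af,f\rangle+\|Pf\|^2\le 2\langle L_Af,f\rangle$ is the same identity the paper writes as $LL^*=2L_A-L_M$, just with the dropped term identified as $\|Pf\|^2\le 1$ rather than as $\langle L_Mf,f\rangle\ge 0$; and your $\tfrac12\gamma_M\le\gamma$ via $1-\|P^*f\|^2\le 2\|(I-P^*)f\|$ mirrors the paper's reverse-triangle argument (which uses $\|(I-P)f\|\ge(1-\sqrt{\lambda_M})\|f\|$), both exploiting that $I-P$ and $I-P^*$ share singular values. One small thing you handled carefully that is easy to overlook: when the zero singular value of $L$ has multiplicity greater than one, a fixed vector $f\perp\mathbf{1}$ of $P$ is automatically fixed by $P^*$ (since $P$ is a contraction), so $\gamma$, $\gamma_A$, $\gamma_M$ all vanish simultaneously and the Courant--Fischer identities remain valid.
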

This result will be proved in Section \ref{revproof}. In the above, it is clear that the lower bounds cannot be improved other than perhaps the constants, because if our Markov chain is reversible, then $\gamma_A=\gamma$ and $\gamma_M = 1-(1-\gamma)^2\le 2\gamma$. Interestingly, the upper bounds also cannot be improved. To see this, consider the Markov chain on $\Z/N\Z$ that stays where it is with probability $1/2$ and moves one step to the right with probability $1/2$. It is not hard to show that $\gamma$ is of order $1/N$ for this chain, whereas $\gamma_M$ and $\gamma_A$ are both of order $1/N^2$. The condition that $P(x,x)=1/2$ for all $x$ (or at least some holding) is also necessary for the lower bound on $\gamma_M$. This can be seen by considering the Markov chain on $\Z/N\Z$ that always moves one step to the right. For this chain too, $\gamma$ is of order $1/N$. But the $M$-chain does not move at all, and therefore $\gamma_M = 0$. 

\subsection{Relation to mixing time}
Recall that the total variation distance between two probability measures $\mu$ and $\nu$ on the finite set $\ms$ is defined as
\[
\|\mu-\nu\|_{\textup{TV}} := \max_{A\subseteq \ms} |\mu(A)-\nu(A)|,
\]
and given $\ve >0$, the mixing time $\tau_{\textup{mix}}(\ve)$ of our Markov chain (with stationary distribution $\mu$) is defined as 
\[
\tau_{\textup{mix}}(\ve) := \inf\{n \ge 0: \|P^n_x - \mu\|_{\textup{TV}}\le \ve\},
\]
where $P^n_x$ is the law of $X_n$ given $X_0=x$. The mixing time, roughly speaking, is the number of steps required for the chain to become approximately independent of its starting state, in the sense that no test can reliably distinguish between one starting state and another. For reversible chains, the relaxation time and the mixing time satisfy the pair of inequalities
\begin{align}\label{relineq}
(\tau_{\textup{rel}} - 1)\log \frac{1}{2\ve} \le \tau_{\textup{mix}}(\ve) \le \tau_{\textup{rel}} \log \frac{1}{\ve \mu_{\textup{min}}},
\end{align}
where $\mu_{\textup{min}}:= \min_{x\in \ms} \mu(x)$ and $\ve \in (0,1/2)$. (For a proof, see \cite[Chapter 12]{levinetal09}.) These bounds are often suboptimal, and may be far bigger than $\tmix(\ve)$; this happens in card shuffling~\cite{diaconisshahshahani81}, where $\mu_{\textup{min}}$ is $1/N!$, $N$ being the number of cards. The following result gives analogous bounds for our relaxation time. The proof is in Section \ref{mixproof}. 
\begin{thm}\label{mixthm}
Take any $\ve \in (0,1/5)$. Let $\tau_{\textup{mix}}(\ve)$ be defined as above, and let $\tau$ be the relaxation time according to our definition. Then 
\begin{align*}
\tau \le \biggl(\frac{4}{\log(2/(1+4\ve+2\ve^2))} + 2\biggr)  \tmix(\ve). 
\end{align*}
Moreover, if $P(x,x)\ge 1/2$ for all $x$, then for any $\ve \in (0,1/2)$,
\[
\tmix(\ve) \le 1 + 12 \tau^2\log \frac{1}{2\ve \mumin}.
\]
\end{thm}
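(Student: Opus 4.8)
The statement has two halves, established by arguments of opposite flavor: the first bound comes from lower-bounding the spectral gap $\gamma$ using total-variation mixing, the second from upper-bounding total-variation mixing via an $L^2$ ($\chi^2$) estimate. Throughout I would use the variational description $\gamma = \min\{\|(I-P)f\| : \mu f = 0,\ \|f\|=1\}$, valid because $\mathbf 1$ is a singular vector of $I-P$ for the singular value $0$ and $\{\mathbf 1\}^{\perp}$ is invariant under $(I-P)^*(I-P)$.

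For the first inequality, write $m := \tmix(\ve)$. The plan is: (i) prove the $L^2$-contraction $\|P^m f\|\le 2\sqrt{\ve}\,\|f\|$ for every mean-zero $f$; (ii) use the telescoping identity $I-P^m=\sum_{k=0}^{m-1}P^k(I-P)$ together with $\|P\|_{L^2(\mu)\to L^2(\mu)}\le 1$ to get $\|(I-P^m)f\|\le m\,\|(I-P)f\|$; (iii) combine, so that for mean-zero $f$ with $\|f\|=1$ the reverse triangle inequality and step (i) give $1-2\sqrt{\ve}\le \|(I-P^m)f\|\le m\,\|(I-P)f\|$, hence $\gamma\ge (1-2\sqrt{\ve})/m$ and $\tau\le \tmix(\ve)/(1-2\sqrt{\ve})$, which implies the stated bound (the constant is not optimized here). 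Step (i) is the one genuinely nontrivial point, since total-variation mixing does not by itself control the $L^2\to L^2$ action of $P^m$ on mean-zero functions (a rotation mixes in no sense yet contracts nothing). The argument I have in mind: write $P^m f(x)=\sum_y(P^m(x,y)-\mu(y))f(y)$, split $P^m(x,\cdot)-\mu$ into positive and negative parts $a^{(x)},b^{(x)}$ (each of total mass $\|P^m_x-\mu\|_{\textup{TV}}\le\ve$), apply Cauchy--Schwarz in the form $\bigl(\sum_y a^{(x)}_y f(y)\bigr)^2\le\bigl(\sum_y a^{(x)}_y\bigr)\bigl(\sum_y a^{(x)}_y f(y)^2\bigr)$ and similarly for $b^{(x)}$, then sum over $x$ against $\mu$ using $a^{(x)}_y\le P^m(x,y)$, $b^{(x)}_y\le\mu(y)$ and stationarity; this yields $\|P^m f\|^2\le 4\ve\|f\|^2$, and $2\sqrt{\ve}<1$ since $\ve<1/5$.

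For the second inequality, the plan is a $\chi^2$ argument routed through the multiplicative reversibilization $M=PP^*$ and finished with Theorem \ref{revthm}. Set $g_x(y):=\1[y=x]/\mu(x)-1$, which is mean-zero with $\|g_x\|^2=(1-\mu(x))/\mu(x)$; the identity $P^n(x,\cdot)/\mu=(P^*)^n(\mathbf 1+g_x)=\mathbf 1+(P^*)^n g_x$ gives $\chi^2_x(n):=\sum_y(P^n(x,y)-\mu(y))^2/\mu(y)=\|(P^*)^n g_x\|^2$. The key contraction is that for mean-zero $\phi$ one has $\|P^*\phi\|^2=\langle\phi,M\phi\rangle\le\lambda_2(M)\|\phi\|^2$, because $M$ is self-adjoint, positive semidefinite, fixes $\mathbf 1$, and $P^*$ preserves the $\mu$-mean; iterating (each intermediate $(P^*)^k g_x$ is mean-zero) gives $\chi^2_x(n)\le\lambda_2(M)^n\|g_x\|^2\le\lambda_2(M)^n/\mumin$, whence $\|P^n_x-\mu\|_{\textup{TV}}\le\tfrac12\sqrt{\chi^2_x(n)}\le\tfrac12\lambda_2(M)^{n/2}\mumin^{-1/2}$. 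This is $\le\ve$ once $n\ge 2\log(1/(2\ve\sqrt{\mumin}))/(-\log(1-\gamma_M))$; using $-\log(1-\gamma_M)\ge\gamma_M$ and $\sqrt{\mumin}\ge\mumin$, it suffices that $n\ge 2\tau_M\log\frac1{2\ve\mumin}$ with $\tau_M:=1/\gamma_M$, so $\tmix(\ve)\le 1+2\tau_M\log\frac1{2\ve\mumin}$. Finally, since $P(x,x)\ge 1/2$ for all $x$, Theorem \ref{revthm} gives $\gamma\le\sqrt{2\gamma_M}$, i.e. $\tau_M\le 2\tau^2$, and therefore $\tmix(\ve)\le 1+4\tau^2\log\frac1{2\ve\mumin}$, which is the asserted inequality (the constant $12$ leaves slack to absorb rounding and the passage from $\sqrt{\mumin}$ to $\mumin$).

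The only delicate step is the $L^2$-contraction estimate in the first part; the second part is routine $L^2$ Markov-chain theory once one imports the comparison $\gamma\le\sqrt{2\gamma_M}$ from Theorem \ref{revthm}. The appearance of $\tau^2$ rather than $\tau$ is the combined effect of the $\chi^2$-divergence decaying like $\lambda_2(M)^n$ (so total variation like $\lambda_2(M)^{n/2}$, forcing $n$ of order $\tau_M$) and of $\tau_M$ being comparable to $\tau^2$.
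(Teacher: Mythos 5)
Your proof is correct, and it is a genuinely different route from the paper's on both halves. For the first inequality, the paper first reduces to bounding the relaxation time $\tau'$ of $P^n$ (via the same telescoping observation $\tau\le n\tau'$), then passes to the \emph{additive reversibilization} $A$ of $P^n$, bounds the TV distance of $A^2$ to $\mu$ using a submultiplicativity lemma (Lemma~\ref{linftylmm}), applies the classical reversible inequality \eqref{relineq} to $A$, and finally transfers back via Theorem~\ref{revthm}. You instead extract an $L^2\to L^2$ contraction \emph{directly} from TV mixing: splitting $P^m(x,\cdot)-\mu$ into positive and negative parts of mass $\le\ve$, Cauchy--Schwarz against each, domination by $P^m(x,\cdot)$ and $\mu(\cdot)$ respectively, and stationarity give $\|P^m f\|^2\le 4\ve\|f\|^2$ for mean-zero $f$. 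This avoids the reversibilization detour entirely, is more elementary, and, after checking, gives a numerically sharper constant $1/(1-2\sqrt\ve)$ than the paper's $4/\log(2/(1+4\ve+2\ve^2))+2$ throughout $(0,1/5)$, so it does imply the stated bound. For the second inequality, the paper iterates $\|Pf\|^2\le(1-\gamma_M)\|f\|^2$ and then converts $\|P^n f\|$ into a TV bound using the crude inequality $|\E_x(f(X_n))-\mu f|\le\mu(x)^{-1}\|P^n f-\mu f\|$, which picks up a factor $\mu(x)^{-1}$; you instead run the standard $\chi^2$ argument via $(P^*)^n g_x$ and $M=PP^*$, which only costs $\mumin^{-1/2}$ and gives $\tmix(\ve)\le 1+4\tau^2\log\frac{1}{2\ve\mumin}$ (the paper's proof, despite Theorem~\ref{revthm} asserting $\gamma\le\sqrt{2\gamma_M}$, uses $\gamma\le\sqrt{6\gamma_M}$ in this step, an apparent leftover from an earlier draft, which is why it ends with $12\tau^2$). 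Both of your routes are valid, both deliver stronger constants, and the key structural point---that $\tau^2$ rather than $\tau$ is unavoidable here because the $\chi^2$ argument loses a square through $\tau_M\asymp\tau^2$---is correctly identified.
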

Note the important difference with \eqref{relineq} that the upper bound on $\tmix(\ve)$ in the above result is of order $\tau^2$ instead of $\tau$. This is actually necessary, as can be seen by considering the chain on $\Z/N\Z$ that moves to the right with probability $1/2$ and stays where it is with probability $1/2$. The mixing time of this chain is of order $N^2$, whereas our relaxation time is of order $N$. The condition that $P(x,x)\ge 1/2$ for all $x$ (or at least some holding) is also necessary, as can be seen by considering the chain on $\Z/N\Z$ that moves to the right at each step, which never mixes but has $\tau$ of order $N$.

In Section \ref{examplesec}, we will see nontrivial examples of nonreversible Markov chains where our relaxation time is significantly smaller than the mixing time. For reversible chains, there are many examples where this is already known to be true. The random transpositions walk on the permutation group $S_N$ is one of the first such examples, where Diaconis and Shahshahani~\cite{diaconisshahshahani81} showed that the relaxation time is of order $N$ and the mixing time is of order $N\log N$. Another early example is the simple random walk on a high dimensional hypercube~\cite{aldous83}. Numerous other examples are now available in the literature. In all such examples, empirical averages converge much before the chain mixes.

\subsection{Relation to the Cheeger constant}
As before, let $X_0,X_1,\ldots$ be a stationary Markov chain on a finite state space $\ms$ with transition matrix $P$, invariant measure $\mu$, and spectral gap (according to our definition) $\gamma$. For $x,y\in \ms$, define 
\[
Q(x,y) := \mu(x) P(x,y). 
\]
That is, $Q$ is the joint distribution of $(X_0,X_1)$. For $A, B\subseteq \ms$, let
\[
Q(A, B) := \P(X_0\in A, \, X_1\in B) = \sum_{x\in A, \, y\in B} Q(x,y). 
\]
Let $A^c$ denote the complement of $A$ in $\ms$, that is, the set $\ms \setminus A$. The Cheeger constant (or the bottleneck ratio) of the chain is defined as 
\begin{align}\label{cheegerdef}
\xi := \min_{A\subseteq \ms, \, 0< \mu(A)\le 1/2} \frac{Q(A,A^c)}{\mu(A)}. 
\end{align}
A famous property of the Cheeger constant is that for reversible Markov chains, it can be used to get upper and lower bounds on the spectral gap through the following inequalities~\cite{lawlersokal88, jerrumsinclair89}:
\begin{align}\label{cheegerineq}
\frac{\xi^2}{2}\le \gamma \le 2\xi.
\end{align}
(For a proof, see \cite[Theorem 13.14]{levinetal09}.) If the chain is made sufficiently lazy so that the spectral gap equals the absolute spectral gap, then the above inequalities give estimates for the relaxation time in terms of the Cheeger constant.

For nonreversible chains, the Cheeger constant can be used to give lower bounds on the mixing time. For example, \cite[Theorem 7.3]{levinetal09}  says that for any Markov chain,
\[
\tmix(1/4)\ge \frac{1}{4\xi}. 
\]
For random walks on directed graphs, an analogue of the Cheeger constant was formulated by Chung~\cite{chung05}, who used it to obtain upper bounds on the mixing time. However, for general nonreversible Markov chains, there seems to be no analogue of \eqref{cheegerineq} in the literature. The following theorem gives such a pair of inequalities for our spectral gap.
\begin{thm}\label{cheegerthm}
Let $X_0,X_1,\ldots$ be a Markov chain on a finite state space with invariant measure $\mu$ and spectral gap $\gamma$ (according to our definition). Let $\xi$ be the Cheeger constant of the chain,  defined in \eqref{cheegerdef}. Then 
\[
\frac{\xi^2}{16}\le \gamma \le 32\xi.
\]
\end{thm}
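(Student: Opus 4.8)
The plan is to treat the two inequalities separately. The lower bound $\gamma \ge \xi^2/16$ I would obtain by reducing to the classical reversible Cheeger inequality \eqref{cheegerineq} applied to the additive reversibilization $A = \tfrac12(P+P^*)$. The key observation is that $A$ has the \emph{same} Cheeger constant as the original chain: since $\mu(x)P^*(x,y) = \mu(y)P(y,x)$, the edge measure of $A$ is $Q_A(x,y) = \tfrac12(Q(x,y)+Q(y,x))$, and stationarity forces $Q(B,B^c) = Q(B^c,B)$ for every $B \subseteq \ms$ (both equal $\mu(B) - Q(B,B)$), so $Q_A(B,B^c) = Q(B,B^c)$ and hence $\xi_A = \xi$. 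Applying \eqref{cheegerineq} to the reversible chain $A$ gives $\gamma_A \ge \xi^2/2$, and then Theorem~\ref{revthm} yields $\gamma \ge \tfrac12\gamma_A \ge \xi^2/4 \ge \xi^2/16$. (One can even skip Theorem~\ref{revthm} here: for real $f$ with $\mu f = 0$, Cauchy--Schwarz gives $\|Lf\|\,\|f\| \ge \Re\langle Lf,f\rangle = \langle L_A f, f\rangle \ge \gamma_A\|f\|^2$, so $\gamma \ge \gamma_A$ outright.)

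For the upper bound $\gamma \le 32\xi$, let $A$ attain the minimum in \eqref{cheegerdef}, write $a = \mu(A) \le 1/2$, so $Q(A,A^c) = \xi a$. When $\xi$ is not small I would just use the centered indicator $f = \1_A - a$: here $\|f\|^2 = a(1-a) \ge a/2$, while $Lf = L\1_A$ equals $P(x,A^c)$ on $A$ and $-P(x,A)$ on $A^c$, so $\|L\1_A\|^2 \le Q(A,A^c) + Q(A^c,A) = 2\xi a$ (using $p^2 \le p$ for $p \in [0,1]$), giving $\gamma \le 2\sqrt{\xi}$, which is already $\le 32\xi$ once $\xi \ge 1/256$. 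For $\xi < 1/256$ this bound is too weak, and I would instead use the Ces\`aro-smoothed test function
\[
\psi := \sum_{j=0}^{m-1}\Bigl(1 - \tfrac{j}{m}\Bigr) P^j \1_A, \qquad m := \bigl\lfloor 1/(4\xi)\bigr\rfloor
\]
(so $m$ is large and $m\xi \le 1/4$), after centering it. A telescoping computation gives the clean identity $L\psi = \tfrac1m\sum_{j=1}^m (I - P^j)\1_A$.

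The two quantitative estimates needed are then: (i) $\|L\psi\|$ is small, and (ii) $\|\psi - \mu\psi\|$ is large. For (i), the crucial input is the first-exit union bound $\P_\mu(X_0 \in A,\, X_j \notin A) \le j\,Q(A,A^c) = j\xi a$ (take the first index $i\le j$ at which the chain leaves $A$), together with its time-reversed counterpart; combined with $p^2 \le p$ this yields $\|(I-P^j)\1_A\|^2 \le 2j\xi a$, hence $\|L\psi\| \le \tfrac1m\sum_{j=1}^m\sqrt{2j\xi a} \le \sqrt{2m\xi a}$. For (ii) I would bound $\|\psi - \mu\psi\|$ below by its projection onto $\1_A - a$: since $\langle P^j(\1_A - a), \1_A - a\rangle = \P_\mu(X_0\in A, X_j \in A) - a^2 \ge a(\tfrac12 - j\xi) \ge \tfrac14 a$ for $j \le m-1$, one gets $\langle \psi - \mu\psi, \1_A - a\rangle \ge am/8$ and therefore $\|\psi - \mu\psi\| \ge \sqrt{a}\,m/8$. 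Putting these together, $\gamma^2 \le \|L\psi\|^2/\|\psi-\mu\psi\|^2 \le 128\xi/m$, and since $m \ge 1/(4\xi) - 1 \ge 1/(8\xi)$ in this regime, $\gamma^2 \le 1024\,\xi^2$, i.e. $\gamma \le 32\xi$ (with room to spare in the constant).

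I expect the main obstacle to be exactly the upper bound: the obvious test function $\1_A$ only certifies $\gamma \lesssim \sqrt{\xi}$ (the singular value controls $\|Lf\|$, not the Dirichlet form $\Re\langle Lf,f\rangle$, so there is a genuine square-root loss if one goes through the reversibilization), and one really needs the smoothing over $m \asymp 1/\xi$ steps — exploiting that the errors $\|(I-P^j)\1_A\|^2$ grow only \emph{linearly} in $j$ via the first-exit argument — to recover the correct linear-in-$\xi$ rate. Pinning the numerical constant down to $32$ rather than merely $O(1)$ requires some care in choosing $m$ and in handling the two regimes $\xi \ge 1/256$ and $\xi < 1/256$, but involves no further ideas.
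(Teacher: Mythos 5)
Your proof is correct, and while the lower bound follows the paper's route, your upper bound argument is genuinely different. For $\gamma \ge \xi^2/16$, both you and the paper pass to the additive reversibilization $A=\tfrac12(P+P^*)$ and invoke the reversible Cheeger inequality plus Theorem~\ref{revthm}; your observation that stationarity forces $Q(B,B^c)=Q(B^c,B)$ for \emph{every} $B$, so that $\xi_A=\xi$ exactly, is a slight sharpening — the paper only uses $\xi_A\ge \tfrac12\xi$, and you end up with $\gamma\ge\xi^2/4$ (or $\xi^2/2$ via the direct Cauchy--Schwarz variant), whereas the paper gets $\xi^2/16$. The interesting divergence is in the upper bound $\gamma\le 32\xi$: the paper works with the stopped product $Y_n=\1_A(X_0)\cdots\1_A(X_n)$ to show $\P(X_0,\dots,X_{n-1}\in A)\ge(1-(n-1)\xi)\mu(A)$, plugs the normalized indicator into the empirical-average bound $\|\mu_n f\|_{L^2}^2\le 4\tau/n$ from Theorem~\ref{avgthm}, and optimizes over $n\asymp 1/\xi$ — so it uses Theorem~\ref{avgthm} as a black box. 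You instead construct the explicit Ces\`aro-smoothed test function $\psi=\sum_{j<m}(1-j/m)P^j\1_A$ with $m\asymp 1/\xi$, exploit the telescoping identity $L\psi=\tfrac1m\sum_{j=1}^m(I-P^j)\1_A$, and bound $\|L\psi\|$ and $\|\psi-\mu\psi\|$ directly via the first-exit estimate $\P_\mu(X_0\in A,X_j\notin A)\le j\xi\mu(A)$; this is entirely self-contained and makes no appeal to Theorem~\ref{avgthm}. Both routes implement the same underlying idea — time-averaging over $\sim 1/\xi$ steps to beat the square-root loss that the raw indicator gives — but yours realizes it as an explicit singular-value test function rather than through the $L^2$ empirical-average machinery, and correctly diagnoses the need for the two-regime split to pin down the constant. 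All the quantitative steps (the telescoping identity, the bound $\|(I-P^j)\1_A\|^2\le 2j\xi\mu(A)$, the projection lower bound $\|\psi-\mu\psi\|\ge\sqrt{a}\,m/8$, and the final optimization) check out.
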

Theorem \ref{cheegerthm} is proved in Section \ref{cheegerproof}. Note that the bounds are exactly the same as in the reversible case \eqref{cheegerineq}, but with worse constants. It is not clear if the constants can be improved. 

\subsection{Path argument for nonreversible chains}\label{pathsec}
The following theorem shows that the path argument of Diaconis and Stroock~\cite{diaconisstroock91} can be used to get lower bounds on our spectral gap for nonreversible chains. As usual, we consider a stationary Markov chain $X_0,X_1,\ldots$ on a finite state space $\ms$, transition matrix $P$, and invariant probability measure $\mu$. 
\begin{thm}\label{paththm}
Let $E$ be the set of ordered pairs $(x,y)$ such that $Q(x,y) := \mu(x)P(x,y)$ is strictly positive. The set $E$ defines a directed graph on $\ms$. Suppose that for each pair of distinct points $x,y\in \ms$, we have a directed path $\Gamma_{x,y}$ from $x$ to $y$ in this graph.  Let
\[
B:= \max_{e\in E}\frac{1}{Q(e)}\sum_{x,y\in \ms: \,\Gamma_{x,y}\ni e} \mu(x)\mu(y) |\Gamma_{x,y}|
\]
Then our spectral gap $\gamma$ satisfies $\gamma \ge 1/B$.
\end{thm}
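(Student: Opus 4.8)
The plan is to deduce the bound from the variational characterization of $\gamma$ as a singular value. Since $L=I-P$ is a real linear operator it suffices to test against real-valued functions, and because the directed graph $E$ is strongly connected (paths exist between all pairs) the chain is irreducible, so $\ker L$ is spanned by the constant function $1$. Hence the second-smallest singular value of $L$ is
\[
\gamma=\min\Bigl\{\tfrac{\|Lf\|}{\|f\|}\ :\ f:\ms\to\R,\ \mu f=0,\ f\ne 0\Bigr\},
\]
and it is enough to prove $\|Lf\|\ge\|f\|/B$ for every real, mean-zero $f$.

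First I would pass from $\|Lf\|$ to a Dirichlet-type form via Cauchy--Schwarz, $\|f\|\,\|Lf\|\ge\langle f,Lf\rangle$, so that it suffices to establish the Poincar\'e inequality $\|f\|^2\le B\,\langle f,Lf\rangle$. Here one checks directly, using $\sum_{y}Q(x,y)=\mu(x)$ and---crucially---the invariance $\sum_{x}Q(x,y)=\mu(y)$, that
\[
\langle f,Lf\rangle=\tfrac12\sum_{x,y\in\ms}Q(x,y)\bigl(f(x)-f(y)\bigr)^2\ \ge\ 0 .
\]
(This is the Dirichlet form of the additive reversibilization of $P$, but it is cleanest to verify the identity by hand from the asymmetric kernel $Q$; this is the only place invariance of $\mu$, rather than reversibility, is used.)

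The Poincar\'e inequality itself is the classical Diaconis--Stroock path argument run verbatim. Since $\mu f=0$ we have $\|f\|^2=\operatorname{Var}_\mu(f)=\tfrac12\sum_{x,y}\mu(x)\mu(y)(f(x)-f(y))^2$. For $x\ne y$, telescoping $f$ along the path $\Gamma_{x,y}$ gives $f(x)-f(y)=-\sum_{e\in\Gamma_{x,y}}\nabla f(e)$ with $\nabla f(e):=f(v)-f(u)$ for $e=(u,v)$, so Cauchy--Schwarz yields $(f(x)-f(y))^2\le|\Gamma_{x,y}|\sum_{e\in\Gamma_{x,y}}(\nabla f(e))^2$. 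Substituting this into the variance formula and interchanging the order of summation,
\[
\|f\|^2\le\tfrac12\sum_{e\in E}(\nabla f(e))^2\sum_{x,y\in\ms:\,\Gamma_{x,y}\ni e}\mu(x)\mu(y)|\Gamma_{x,y}|\ \le\ \tfrac{B}{2}\sum_{e\in E}Q(e)(\nabla f(e))^2=B\,\langle f,Lf\rangle,
\]
using the definition of $B$ and then the identity above. Combining with $\langle f,Lf\rangle\le\|f\|\,\|Lf\|$ gives $\|f\|^2\le B\|f\|\,\|Lf\|$, hence $\|Lf\|/\|f\|\ge1/B$; taking the minimum over $f$ yields $\gamma\ge1/B$.

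I expect the one genuine subtlety to be the order of operations: applying the path bound directly to $\langle f,Lf\rangle$ and only afterwards invoking Cauchy--Schwarz to reach $\|Lf\|$ is what keeps the constant at $1/B$, whereas routing the argument through $\gamma_A$ and Theorem~\ref{revthm} would lose a factor of two. The remaining ingredients---the identity for $\langle f,Lf\rangle$ and the reduction to real mean-zero test functions---are routine bookkeeping.
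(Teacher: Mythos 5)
Your proof is correct and follows essentially the same route as the paper: both rest on the identity $\langle f,Lf\rangle=\tfrac12\sum_{x,y}Q(x,y)(f(x)-f(y))^2$ (valid for any invariant $\mu$, not just reversible), the Diaconis--Stroock path/Cauchy--Schwarz bound relating $\operatorname{Var}_\mu(f)$ to this Dirichlet-type form, and then the single non-reversible ``twist'' $\langle f,Lf\rangle\le\|f\|\,\|Lf\|$ to pass from the quadratic form to the singular value. The only cosmetic difference is that the paper phrases the argument via a minimizer $f_0$ of $\me(f)$ whereas you work directly with an arbitrary mean-zero real $f$; the constants and the logic are identical, and your closing remark about why this ordering preserves the sharp $1/B$ (rather than $1/(2B)$ via $\gamma_A$) correctly anticipates the paper's own commentary.
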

Theorem \ref{paththm} is proved in Section \ref{pathproof}. The proof is almost identical to the proof in the reversible case, with one small twist.

One caveat about Theorem \ref{paththm} is that it can give suboptimal results in examples where the relaxation time of $P$ according our definition is much smaller than the relaxation time of the additive reversibilization $A = \frac{1}{2}(P+P^*)$. For example, this happens for biased random walks on $(\Z/N\Z)^d$. Also, a comparable version of Theorem \ref{paththm} can be derived by combining the usual path argument for $A$~\cite[Corollary 13.24]{levinetal09} and the inequality $\gamma \ge \frac{1}{2}\gamma_A$ from our Theorem \ref{revthm}. 

\subsection{Relation to eigenvalues of the transition matrix}
In general, there is no simple relation between the eigenvalues of the transition matrix $P$ and the singular values of the generator $L$ when the Markov chain is nonreversible. But there is one important exception, when $P$ is a normal matrix with respect to the inner product \eqref{ipdef}, meaning that $P^*P = PP^*$. Under this condition, the spectral theorem for finite dimensional complex inner product spaces implies that $P$ can be decomposed as $UDU^*$ for some diagonal matrix $D$ and some matrix $U$ that is unitary with respect to the inner product \eqref{ipdef} on $\C^\ms$. Thus, $L = U(I-D)U^*$, and hence 
\begin{align}\label{lprel}
L^*L = U(I-D)^*(I-D)U^*.
\end{align}
Let $\lambda_0,\lambda_1,\ldots, \lambda_{|\ms|-1}$ be the diagonal elements of $D$, arranged such that 
\begin{align}\label{arrange}
0=|1-\lambda_0|\le |1-\lambda_1|\le \cdots \le |1-\lambda_{|\ms|-1}|.
\end{align}
Note that $\lambda_0, \lambda_1,\ldots$ are the eigenvalues of $P$, repeated by multiplicities. The equation \eqref{lprel} shows that $|1-\lambda_0|, |1-\lambda_1|,\ldots$ are the singular values of $L$. Thus, we arrive at the following theorem.
\begin{thm}\label{revnormthm}
Suppose that $P^*P = PP^*$. Let $\lambda_0, \lambda_1,\ldots$ be the eigenvalues of $P$, repeated by multiplicities, and arranged in such a way that \eqref{arrange} holds. Then $|1-\lambda_1|$ is the second-smallest singular value of $L$. 
\end{thm}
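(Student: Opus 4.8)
The plan is to diagonalize $L^*L$ explicitly using the assumed normality of $P$, and then read off its eigenvalues. First I would invoke the spectral theorem for normal operators on the finite-dimensional complex inner product space $(\C^\ms,\smallavg{\cdot,\cdot})$: since $P^*P=PP^*$, there is an orthonormal basis of $\C^\ms$ (orthonormal with respect to the inner product \eqref{ipdef}) consisting of eigenvectors of $P$. Collecting these eigenvectors as the columns of a matrix $U$ and the corresponding eigenvalues into a diagonal matrix $D$ with diagonal entries $\lambda_0,\ldots,\lambda_{|\ms|-1}$, we obtain $P=UDU^*$ with $U$ unitary in the sense that $U^*U=I$, where $U^*$ is the adjoint with respect to \eqref{ipdef}. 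These $\lambda_i$ are precisely the eigenvalues of $P$ repeated according to multiplicity.

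Next I would subtract from the identity to get $L=I-P=U(I-D)U^*$, take adjoints, and multiply, using $U^*U=I$, to arrive at
\[
L^*L = U(I-D)^*(I-D)U^* = U\operatorname{diag}\bigl(|1-\lambda_0|^2,\ldots,|1-\lambda_{|\ms|-1}|^2\bigr)U^*,
\]
which is exactly the relation \eqref{lprel}. This displays $L^*L$ as unitarily similar to a diagonal matrix with entries $|1-\lambda_i|^2$, so the eigenvalues of $L^*L$, counted with multiplicity, are the numbers $|1-\lambda_i|^2$, and hence the singular values of $L$ (the nonnegative square roots of these eigenvalues) are exactly $|1-\lambda_0|,\ldots,|1-\lambda_{|\ms|-1}|$, again counted with multiplicity.

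Finally I would note that the constant function is fixed by any stochastic matrix, so $1$ is an eigenvalue of $P$; consequently $0=|1-1|$ occurs among the singular values of $L$, which is consistent with the normalization in \eqref{arrange} placing $|1-\lambda_0|=0$ first. Therefore, under the ordering \eqref{arrange}, the smallest singular value of $L$ is $|1-\lambda_0|=0$ and the second-smallest is $|1-\lambda_1|$; by the definition of $\gamma$ this is also the spectral gap. I do not expect a genuine obstacle here: the argument is essentially immediate once the spectral theorem is applied, and the only point demanding care is to keep all the inner-product bookkeeping consistent with \eqref{ipdef} — that is, to interpret ``$U$ unitary'' as $U^*U=I$ and ``singular values of $L$'' as square roots of eigenvalues of $L^*L$ with $L^*$ the adjoint with respect to \eqref{ipdef}, rather than with respect to the standard Euclidean structure.
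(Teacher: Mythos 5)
Your argument is correct and is exactly the one the paper gives: invoke the spectral theorem for normal operators with respect to the inner product \eqref{ipdef} to write $P=UDU^*$, deduce $L^*L=U(I-D)^*(I-D)U^*$ as in \eqref{lprel}, and read off the singular values $|1-\lambda_i|$ under the ordering \eqref{arrange}. No differences worth noting.
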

We will later see some examples where $P$ is not reversible but normal, and will use Theorem~\ref{revnormthm} to study the relaxation time. 

\subsection{Related works}
Various notions of spectral gap for nonreversible chains have been proposed over the years. One of the earliest definitions is due to Fill~\cite{fill91}, who considered the additive and multiplicative reversibilizations of $P$ with respect to the inner product \eqref{ipdef}, and showed that the spectral gaps of these self-adjoint operators can be used to get upper bounds on the mixing time of the original chain. For example, Fill shows that if $\beta$ is the second largest eigenvalue of $PP^*$, then for any $x$ and $n$,
\begin{align}\label{fillbd}
\|P_x^n - \mu\|_{\textup{TV}} \le \frac{\beta^{n/2}}{2\sqrt{\mu(x)}}.
\end{align}
A different approach, due to Kontoyiannis and Meyn~\cite{kontoyiannismeyn03, kontoyiannismeyn12}, defines spectral gap in terms of the spectrum of $P$. For nonreversible chains, the spectrum $S$ of $P$ is a subset of the unit disk in the complex plane. According to the definition in \cite{kontoyiannismeyn03}, the chain has a spectral gap if  there exists $\ve_0 > 0$ such that $S\cap \{z : |z| \ge  1 - \ve_0\}$ is finite, and contains only poles of finite multiplicity. It is shown in \cite{kontoyiannismeyn03, kontoyiannismeyn12}, with quantitative bounds, that the existence of such a spectral gap implies geometric ergodicity for the corresponding chains.

Yet another definition, due to Paulin~\cite{paulin15}, is that of the pseudo-spectral gap. The pseudo-spectral gap $\gamma_{\textup{ps}}$ is defined as the maximum, over all $k\ge 1$, of the spectral gap of the reversible chain $(P^*)^k P^k$ divided by $k$. It is shown in \cite{paulin15} that the pseudo-spectral gap can be used to get upper bounds on the mixing times of Markov chains, as well as (usually non-matching) lower bounds. For example, Paulin shows that for any $\ve\in (0,1/2)$,
\[
\frac{1-2\ve }{\gamma_{\textup{ps}}} \le \tau_{\textup{mix}}(\ve) \le \frac{1+2|\log(2\ve)| + |\log \mumin|}{\gamma_{\textup{ps}}}.
\]
For applications of the pseudo-spectral gap and further theoretical advances, see \cite{wolferkontorovich19}. Incidentally, it turns out that our spectral gap $\gamma$ is bounded below by  $\frac{1}{2}\gamma_{\textup{ps}}$. This is because, as we will see in the proof of Theorem \ref{mixthm} (specifically, equation \eqref{maintau}), if $\gamma_k$ is the spectral gap of $P^k$ (according to our definition), then $\gamma_k\le k\gamma$. But by Theorem \ref{revthm}, $\gamma_k\ge$ one-half times the spectral gap of $(P^k)^*P^k$. Dividing by $k$ and then taking supremum over $k$ shows that 
\[
\gamma\ge \frac{1}{2}\gamma_{\textup{ps}}.
\]
Techniques for using singular values of transition matrices to obtain bound on rates of convergence of inhomogeneous Markov chains were devised by Saloff-Coste and Z\'{u}\~{n}iga~\cite{saloffcostezuniga07}. The main quantity appearing in these bounds is the product of the second-largest singular values of the transition matrices, generalizing \eqref{fillbd} to inhomogeneous chains. Note that for nonreversible chains, these singular values have no obvious relation to the singular values of the generator that we are using in this paper.

A notion of spectral gap for random walks on directed graphs via the eigenvalues of a symmetrized generator was defined by Chung~\cite{chung05}, who then used this spectral gap to get upper bounds on the mixing times of such walks.

In all of the above, the appearance of $\mumin$ or $\mu(x)$ results in a suboptimal bound  on the mixing time (which is sometimes severely suboptimal). Diaconis and Saloff-Coste~\cite{diaconissaloffcoste96} developed methods for using the singular values of $P$ to get the correct rates of convergence in total variation distance in certain classes of nonreversible problems. 

It is an easy consequence of the decorrelation property that empirical averages converge at the relaxation time for reversible chains. Going beyond $L^2$ bounds, a Chernoff-type concentration inequality was proved by Gillman~\cite{gillman93, gillman98} for empirical averages of reversible chains which depends only on the spectral gap of the chain. This was later improved by Dinwoodie~\cite{dinwoodie95} and Lezaud~\cite{lezaud98}, with the latter paper having an extension to nonreversible chains via the multiplicative reversibilization. See also Joulin and Ollivier~\cite{joulinollivier10} for a different approach and further references. It would be interesting to see whether a similar concentration inequality can be proved in terms of the relaxation time defined in this paper.

There is also a large literature on mixing of nonreversible Markov chains which avoids the question of defining a spectral gap and directly analyzes the chains by other methods. For a sampling of such results in chronological order, see Hildebrand~\cite{hildebrand97}, Chen et al.~\cite{chenetal99}, Diaconis et al.~\cite{diaconisetal00}, Neal~\cite{neal04}, and Chen and Hwang~\cite{chenhwang13}. 

The path argument in the form given here was developed for reversible chains by Diaconis and Stroock~\cite{diaconisstroock91}, with variants developed in Jerrum and Sinclair~\cite{jerrumsinclair89}, Quastel~\cite{quastel92} and Diaconis and Saloff-Coste~\cite{diaconissaloffcoste93}. Different versions of the path argument are also available non-reversible chains~\cite[Section 4]{dyeretal06}. 

In very interesting recent work, Hermon~\cite{hermon23} has shown that the relaxation time of a reversible Markov chain on a finite state space is characterized, up to universal constant multiples, by the worst case expected hitting time of a large set starting from outside that set. Hermon also has a similar characterization of a certain notion of relaxation time of non-reversible chains, which appears to be equivalent to our notion of relaxation time. See~\cite{hermon23} for details. 

In the next section, we will see a number of examples where our relaxation time is small compared to the relaxation times of closely related reversible chains. There is a sizable body of work on speeding up reversible Markov chains by replacing them with related nonreversible chains. For results and pointers to this literature, see Neal~\cite{neal04} and Diaconis and Miclo~\cite{diaconismiclo13}. 

\section{Examples}\label{examplesec}
\subsection{Random walks on the discrete circle}
Consider a random walk on the discrete circle $\zz/N\zz$, which takes a jump of size $a_i$ with probability $p_i$, where $a_1,\ldots,a_k$ are distinct elements of  $\zz/N\zz$ and $p_1,\ldots,p_k$ are positive real  numbers summing to $1$. The relaxation time, according to our definition, can be exactly computed, as shown by the following result.
\begin{thm}\label{formthm}
For the above Markov chain, the relaxation time according to our definition is given by the formula
\[
\tau =\max_{1\le j\le N-1} \biggl|1- \sum_{r=1}^k p_r e^{2\pi \I ja_r/N}\biggr|^{-1}. 
\]
\end{thm}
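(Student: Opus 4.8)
The plan is to exploit the abelian group structure: for this walk $P$ is a circulant matrix, so it is diagonalized by the Fourier characters, which turn out to be orthonormal for the relevant inner product. This makes $P$ normal with respect to that inner product, and then Theorem~\ref{revnormthm} reduces the problem to a computation of eigenvalues.

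First I would note that the uniform measure $\mu(x) = 1/N$ is invariant: since $P(x,y)$ depends only on $y-x$, the matrix $P$ is doubly stochastic, so $\mu P = \mu$; moreover $\mu(x)>0$ for all $x$, so it is a legitimate choice in the general setup of the paper. The inner product \eqref{ipdef} induced by this $\mu$ is $\tfrac1N$ times the standard Hermitian inner product on $\C^{\Z/N\Z}$. For $j\in\{0,1,\dots,N-1\}$ put $\chi_j(x) := e^{2\pi\I j x/N}$. A direct computation gives
\[
(P\chi_j)(x) = \sum_{y} P(x,y)\chi_j(y) = \sum_{r=1}^{k} p_r\,\chi_j(x+a_r) = \Bigl(\sum_{r=1}^{k} p_r e^{2\pi\I j a_r/N}\Bigr)\chi_j(x),
\]
so $\chi_j$ is an eigenvector of $P$ with eigenvalue $\lambda_j := \sum_{r=1}^{k} p_r e^{2\pi\I j a_r/N}$. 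Because of the $\tfrac1N$ normalization, $\langle \chi_j,\chi_l\rangle = \tfrac1N\sum_x e^{2\pi\I(j-l)x/N} = \delta_{jl}$, so $\{\chi_j\}_{j=0}^{N-1}$ is an orthonormal basis of $\C^{\Z/N\Z}$ for $\langle\cdot,\cdot\rangle$. Writing $P = UDU^*$ with $U$ the (unitary) change of basis to this eigenbasis and $D = \mathrm{diag}(\lambda_0,\dots,\lambda_{N-1})$, we get $PP^* = UDD^*U^* = UD^*DU^* = P^*P$, i.e., $P$ is normal.

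Next I would invoke Theorem~\ref{revnormthm}. Since $P$ is normal, the singular values of $L = I-P$ are precisely the numbers $|1-\lambda_j|$, $j=0,\dots,N-1$. The index $j=0$ gives $\lambda_0 = \sum_r p_r = 1$, hence $|1-\lambda_0| = 0$, which is the smallest singular value. Consequently the second-smallest singular value is $\gamma = \min_{1\le j\le N-1}|1-\lambda_j|$, with the understanding that $\gamma = 0$ when $\lambda_j = 1$ for some $j\neq 0$; this degenerate case occurs exactly when the walk fails to be irreducible, and then both $\tau = 1/\gamma$ and the right-hand side of the claimed formula equal $+\infty$. Taking reciprocals yields $\tau = \max_{1\le j\le N-1}|1-\lambda_j|^{-1} = \max_{1\le j\le N-1}\bigl|1 - \sum_{r=1}^k p_r e^{2\pi\I j a_r/N}\bigr|^{-1}$, as asserted.

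There is essentially no serious obstacle here; the only points requiring a little care are the bookkeeping with the weighted inner product (so that ``normal'' and ``unitary'' are understood with respect to $\langle\cdot,\cdot\rangle$ rather than the standard inner product), and the degenerate case in which an eigenvalue other than $\lambda_0$ equals $1$, both of which are handled above.
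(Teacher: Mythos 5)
Your argument is correct and follows essentially the same route as the paper: observe that $P$ is a circulant matrix diagonalized by the Fourier characters (hence normal with respect to the relevant inner product), identify the eigenvalues as $\lambda_j = \sum_r p_r e^{2\pi\I j a_r/N}$, and invoke Theorem~\ref{revnormthm}. You have merely spelled out the details that the paper leaves implicit (the orthonormality of the characters under the $\mu$-weighted inner product, and the degenerate non-irreducible case), which is fine.
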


The proof of Theorem \ref{formthm} is in Section \ref{formproof}. 
For a quick example, consider the case of the simple symmetric random walk, where $k=2$, $a_1=1$, $a_2 = -1 \bmod N$, and $p_1=p_2=1/2$. A simple calculation using the above formula yields
\begin{align*}
\tau &= \max_{1\le j\le N-1} \biggl|1-\frac{1}{2}(e^{2\pi \I j/N} + e^{-2\pi \I j/N})\biggr|^{-1}\\
&= \max_{1\le j\le N-1} \frac{1}{1-\cos(2\pi j/N)} = \frac{1}{1-\cos(2\pi /N)}.
\end{align*}
This shows that the relaxation time is of order $N^2$, which is a well-known result. On the other hand, if we consider the biased random walk where $k=2$, $a_1 =0$, $a_2=1$, and $p_1=p_2=1/2$, then 
\begin{align*}
\tau &= \max_{1\le j\le N-1} \biggl|1-\frac{1}{2}(1+e^{2\pi\I j/N})\biggr|^{-1}\\
&= \max_{1\le j\le N-1}\frac{1}{|\sin (\pi j/N)|} = \frac{1}{|\sin (\pi/N)|}, 
\end{align*}
which grows like a multiple of $N$. It is not hard to show that the mixing time of the biased random walk is of order $N^2$. Thus, for the biased walk, empirical averages converge much before the chain mixes. 

What about typical $a_1,\ldots,a_k$, for a given $k$?  The following result shows that if $N$ is prime, then for any given $p_1,\ldots,p_k$, the value of $\tau$ for typical $a_1,\ldots, a_k$ is at most of order $N^{2/(k+1)}$. 
\begin{thm}\label{randthm}
Let $N$ be prime. Given $k$ and $p_1,\ldots,p_k$, if $a_1,\ldots, a_k$ are drawn independently and uniformly at random from $\zz/N\zz$, then for any $L>0$, $\P(\tau > L N^{2/(k+1)}) \le CL^{-(k+1)/2}$, where $C$ is a positive constant that depends only on $k$ and $p_1,\ldots,p_k$.
\end{thm}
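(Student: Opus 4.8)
The plan is to combine the exact formula of Theorem~\ref{formthm} with a union bound over the frequency $j$, the heart of the argument being a sharp estimate, for each fixed $j$, of the probability that $\bigl|1-\sum_{r=1}^{k}p_r e^{2\pi\I j a_r/N}\bigr|$ is tiny. Set $\delta:=1/(LN^{2/(k+1)})$. By Theorem~\ref{formthm}, $\{\tau>LN^{2/(k+1)}\}$ is exactly the event that $\bigl|1-\sum_{r}p_r e^{2\pi\I j a_r/N}\bigr|<\delta$ for some $j\in\{1,\dots,N-1\}$, so a union bound gives
\[
\P\bigl(\tau>LN^{2/(k+1)}\bigr)\le\sum_{j=1}^{N-1}\P\Bigl(\bigl|1-\textstyle\sum_{r=1}^{k}p_r e^{2\pi\I j a_r/N}\bigr|<\delta\Bigr).
\]
Since $N$ is prime, for each fixed $j\in\{1,\dots,N-1\}$ the map $a\mapsto ja\bmod N$ is a bijection of $\zz/N\zz$, so $(ja_1,\dots,ja_k)\bmod N$ is uniformly distributed on $(\zz/N\zz)^{k}$; hence every summand equals $\P\bigl(|1-\sum_r p_r e^{2\pi\I m_r/N}|<\delta\bigr)$ for a vector $(m_1,\dots,m_k)$ uniform on $(\zz/N\zz)^{k}$. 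It then suffices to establish the \emph{key estimate}: there is $C_0=C_0(k,p_1,\dots,p_k)$ with
\[
\P\Bigl(\bigl|1-\textstyle\sum_{r=1}^{k}p_r e^{2\pi\I m_r/N}\bigr|<\delta\Bigr)\le C_0\,\delta^{(k+1)/2},
\]
because $\delta^{(k+1)/2}=L^{-(k+1)/2}N^{-1}$ collapses the union bound to $(N-1)C_0\delta^{(k+1)/2}\le C_0 L^{-(k+1)/2}$.

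To prove the key estimate, write $z:=1-\sum_r p_r e^{2\pi\I m_r/N}=\sum_r p_r\bigl(1-e^{2\pi\I\tilde m_r/N}\bigr)$, where $\tilde m_r$ is the representative of $m_r$ of least absolute value, so that $\Re z=\sum_r p_r\bigl(1-\cos(2\pi\tilde m_r/N)\bigr)\ge0$ and $\Im z=-\sum_r p_r\sin(2\pi\tilde m_r/N)$. One may assume $\delta$ to be smaller than a constant depending only on $k$ and the $p_r$, since otherwise $C_0\delta^{(k+1)/2}\ge1$ and nothing is to prove. If $|z|<\delta$ then $\Re z<\delta$; as the summands are nonnegative and $1-\cos(2\pi t)\ge 8\|t\|^{2}$ ($\|\cdot\|$ = distance to the nearest integer), this forces $|\tilde m_r|<N\sqrt{\delta/(8p_r)}$ for \emph{every} $r$, confining $(\tilde m_1,\dots,\tilde m_k)$ to a box of $O(N^{k}\delta^{k/2})$ lattice points. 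Since every $\tilde m_r$ is now $O(N\sqrt\delta)$, the expansion $\sin(2\pi\tilde m_r/N)=2\pi\tilde m_r/N+O\bigl((\tilde m_r/N)^{3}\bigr)$ has total error $O(\delta^{3/2})$, so the further condition $|\Im z|<\delta$ becomes a linear constraint $\bigl|\sum_r p_r\tilde m_r\bigr|=O(N\delta)$. Fixing $\tilde m_2,\dots,\tilde m_k$ inside the box ($O((N\sqrt\delta)^{k-1})$ choices), this constraint confines $\tilde m_1$ to an interval of $O(N\delta)$ integers, so the number of admissible vectors is $O\bigl((N\sqrt\delta)^{k-1}\cdot N\delta\bigr)=O(N^{k}\delta^{(k+1)/2})$; dividing by $N^{k}$ gives the estimate.

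Several easy special ranges need to be absorbed. If $\delta$ is so small that the box collapses to $\{0\}$, then $|z|<\delta$ forces $m_r\equiv0$, i.e.\ $a_r\equiv0$, for all $r$, which is impossible for distinct $a_r$ when $k\ge2$; for $k=1$ the walk is the deterministic biased walk with $\tau=1/(2\sin(\pi/N))=\Theta(N)$ and the bound is immediate. Equivalently, whenever $LN^{2/(k+1)}>N^{2}/(8p_{\min})$ one has $\P(\tau>LN^{2/(k+1)})=0$, since for $j\ne0$ some $a_r\not\equiv0$ forces $|1-\sum_r p_r e^{2\pi\I ja_r/N}|\ge\Re(\cdot)\ge p_{\min}\bigl(1-\cos(2\pi/N)\bigr)\ge 8p_{\min}/N^{2}$ and hence $\tau\le N^{2}/(8p_{\min})$. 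Bounded $N$ are absorbed into $C$.

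The hard part is the per-frequency key estimate, and inside it the step from ``$|\Im z|$ small'' to a genuine linear constraint: controlling the $\sin t\approx t$ error is only possible \emph{because} ``$\Re z$ small'' has already localized every $\tilde m_r$ near $0$, and it is precisely this imaginary-part constraint that produces the exponent $(k+1)/2$ instead of the $k/2$ coming from the real part alone---without the extra $\sqrt\delta$ the union bound over the $\asymp N$ frequencies would not close. Care is also needed when the box is thin in some coordinate directions (when $N\sqrt{\delta/p_r}$ is not much larger than $1$): there one separates off the coordinates pinned to $0$ and runs the same count on the rest, using that two or more pinned coordinates is impossible for distinct $a_r$. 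The remaining ingredients---the elementary bounds for $1-\cos$ and $\sin$, the box-intersect-slab lattice count, and the bookkeeping of degenerate $\delta$---are routine.
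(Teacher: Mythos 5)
Your proposal is correct and follows essentially the same route as the paper: after invoking Theorem~\ref{formthm} and a union bound over $j$, both arguments use primality to reduce to a uniform $k$-tuple, localize each coordinate to a $O(N\sqrt\delta)$-box via the real part, linearize $\sin$ on that box to turn the imaginary-part constraint into a linear slab, and combine a $\delta^{(k-1)/2}$ box count with a $\delta$ slab count to get $\delta^{(k+1)/2}$ per frequency. The paper phrases the final count probabilistically (via independence of the $X_{j,r}$) whereas you count lattice points, but these are identical; your extra bookkeeping on degenerate $\delta$ is finer than what the paper records.
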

Theorem \ref{randthm} is proved in Section \ref{randproof} using the formula from Theorem \ref{formthm}. We remark here that the mixing time of the above random walk for typical $a_1,\ldots,a_k$ is of order $N^{2/(k-1)}$, which improves to $N^{2/k}$ if we make the walk lazy. (See the survey of Hildebrand~\cite{hildebrand05} on random random walks for a proof and various other results of this flavor. See also \cite[Theorem D]{hermonolesker-taylor23},  which  considers the relaxation time and the absolute relaxation time of the reversible random walk on the random Cayley graph of an Abelian group $G$, obtained by picking $k$ elements of $G$ independently and uniformly at random. For continuous time random random walks and cutoff phenomena, see \cite{hough17, hermonolesker-taylor21}.) Theorem \ref{randthm} shows that the relaxation time, according to our definition, is typically of order $N^{2/(k+1)}$, which is much smaller than the mixing time. Thus, again, empirical averages converge much before the chain mixes. 

\subsection{Local random walks on tori}\label{torisec}
Take any $d\ge 1$. Let $(p_i)_{-d\le i\le d}$ be a set of nonnegative real numbers that sum to $1$. Consider the random walk on the discrete torus $(\Z/N\Z)^d$ which takes independent steps as $X_{n}=X_{n-1}$ with probability $p_0$ and $X_{n}=X_{n-1} \pm e_i$ with probability $p_{\pm i}$ for $i=1,\ldots,d$, where $e_1,\ldots,e_d$ are the standard basis vectors of $\R^d$. We assume that $p_i+p_{-i}>0$ for each $1\le i\le d$, because otherwise the Markov chain is not irreducible. Under this condition, the uniform distribution $\mu$ on the torus is the unique invariant measure of the walk. It is not hard to show that the mixing time of the chain is of order $N^2$ for any choice of $p_i$'s satisfying the above constraints.

The relaxation time shows more interesting behavior than the mixing time. We have already considered the case $d=1$ in the previous example. In particular, it is easy to deduce from Theorem \ref{formthm} that when $d=1$, $\tau$ is of order $N^2$ whenever the walk is reversible and of order $N$ whenever it is nonreversible. The following theorem shows that the situation is more subtle when~$d\ge 2$. 
\begin{thm}\label{pathex}
Suppose that $p_i+p_{-i}>0$ for all $1\le i\le d$. Then the relaxation time $\tau$  (according to our definition) of the above random walk is bounded above by $C_1N^2$, where $C_1$ depends only on $d$ and the $p_i$'s. Moreover if $d\ge 2$, and $p_i=p_{-i}$ for some $i$ or $p_i-p_{-i}$ is rational for at least two $i$'s, then $\tau \ge C_2N^2$, where $C_2$ is a positive constant that depends only on $d$ and the $p_i$'s.
\end{thm}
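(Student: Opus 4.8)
The plan is to use the translation invariance of the walk. Since $P$ is a convolution operator on the abelian group $(\Z/N\Z)^d$ --- writing $s_0:=0$ and $s_{\pm i}:=\pm e_i$, one has $(Pf)(x)=\sum_{-d\le i\le d}p_i f(x+s_i)$ --- its adjoint with respect to the inner product induced by the uniform measure $\mu$ is $(P^*f)(x)=\sum_i p_i f(x-s_i)$, again a convolution operator; convolution operators on an abelian group commute, so $P^*P=PP^*$ and Theorem \ref{revnormthm} applies. The characters $\chi_j(x)=e^{2\pi\I\, j\cdot x/N}$, $j=(j_1,\dots,j_d)\in(\Z/N\Z)^d$, form a complete set of eigenvectors of $P$, with eigenvalues $\lambda_j=p_0+\sum_{i=1}^d\bigl(p_i e^{2\pi\I j_i/N}+p_{-i}e^{-2\pi\I j_i/N}\bigr)$. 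Setting $q_i:=p_i+p_{-i}>0$, $r_i:=p_i-p_{-i}$, and $\theta_i:=2\pi j_i/N$, and using $p_0=1-\sum_i q_i$, I would first record the identity
\[
1-\lambda_j=\sum_{i=1}^d q_i(1-\cos\theta_i)\;-\;\I\sum_{i=1}^d r_i\sin\theta_i .
\]
Its real part is $\sum_i q_i(1-\cos\theta_i)\ge 0$, and vanishes only when $\cos\theta_i=1$ for every $i$ (since $q_i>0$), i.e.\ only for $j=0$. Hence $|1-\lambda_j|>0$ for $j\ne 0$, and by Theorem \ref{revnormthm}, $\tau=\bigl(\min_{j\ne 0}|1-\lambda_j|\bigr)^{-1}$.

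For the bound $\tau\le C_1N^2$, I would simply use $|1-\lambda_j|\ge\Re(1-\lambda_j)=\sum_i q_i(1-\cos\theta_i)$. Every nonzero $j$ has a coordinate $j_i\in\{1,\dots,N-1\}$, for which $1-\cos\theta_i\ge 1-\cos(2\pi/N)\ge c/N^2$ for a universal $c>0$; since the remaining summands are nonnegative, $|1-\lambda_j|\ge c(\min_i q_i)/N^2$, which gives $\tau\le C_1N^2$ with $C_1$ depending only on $d$ and the $p_i$'s (through $\min_i(p_i+p_{-i})$).

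For $\tau\ge C_2N^2$ when $d\ge 2$, I would exhibit a nonzero $j$ with $|1-\lambda_j|=O(1/N^2)$. The real part above is at least a constant times $\sum_i q_i\theta_i^2$, so any such $j$ must have bounded nonzero coordinates; the imaginary part is then essentially $\frac{2\pi}{N}\sum_i r_i j_i$, and is $O(1/N^2)$ only when $\sum_i r_i j_i=0$ exactly. If $p_i=p_{-i}$ for some $i$ (so $r_i=0$), take $j=e_i$: the imaginary part vanishes identically, $|1-\lambda_j|=q_i(1-\cos(2\pi/N))\le 2\pi^2/N^2$, so $\tau\ge N^2/(2\pi^2)$. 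If instead $r_i\ne 0$ for all $i$ but $r_a=s_a/m$ and $r_b=s_b/m$ are rational for two distinct indices --- say $a=1$, $b=2$, with $m\ge 1$ and nonzero integers $s_1,s_2$ --- take, for $N>\max(|s_1|,|s_2|)$, the vector $j$ with $j_1\equiv s_2$, $j_2\equiv -s_1\pmod{N}$ and all other coordinates $0$ (nonzero for such $N$). Then $r_1\sin\theta_1+r_2\sin\theta_2=\frac{2\pi}{N}(r_1 s_2-r_2 s_1)+O(1/N^3)=O(1/N^3)$ since $r_1 s_2=r_2 s_1$, while the real part is $\le\frac12(q_1\theta_1^2+q_2\theta_2^2)=O(1/N^2)$, with constants depending only on $d$ and the $p_i$'s; hence $|1-\lambda_j|=O(1/N^2)$ and $\tau\ge C_2N^2$ for large $N$, the finitely many small $N$ being absorbed into $C_2$ via $\tau\ge 1$.

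The normality bookkeeping, the elementary inequalities for $1-\cos$, and the Taylor estimates are routine. The one place demanding real thought is the second case of the lower bound: one must use the rationality of two of the drifts $r_i$ to find bounded integer frequencies $(j_1,j_2)$ that annihilate the leading-order imaginary part of $1-\lambda_j$ while keeping its real part of order $N^{-2}$. When the drifts are instead generic no such bounded frequency exists --- a diophantine obstruction keeps the imaginary part of size at least of order $1/N$ --- and this is precisely the mechanism behind the smaller relaxation times, such as the $N^{4/3}$ example previewed in the introduction.
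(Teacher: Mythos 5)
Your proof is correct, and the lower-bound half is essentially the argument in the paper --- your choices $j=e_i$ (when $r_i=0$) and $j=(s_2,-s_1,0,\dots,0)$ (when $r_1=s_1/m$, $r_2=s_2/m$) are exactly the integer vectors $(m_1,\dots,m_d)$ annihilating $\sum_i m_i(p_i-p_{-i})$ that the paper plugs into the test function $f(x)=\exp(\frac{2\pi\I}{N}\sum_i m_i x_i)$; since $f$ is an eigenvector of $P$, the paper's estimate $\|(I-P)f\|/\|f\|=O(N^{-2})$ is the same quantity as your $|1-\lambda_j|=O(N^{-2})$, just packaged as a Rayleigh-quotient upper bound on $\gamma$ rather than via Theorem \ref{revnormthm}. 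Where you genuinely diverge is the upper bound $\tau\le C_1 N^2$: the paper proves it by the directed path argument of Theorem \ref{paththm}, constructing paths of length at most $dN$ along the available directions and bounding the congestion constant $B\le CN^2$; you instead push the spectral computation all the way, observing that $P$ is a normal convolution operator so $\gamma=\min_{j\ne 0}|1-\lambda_j|$, and then dropping to the real part $\sum_i q_i(1-\cos\theta_i)\ge c\,(\min_i q_i)/N^2$. Your route is more unified and self-contained for this translation-invariant example, and makes the mechanism behind the later theorems (\ref{genpithm}--\ref{explicitthm2}) transparent; the paper's path-argument proof serves a pedagogical purpose --- it is the worked illustration of Theorem \ref{paththm} announced in Section \ref{pathsec} --- and, not depending on normality, it would survive a perturbation of $P$ that broke the convolution structure. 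One small piece of housekeeping you glossed over: you should check, as the paper implicitly does, that $j$ constructed in the rational case is nonzero modulo $N$ for all $N$ exceeding a threshold depending only on $s_1,s_2$; you do say this, so the argument is complete.
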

The proof of this theorem is in Section \ref{pathexproof}. The upper bound is proved via the path argument for nonreversible chains stated in Section \ref{pathsec}. The lower bound is proved by exhibiting a function $f$ with $\mu f = 0$ and $\|(I-P)f\|\le CN^{-2}\|f\|$. Note that if $P$ is reversible, then $p_i=p_{-i}$ for all $i$, and hence $\tau$ is at least of order $N^2$ by the above result. 

Is it possible to decrease $\tau$ in $d\ge 2$ with irrational transition probabilities, and if so, by how much? The following theorem shows that $\tau$ is at least of order $N^{2d/(d+1)}$ for any choice of $p_i$'s, and this lower bound is approximately achieved for almost all choices of $p_i$'s. 
\begin{thm}\label{genpithm}
Suppose that $p_i+p_{-i}>0$ for all $1\le i\le d$. Then $\tau \ge C_1 N^{2d/(d+1)}$, where $C_1$ is a positive constant depends only on $d$ and the $p_i$'s. Moreover, for almost every $(p_i)_{-d\le i\le d}$ (with respect to Lebesgue measure on the simplex), we have $\tau = N^{2d/(d+1) + o(1)}$ as $N\to\infty$. 
\end{thm}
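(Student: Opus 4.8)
The plan is to diagonalize the walk by Fourier analysis and thereby reduce everything to Diophantine approximation of a single integer linear form. Since $P$ is a convolution operator on $(\Z/N\Z)^d$, it is normal with respect to the inner product induced by the uniform measure $\mu$, with eigenvalues $\lambda_j = p_0 + \sum_{i=1}^d(p_i e^{2\pi\I j_i/N} + p_{-i}e^{-2\pi\I j_i/N})$ for $j=(j_1,\dots,j_d)\in(\Z/N\Z)^d$; hence by Theorem~\ref{revnormthm}, $\tau = (\min_{j\ne0}|1-\lambda_j|)^{-1}$. Taking $j_i\in(-N/2,N/2]$ and using $\sum_i(p_i+p_{-i})=1-p_0$, a short computation gives $\Re(1-\lambda_j) = 2\sum_i(p_i+p_{-i})\sin^2(\pi j_i/N)$ and $\operatorname{Im}(1-\lambda_j) = \sum_i(p_{-i}-p_i)\sin(2\pi j_i/N)$. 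Since every $p_i+p_{-i}>0$, the first quantity is comparable to $\|j\|^2/N^2$ when $\|j\|_\infty\le N/3$ and is bounded below by a positive constant otherwise; and Taylor-expanding the sine gives $\operatorname{Im}(1-\lambda_j) = \la b,j\ra/N + O(\|j\|^3/N^3)$ with $b := 2\pi(p_{-1}-p_1,\dots,p_{-d}-p_d)$. Thus, with constants depending only on $d$ and the $p_i$'s,
\[
|1-\lambda_j|\;\asymp\;\frac{\|j\|^2}{N^2}+\frac{|\la b,j\ra|}{N}\qquad\text{whenever }0<\|j\|_\infty\le N/3 .
\]

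For the lower bound $\tau\ge C_1N^{2d/(d+1)}$, which must hold for every admissible choice of the $p_i$'s, apply the pigeonhole principle to the linear form $j\mapsto\la b,j\ra$ on the box $\{0,1,\dots,M\}^d$ with $M:=\lceil N^{1/(d+1)}\rceil$: this produces a nonzero $j$ with $\|j\|_\infty\le 2M$ and $|\la b,j\ra|\le C(b)M^{-(d-1)}$. Feeding this into the displayed estimate gives $|1-\lambda_j|\lesssim M^2/N^2 + M^{-(d-1)}/N$, and this choice of $M$ balances the two terms to yield $|1-\lambda_j|\lesssim N^{-2d/(d+1)}$, hence $\tau\gtrsim N^{2d/(d+1)}$. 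The argument is uniform in $b$, so the reversible case and the cases where $p_i=p_{-i}$ for some $i$ need no special treatment (there one in fact gets the stronger $\tau\gtrsim N^2$ by taking $j=e_i$).

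For the almost-everywhere upper bound, the crux is a metric statement: for each fixed $\epsilon>0$, Lebesgue-almost every $b$ satisfies $|\la b,j\ra|\ge c(b,\epsilon)\,\|j\|_\infty^{-(d-1)-\epsilon}$ for all $j\in\Z^d\setminus\{0\}$. This is a Borel–Cantelli argument: for each $j$ the set of $b$ in a fixed cube with $|\la b,j\ra|<\|j\|_\infty^{-(d-1)-\epsilon}$ has measure $\lesssim\|j\|_\infty^{-d-\epsilon}$ (integrate over the coordinate $b_m$ with $|j_m|=\|j\|_\infty$), and $\sum_{j\ne0}\|j\|_\infty^{-d-\epsilon}<\infty$; this, together with the a.e.\ fact that $\la b,j\ra\ne0$ for every $j\ne0$ (which disposes of the finitely many remaining $j$), gives the claim. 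Fix such a $b$, and bound $\min_{j\ne0}|1-\lambda_j|$ by splitting on whether $\|j\|_\infty\ge N^{1/(d+1)}$: in that range the real-part term gives $|1-\lambda_j|\gtrsim N^{-2d/(d+1)}$ (using the constant lower bound when $\|j\|_\infty>N/3$), while for $0<\|j\|_\infty<N^{1/(d+1)}$ one has $|1-\lambda_j|\ge|\operatorname{Im}(1-\lambda_j)|\ge|\la b,j\ra|/N - C\|j\|^3/N^3\gtrsim c(b,\epsilon)N^{-2d/(d+1)-\epsilon/(d+1)}$, the cubic error being of strictly smaller order. Hence $\tau\le c(b,\epsilon)^{-1}N^{2d/(d+1)+\epsilon/(d+1)}$.

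It remains to transfer the exceptional set from the $b$-variable to the simplex: the map $(p_i)_{-d\le i\le d}\mapsto b$ is a surjective linear map, so it pushes Lebesgue measure on the simplex to an absolutely continuous measure, and therefore the set of $(p_i)$ for which $b$ fails the metric statement for some $\epsilon=1/m$ ($m\in\N$), or has $\la b,j\ra=0$ for some $j\ne0$, is Lebesgue-null. For every other $(p_i)$, the previous bound with $\epsilon\to0$ gives $\tau\le N^{2d/(d+1)+o(1)}$, which combined with the lower bound yields $\tau = N^{2d/(d+1)+o(1)}$. The main obstacle is lining up the exponents: one must recognize that the generic Diophantine exponent for a single integer linear form in $d$ variables is $d-1$, and check that balancing $M^{-(d-1)}/N$ against $M^2/N^2$ returns exactly the threshold $N^{2d/(d+1)}$; the remaining ingredients — normality and Fourier diagonalization of $P$, the elementary sine estimates, the pigeonhole step, and the pushforward argument — are routine.
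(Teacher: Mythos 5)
Your proposal is correct and follows essentially the same route as the paper: you reduce via normality and Theorem~\ref{revnormthm} to minimizing $|1-\lambda_j|$, you use pigeonhole on the linear form $\langle b,j\rangle$ balanced against the quadratic real-part term at the threshold $\|j\|_\infty\sim N^{1/(d+1)}$ to get the lower bound on $\tau$, and you use a Borel--Cantelli metric-Diophantine bound together with the same split on $\|j\|_\infty$ to get the a.e.\ upper bound. The only cosmetic differences are that the paper phrases the $\tau$ lower bound as a direct construction of a test function $f$ (rather than invoking the eigenvalue formula both ways) and uses a logarithmic safety factor $\psi(x)=x^{-(d-1)}(\log(1+x))^{-2}$ in place of your $x^{-(d-1)-\epsilon}$; neither difference affects the result.
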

Theorem \ref{genpithm} is prove in Section \ref{genpiproof}. The proof involves some techniques from Diophantine approximation. The lower bound is obtained by exhibiting a function $f$ such that $\mu f=0$ and $\|(I-P)f\|\le CN^{-2d/(d+1)} \|f\|$, and the upper bound uses the spectral decomposition of $P$ and a probabilistic argument via the Borel--Cantelli lemma.

Theorem \ref{genpithm} says that for almost all choices of $p_i$'s, the relaxation time is roughly of the same order as its best possible value. One might wonder if a specific, explicit value of $(p_i)_{-d\le i\le d}$ can be shown to have this property. By Theorem \ref{pathex}, rational values will not do. The following result gives an explicit choice when $d=2$.
\begin{thm}\label{explicitthm}
Let $\alpha\in (0,1)$ be an irrational algebraic number of degree two, such as $1/\sqrt{2}$. Consider the random walk on the two-dimensional torus $(\Z/N\Z)^2$ which moves either one step up with probability $1-\alpha$, or one step to the right with probability $\alpha$. The relaxation time of this walk (according to our definition) satisfies $C_1N^{4/3}\le \tau \le C_2 N^{4/3}$, where $C_1$ and $C_2$ are positive constants that depend only on $\alpha$.
\end{thm}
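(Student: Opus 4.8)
The plan is to diagonalize $P$ explicitly and read off $\tau$ from Theorem~\ref{revnormthm}. The transition operator $(Pf)(x)=\alpha f(x+e_1)+(1-\alpha)f(x+e_2)$ is a convolution operator on the abelian group $(\Z/N\Z)^2$, and its adjoint with respect to the uniform measure $\mu$ (the unique stationary distribution, by irreducibility) is $(P^*f)(x)=\alpha f(x-e_1)+(1-\alpha)f(x-e_2)$, again a convolution operator; convolution operators commute, so $P$ is normal. Its eigenfunctions are the characters $\chi_{j,k}(x)=e^{2\pi\I(jx_1+kx_2)/N}$, $0\le j,k\le N-1$, with eigenvalues $\lambda_{j,k}=\alpha e^{2\pi\I j/N}+(1-\alpha)e^{2\pi\I k/N}$, and strict convexity of the unit disk shows that $\lambda_{j,k}=1$ only for $(j,k)=(0,0)$. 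Theorem~\ref{revnormthm} then gives $\tau=1/\gamma$ with $\gamma=\min_{(j,k)\neq(0,0)}|1-\lambda_{j,k}|$, so the whole problem reduces to estimating this minimum. Taking $j,k$ in $(-N/2,N/2]$ and writing $\theta=2\pi j/N$, $\phi=2\pi k/N$, I would split
\[
1-\lambda_{j,k}=\bigl[\alpha(1-\cos\theta)+(1-\alpha)(1-\cos\phi)\bigr]-\I\bigl[\alpha\sin\theta+(1-\alpha)\sin\phi\bigr].
\]
The real part is a sum of two nonnegative terms, comparable to $\alpha(j/N)^2+(1-\alpha)(k/N)^2$ since $1-\cos t\asymp t^2$ on $[-\pi,\pi]$; the imaginary part equals $\tfrac{2\pi}{N}\bigl(\alpha j+(1-\alpha)k\bigr)$ up to a cubic error $O\bigl((|j|^3+|k|^3)/N^3\bigr)$, using the global bound $|\sin t-t|\le|t|^3/6$. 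The key algebraic point is that, with $m:=j-k$, one has $\alpha j+(1-\alpha)k=\alpha m+k$, so the imaginary part is governed by $\dist(\alpha m,\Z)$.

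\textbf{Upper bound ($\tau\le C_2N^{4/3}$).} Let $(j,k)\neq(0,0)$ achieve the minimum and put $\delta=\gamma=|1-\lambda_{j,k}|$; I may assume $\delta$ is below a threshold depending only on $\alpha$, since otherwise $\tau=1/\delta$ is already $O(1)$. Because the real part is $\le\delta$ with two nonnegative summands, each summand is $\le\delta$, which forces $|j|,|k|\le C_\alpha N\sqrt\delta$. Feeding this size bound into the cubic error and using $|\mathrm{Im}(1-\lambda_{j,k})|\le\delta$ gives $|\alpha j+(1-\alpha)k|\le C_\alpha'N\delta$. Now split on $m=j-k$: if $m=0$ then $j=k\neq0$, so $|\alpha j+(1-\alpha)k|=|k|\ge1$ and hence $\delta\ge\pi/N$; if $m\neq0$, then, since $\alpha$ is algebraic of degree two, Liouville's theorem gives $\dist(\alpha m,\Z)\ge c_\alpha/|m|\ge c_\alpha/(2C_\alpha N\sqrt\delta)$, and combining with $\dist(\alpha m,\Z)\le|\alpha m+k|\le C_\alpha'N\delta$ yields $N^2\delta^{3/2}\ge c$ for some $c=c(\alpha)>0$, i.e.\ $\delta\ge c^{2/3}N^{-4/3}$. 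Either way $\gamma$ is bounded below by a positive constant (depending only on $\alpha$) times $N^{-4/3}$ for $N$ large, so $\tau\le C_2N^{4/3}$.

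\textbf{Lower bound ($\tau\ge C_1N^{4/3}$).} Here I exhibit a near-minimizing mode. By Dirichlet's approximation theorem with $Q=\lfloor N^{1/3}\rfloor$ there is an integer $m$ with $1\le m\le N^{1/3}$ and $\dist(\alpha m,\Z)\le N^{-1/3}$. Let $k$ be the integer nearest $-\alpha m$ and $j=m+k$; then $(j,k)\neq(0,0)$ (because $m\neq0$), $|j|,|k|\le C_\alpha N^{1/3}$, and $|\alpha j+(1-\alpha)k|=\dist(\alpha m,\Z)\le N^{-1/3}$. For this mode the real part of $1-\lambda_{j,k}$ is $O_\alpha(N^{-4/3})$ (from $|j|,|k|\le C_\alpha N^{1/3}$), and the imaginary part is $\tfrac{2\pi}{N}\cdot O(N^{-1/3})$ plus a cubic error $O_\alpha(N^{-2})$, hence $O_\alpha(N^{-4/3})$; therefore $\gamma\le|1-\lambda_{j,k}|\le C_\alpha N^{-4/3}$, which is to say $\tau\ge C_1N^{4/3}$.

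The main difficulty is bookkeeping rather than conceptual. One has to keep the linearization $\sin t=t+O(t^3)$ honest, which works precisely because the real-part constraint pins $|j|,|k|$ down to scale $N^{1/3}$ and so makes the cubic error $O(N^{-2})$, comfortably below the target scale $N^{-4/3}$; but the $\alpha$-dependent constants must be carried through in both directions. The one spot where the degree-two hypothesis is genuinely needed is the Liouville bound $\dist(\alpha m,\Z)\ge c_\alpha/|m|$ used in the upper bound on $\tau$: Dirichlet's theorem alone, valid for every irrational $\alpha$, already delivers $\tau$ of order at least $N^{4/3}$ (consistent with the general lower bound of Theorem~\ref{genpithm} for $d=2$), and it is exactly the bounded partial quotients of a quadratic irrational that prevent $\tau$ from exceeding $N^{4/3}$.
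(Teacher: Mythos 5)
Your proof is correct and follows essentially the same route as the paper: diagonalize $P$ via characters (Theorem~\ref{revnormthm}), split $1-\lambda_{j,k}$ into real and imaginary parts to reduce $\gamma$ to a minimization involving $|\alpha j+(1-\alpha)k|/N$ and $(j^2+k^2)/N^2$, then apply a Liouville-type bound for the quadratic irrational $\alpha$ (which the paper rederives from scratch via $|q^2b-p^2|\ge 1$, while you cite Liouville's theorem) and a Dirichlet/pigeonhole step for the matching lower bound on $\tau$ (which the paper instead obtains by citing Theorem~\ref{genpithm} and its Lemma~\ref{diolmm}). The two write-ups differ only in organization --- you carry out the real/imaginary-part balancing directly rather than through the intermediate inequality~\eqref{rweig}, and you handle the $j=k$ case explicitly --- but the underlying argument is the same.
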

Theorem \ref{explicitthm} is proved in Section \ref{explicitproof}. The proof involves some elementary techniques from Diophantine approximation. For an interesting result that has some similarity with Theorem \ref{explicitthm}, in that the transition probability being irrational helps speed up the chain, see \cite[Theorem 1.5]{boczkowskietal18}. 

What if $\alpha$ is an irrational algebraic number of degree greater than two? It turns out that the relaxation time is still nearly optimal, as shown by the following theorem.
\begin{thm}\label{explicitthm2}
Let $\alpha\in (0,1)$ be any irrational algebraic number. Consider the random walk on the two-dimensional torus $(\Z/N\Z)^2$ which moves either one step up with probability $1-\alpha$, or one step to the right with probability $\alpha$. Given any $\ve > 0$, the relaxation time of this walk (according to our definition) satisfies $C_1N^{4/3}\le \tau \le C_2(\ve) N^{4/3+\ve}$, where $C_1$ is a  positive constant that depends only on $\alpha$, and $C_2$ is a positive constant that depends only on $\alpha$ and $\ve$.
\end{thm}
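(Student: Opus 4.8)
The strategy mirrors the proof of Theorem~\ref{explicitthm}: diagonalize $P$ by Fourier characters, reduce to a Diophantine question about how well $\alpha$ is approximated by rationals, and then feed in Roth's theorem in place of the cruder estimate available for quadratic irrationals.

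The lower bound $\tau\ge C_1N^{4/3}$ requires nothing new: it is Theorem~\ref{genpithm} with $d=2$, since for this walk $p_1=\alpha$, $p_2=1-\alpha$ and all other $p_i$ vanish, so the hypothesis $p_i+p_{-i}>0$ holds.

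For the upper bound, I would first observe that $P$ is a convolution operator on the abelian group $(\Z/N\Z)^2$, hence normal with respect to the uniform-measure inner product and diagonalized by the characters $\chi_{j,\ell}(x,y)=e^{2\pi\I(jx+\ell y)/N}$, with eigenvalues $\lambda_{j,\ell}=\alpha e^{2\pi\I j/N}+(1-\alpha)e^{2\pi\I\ell/N}$. By Theorem~\ref{revnormthm}, $1/\tau=\gamma=\min_{(j,\ell)\neq(0,0)}|1-\lambda_{j,\ell}|$, so it suffices to show $|1-\lambda_{j,\ell}|\ge c(\ve)N^{-4/3-\ve}$ for every $(j,\ell)\neq(0,0)$. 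Writing $1-\lambda_{j,\ell}=\alpha(1-e^{2\pi\I j/N})+(1-\alpha)(1-e^{2\pi\I\ell/N})$ and representing $j,\ell$ by integers in $(-N/2,N/2]$, I would split into three regimes. First, if $\max(|j|,|\ell|)\ge N^{1/3}$, then $\Re(1-\lambda_{j,\ell})=\alpha(1-\cos(2\pi j/N))+(1-\alpha)(1-\cos(2\pi\ell/N))$ is $\gtrsim N^{-4/3}$, using $1-\cos(2\pi t/N)\ge 8t^2/N^2$ for $|t|\le N/2$. Second, if $j=\ell\neq 0$, then $\lambda_{j,j}=e^{2\pi\I j/N}$, so $|1-\lambda_{j,j}|=2|\sin(\pi j/N)|\ge 2/N$. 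Third, if $|j|,|\ell|<N^{1/3}$ and $q:=j-\ell\neq 0$, then a Taylor expansion gives $|\operatorname{Im}(1-\lambda_{j,\ell})|\ge \frac{2\pi}{N}|\alpha j+(1-\alpha)\ell|-O(N^{-2})$, and since $\alpha j+(1-\alpha)\ell=\alpha q+\ell$ with $\ell\in\Z$, one has $|\alpha j+(1-\alpha)\ell|\ge\|\alpha q\|$, the distance from $\alpha q$ to the nearest integer. Here I would invoke Roth's theorem: for every $\ve>0$ there is a constant $c_1(\alpha,\ve)>0$ with $\|\alpha q\|\ge c_1|q|^{-1-\ve}$ for every nonzero integer $q$; since $|q|<2N^{1/3}$ this forces $|\alpha j+(1-\alpha)\ell|\gtrsim N^{-1/3-\ve/3}$, whence $|\operatorname{Im}(1-\lambda_{j,\ell})|\gtrsim N^{-4/3-\ve/3}$ for all large $N$ (the $O(N^{-2})$ term being negligible, since we may assume $\ve<2$). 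Combining the three regimes proves the bound for large $N$, and the finitely many small $N$ are absorbed into the constant via the trivial estimate $\tau\lesssim N^2$.

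The cutoff $N^{1/3}$ is dictated by balancing the real-part contribution, of order $q^4/N^4$ when $|j|,|\ell|\asymp|q|$, against the imaginary-part contribution $\asymp\|\alpha q\|^2/N^2\approx q^{-2}/N^2$ along good rational approximations; both are of order $N^{-8/3}$ precisely when $|q|\asymp N^{1/3}$. The one genuine ingredient beyond this bookkeeping is the Diophantine input, and this is where the degree of $\alpha$ enters: for quadratic $\alpha$ the bounded partial quotients of its continued fraction (equivalently, Liouville's inequality) give the sharp exponent and sufficed for Theorem~\ref{explicitthm}, whereas for algebraic $\alpha$ of arbitrary degree one must use Roth's theorem, which accounts for the loss of $N^{\ve}$. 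The main point to get right---and essentially the only subtlety---is that Roth's bound is applied to the nearest-integer distance $\|\alpha q\|$ (a routine reduction absorbing any common factor of numerator and denominator) and uniformly over all $q$ with $|q|\lesssim N^{1/3}$, not merely along the convergents of $\alpha$.
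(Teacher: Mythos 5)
Your argument is correct and is essentially the paper's own proof: Fourier-diagonalize $P$ via Theorem~\ref{revnormthm}, Taylor-expand $1-\lambda_{j,\ell}$ into a linear (imaginary) part and a quadratic (real) part, and lower-bound the linear part using Roth's theorem, with the quadratic part controlling the high-frequency modes. The only cosmetic differences are that you balance the two contributions by splitting explicitly at $|j|,|\ell|\asymp N^{1/3}$ (plus a separate easy case $j=\ell$), whereas the paper reduces via the inequality \eqref{rweig} and then combines the two terms with Young's inequality, and that you phrase the Diophantine input as $\|\alpha(j-\ell)\|\gtrsim |j-\ell|^{-1-\ve}$ while the paper writes it as $|\alpha - p/q|\gtrsim |q|^{-2-\ve}$; these are the same estimate.
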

The proof of Theorem \ref{explicitthm2} --- presented in Section \ref{explicit2proof} --- uses Roth's theorem about rational approximations of algebraic numbers~\cite{roth55}. I thank Noga Alon for pointing out to me that combining Roth's theorem with the proof technique for Theorem \ref{explicitthm} yields Theorem~\ref{explicitthm2}. 


\subsection{The Chung--Diaconis--Graham chain}
The Chung--Diaconis--Graham chain~\cite{chungetal87} is a famous nonreversible Markov chain on $\Z/N\Z$, which proceeds as:
\[
X_n = 2X_{n-1} + \ve_n \bmod N,
\]
where $\ve_n$ are i.i.d.~random variables that are uniformly distributed in $\{-1,0,1\}$. It was shown in \cite{chungetal87} that the mixing time of this chain is at most of order $\log N \log \log N$. Moreover, it was shown in \cite{chungetal87} that for almost all odd $N$, the mixing time is of order $\log N$, and for an exceptional set of $N$, it is of order $\log N \log \log N$. It has been recently proved by Eberhad and Varj\'{u}~\cite{eberhardvarju21} that for almost all odd $N$, this chain exhibits the cutoff phenomenon in total variation distance, and the exact multiple of $\log N$ where the cutoff happens was also identified in the same paper. Here, we show that the relaxation time of this chain, according to our definition, is of order $\log N$ if $N$ is a prime. Thus, empirical averages are close to their limiting values at time $n$ if and only if $n\gg \log N$.
\begin{thm}\label{cdgthm}
There are positive constants $C_1$ and $C_2$ such that for any prime $N\ge 3$, the relaxation time of the Chung--Diaconis--Graham chain is bounded between $C_1\log N$ and $C_2\log N$. 
\end{thm}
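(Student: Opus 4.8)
The plan is to exploit the fact that the Chung--Diaconis--Graham chain is diagonalized by additive characters, so that the transition matrix $P$, while not reversible, is normal with respect to the uniform measure $\mu$ (it is a convolution-type operator composed with the doubling map, and on $\Z/N\Z$ with $N$ prime the doubling map permutes the nontrivial characters). One first checks normality: writing $Pf(x) = \frac13\sum_{\ve\in\{-1,0,1\}} f(2x+\ve)$, one sees that in the Fourier basis $e_j(x) = e^{2\pi\I jx/N}$, we have $Pe_j = \phi(j)\,e_{j'}$ where $j' \equiv 2^{-1}j \pmod N$ (or $2j$, depending on convention) and $\phi(j) = \frac13(1 + 2\cos(2\pi j/N))$. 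Because $j\mapsto 2^{-1}j$ is a permutation of $(\Z/N\Z)^*$ (here primality of $N$ is used, so $2$ is invertible mod $N$ and the only fixed character is the trivial one), the operator $P$ acts as a weighted permutation on an orthonormal basis, hence $P^*P$ and $PP^*$ are both diagonal in this basis with the same entries; thus $P$ is normal and Theorem \ref{revnormthm} applies. So $\tau = 1/\gamma$ where $\gamma = \min_{\lambda \ne 1} |1-\lambda|$ over eigenvalues $\lambda$ of $P$.

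Next I would identify the eigenvalues of $P$. Since $P$ is a weighted permutation of the characters $e_1,\dots,e_{N-1}$, its restriction to the orthogonal complement of constants decomposes according to the cycles of the permutation $j\mapsto 2^{-1}j$ on $(\Z/N\Z)^*$: on a cycle $(j_1,\dots,j_m)$ of length $m$ the relevant block has eigenvalues $\omega\,\bigl(\prod_{r=1}^m \phi(j_r)\bigr)^{1/m}$ as $\omega$ ranges over $m$-th roots of unity. Hence the eigenvalues of $P$ (other than $1$) are exactly the numbers $\zeta \cdot \bigl(\prod_{r} \phi(j_r)\bigr)^{1/m}$ for each $2$-cycle of length $m$ and each $m$-th root of unity $\zeta$. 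The key point for the upper bound on $\tau$ is that for at least one cycle the product of the $\phi$'s is not too small, so some eigenvalue has modulus bounded away from $1$ by a constant times $1/\log N$; and for the lower bound, every eigenvalue $\lambda$ must satisfy $|1-\lambda| \le C/\log N$ is \emph{false}, i.e.\ we need $|1-\lambda|$ cannot be \emph{too} small — equivalently $|\lambda|$ is bounded away from $1$ by at least $c/\log N$ for every nontrivial $\lambda$, which would give $\tau \le C_2\log N$, while some $\lambda$ comes within $C_1^{-1}/\log N$ of $1$, giving $\tau \ge C_1\log N$.

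To control $\prod_{r=1}^m \phi(j_r)$ along a cycle, note that $\log|\phi(j)| = \log|\frac13(1+2\cos(2\pi j/N))|$, and along the cycle the indices are $j, 2^{-1}j, 2^{-2}j,\dots$; summing $\log|\phi|$ along the orbit of the doubling map is where the genuine content lies. For the \emph{lower} bound $\tau\ge C_1\log N$ one wants a character $j$ whose doubling-orbit stays near $0$ (so all $\phi(j_r)\approx 1$) for $\asymp \log N$ steps — e.g.\ take $j\approx N/2^{k}$ with $k\asymp\log N$, so that $j, 2j, 4j,\dots$ remain $O(1)$ in residue for about $k$ steps and $\phi\approx 1$ throughout; this forces the geometric-mean $\phi$ over that cycle to be $\ge 1 - C/\log N$, producing an eigenvalue within $C/\log N$ of $1$ (using also that some $m$-th root of unity $\zeta$ with the product can be taken $=1$ when the product is positive, or handled by choosing $\zeta$ near $1$). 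For the \emph{upper} bound $\tau \le C_2\log N$ one must show \emph{every} cycle has geometric-mean $|\phi| \le 1 - c/\log N$: since $|\phi(j)| \le 1$ always, with equality only at $j=0$, and since $\phi(j)$ is substantially less than $1$ unless $j$ is within $O(N/\log N)$ of $0$ — more precisely $|\phi(j)| \le 1 - c (j/N)^2$ for $|j|\le N/2$ — one needs: along any doubling-orbit of length $m$ through $(\Z/N\Z)^*$, the sum $\sum_r (j_r/N)^2$ is $\gtrsim m/\log N$. This is essentially the statement that a doubling orbit cannot remain within distance $o(N/\log N)$ of $0$ for its entire length $m$ — which follows because if $|j_r| \le \delta N$ for all $r$ on a cycle of length $m$, then the cycle, under doubling, would map a set of size $\asymp \delta N$ into itself, but doubling on $\Z/N\Z$ (for prime $N$) has all nontrivial cycles of length $\ge \log_2 N$ and, more to the point, an orbit confined to an interval of length $\delta N$ around $0$ has length $\le \log_2(1/\delta) + O(1)$; taking $\delta \asymp 1/\log N$ this caps such confined pieces at $O(\log\log N)$ steps, and a careful bookkeeping over the whole orbit gives $\sum_r (j_r/N)^2 \gtrsim m/\log N$.

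The main obstacle is this last arithmetic-combinatorial input: quantitatively controlling how fast the doubling orbit of a character must escape a neighborhood of $0$, and turning that into the bound $\prod_r |\phi(j_r)|^{1/m} \le 1 - c/\log N$ uniformly over all cycles. For $N$ prime this should follow from the pigeonhole fact that doubling expands lengths by a factor $2$ until wraparound, so an orbit staying in $[-\delta N,\delta N]$ has length $O(\log(1/\delta))$; the bookkeeping to chain together the ``near $0$'' and ``far from $0$'' stretches of a single long cycle and still extract a $1/\log N$ deficit in the geometric mean is the delicate part. I would also need to handle the root-of-unity factor $\zeta$ carefully: $|1-\zeta\rho|$ for $\rho = (\prod\phi)^{1/m}\in(0,1]$ is minimized over $m$-th roots $\zeta$ at $\zeta=1$, giving $1-\rho$, so the eigenvalue closest to $1$ in each block is $\rho$ itself, which is exactly what the two bounds above control.
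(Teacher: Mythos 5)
Your proposal rests on the claim that $P$ is normal with respect to $\mu$, and that is false, so Theorem \ref{revnormthm} does not apply. In the Fourier basis $e_j(x) = e^{2\pi\I jx/N}$ one does have $Pe_j = \psi(j)\,e_{2j}$ with $\psi(j) = \tfrac{1}{3}(1+2\cos(2\pi j/N))$, so $P$ is a weighted permutation. But then $P^* e_j = \psi(2^{-1}j)\, e_{2^{-1}j}$, and hence
\[
P^*P\, e_j = \psi(j)^2\, e_j, \qquad PP^*\, e_j = \psi(2^{-1}j)^2\, e_j.
\]
Both $P^*P$ and $PP^*$ are diagonal, but \emph{not} with the same diagonal entries: equality would force $|\psi(j)| = |\psi(2^{-1}j)|$ for every $j$, which fails already for $N=5$ (e.g.\ $|\psi(1)| \approx 0.54$ while $|\psi(3)| \approx 0.21$, with $3 \equiv 2^{-1}\cdot 1 \pmod 5$). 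So $P^*P \ne PP^*$. This is exactly the situation flagged in the introduction of the paper: for non-normal $P$, the singular values of $L = I-P$ are \emph{not} $|1-\lambda|$ as $\lambda$ ranges over eigenvalues of $P$. Your entire reduction — identifying $\tau^{-1}$ with $\min_{\lambda\ne 1}|1-\lambda|$, then computing eigenvalues block-by-block along cycles of the doubling permutation — collapses at the first step.

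Beyond that, the rest of your plan builds on the eigenvalue picture and inherits the flaw, even though some of the underlying intuition is sound and in fact does appear, transmuted, in the paper's proof. The paper avoids normality entirely. For the upper bound on $\tau$, it uses the Fourier diagonalization only to get $\|P^n f\|^2 = \sum_j |\alpha_j|^2 \bigl(\prod_{k=0}^{n-1}\psi(2^k j)\bigr)^2$, then proves a purely combinatorial lemma (your ``doubling orbit must leave a neighborhood of $0$ within $\log_2 N$ steps'' idea, but applied to escaping $[N/3,2N/3]^c$, not to a $1/\log N$-sized neighborhood) giving the clean bound $\|P^n f\| \le (2/3)^{\lfloor n/(2+\log_2 N)\rfloor}\|f\|$; it then solves the Poisson equation by summing $\sum_n P^n f$ to conclude $\gamma \gtrsim 1/\log N$. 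Notice this is much coarser than what you propose and sidesteps the geometric-mean bookkeeping over long cycles that you identified as ``the delicate part.'' For the lower bound on $\tau$, rather than exhibiting an eigenvalue near $1$, the paper constructs an explicit test function $h = \sum_{j\le k} a_j b_j e_{2^{j+1}}$ with a tent-shaped coefficient profile along a single doubling orbit of length $\asymp \log N$ near frequency $1$, and estimates $\|(I-P)h\|/\|h\|$ directly. If you want to salvage the orbit-confined-near-$0$ heuristic you sketched, that explicit construction is the correct way to make it rigorous without invoking normality.
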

Theorem \ref{cdgthm} is proved in Section \ref{cdgproof}. The upper bound is proved by showing that for any $f$ with $\mu f = 0$ and any $n$, $\|P^n f\|\le e^{-Cn/\log N}\|f\|$, and then using this to show that the second-smallest singular value of $I-P$ is at least of order $(\log N)^{-1}$. The lower bound is proved by explicitly constructing a function $f$ such that $\|f\|=1$ and $\|(I-P)f\|=O((\log N)^{-1})$. 


\subsection{A card shuffling scheme}
The following nonreversible card shuffling scheme was introduced by Diaconis and Saloff-Coste~\cite{diaconissaloffcoste96}. Take a deck of $N$ cards. With probability $1/3$, do nothing; with probability $1/3$, swap the top two cards; and with probability $1/3$, take out the bottom card and place it on top. The following result shows that for this walk on the symmetric group $S_N$, order $N^3$ steps are necessary and sufficient for convergence of empirical averages. 
\begin{thm}\label{cardthm}
Let $\tau$ be the relaxation time of the Markov chain on $S_N$ defined above. Then there are positive constants $C_1$ and $C_2$, independent of $N$, such that $C_1N^3 \le \tau \le C_2 N^3$. 
\end{thm}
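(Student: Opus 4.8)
The plan is to prove the two bounds by different routes: the lower bound $\tau\ge C_1N^3$ from the empirical-average characterization of Theorem~\ref{avgthm}, and the upper bound $\tau\le C_2N^3$ by recognizing the shuffle as a lift of the interchange process on the cycle $\mathbb{Z}/N\mathbb{Z}$.

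\emph{Lower bound.} The key point is that the cyclic gap between the positions of two tagged cards moves very slowly. Writing $\sigma$ for the deck and $D(\sigma)\in\mathbb{Z}/N\mathbb{Z}$ for the difference of the positions of cards $1$ and $2$, put $g(\sigma):=\cos(2\pi D(\sigma)/N)$; then $\mu g=O(1/N)$ and $\|g-\mu g\|^{2}=\tfrac12+O(1/N)$. The move ``bottom card to top'' adds $1$ to every position modulo $N$, so it leaves $D$ and $g$ unchanged; the move ``swap the top two'' changes $D$ only when card $1$ or card $2$ sits at position $1$ or $2$, and then only by $\pm1$, so it changes $g$ by $O(1/N)$. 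Since $X_i\sim\mu$ for every $i$, the expected number of steps $i<n$ at which a tagged card is at position $\le 2$ is at most $4n/N$; and because the position-difference increments produced by excursions of card $1$ and of card $2$ carry opposite biases (conjugating by the transposition of labels $1$ and $2$ maps $D\mapsto-D$), the displacement $D(X_i)-D(X_0)$ is, up to a mean-zero telescoping, a $\pm1$-type walk receiving only $O(n/N)$ kicks by time $n$, so its variance at time $n=\varepsilon N^{3}$ is $O(\varepsilon N^{2})$. Hence for $\varepsilon$ a small absolute constant $\operatorname{Corr}(g(X_0),g(X_k))\ge\tfrac12$ for all $k\le n$, therefore $\operatorname{Var}(\mu_n g)\ge\tfrac12\operatorname{Var}(g)$, therefore $\Delta_n\ge c$, and \eqref{avg1} forces $\tau\ge c^{2}n/4=C_1N^{3}$. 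The only delicate point is the square-root cancellation of the increments of $D$; it follows from the excursion structure of a tagged card near the top of the deck (equivalently, one compares $D$ with a lazy nearest-neighbour walk on $\mathbb{Z}/N\mathbb{Z}$ that moves with probability $\Theta(1/N)$ per step), and it is exactly what separates the $N^{3}$ scale here from the $N^{2}$ one would get from the crude bound $\lvert D(X_n)-D(X_0)\rvert\le(\text{number of kicks})$.

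\emph{Upper bound.} Up to constants, this shuffle \emph{is} the interchange process on $\mathbb{Z}/N\mathbb{Z}$. View the deck as a cyclic arrangement of the $N$ cards together with a ``head'' which advances by one notch whenever the ``bottom to top'' move occurs; then ``swap the top two'' transposes the two cards currently under the head, and a short computation shows that in one full sweep (which takes $\Theta(N)$ steps) a transposition is attempted at each edge of the cycle with probability $\tfrac13$. Thus $\Theta(N)$ of our steps do the work of one sweep of the systematic-scan adjacent-transposition shuffle on the $N$-cycle, whose relaxation time is $\Theta(N^{3})$ by the Aldous spectral gap theorem (Caputo--Liggett--Richthammer). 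To turn this into a bound on $\gamma$ I would prove the $L^{2}$ contraction $\|P^{n}f\|\le e^{-cn/N^{3}}\|f\|$ for every $f$ with $\mu f=0$ --- either by a coupling of two copies of the chain that, whenever the head is in the relevant position, uses the optimal coupling of the interchange process, or through the pseudo-spectral gap, comparing $(P^{*})^{k}P^{k}$ for $k\asymp N$ with an interchange process making $\asymp N$ transpositions per step --- and then invoke the implication (already used for the Chung--Diaconis--Graham and torus examples) that $\|P^{n}f\|\le e^{-n/T}\|f\|$ for all $n$ forces the second-smallest singular value of $I-P$ to be $\gtrsim 1/T$.

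I expect the upper bound to be the main obstacle, and the failure of the routine tools is worth spelling out. The path argument of Theorem~\ref{paththm} only yields $\tau\le O(N^{4})$: the Cayley graph of $S_N$ on the generators ``swap the top two'' and ``rotate'' has diameter $\Theta(N^{2})$ --- a rotation-invariant potential summing, over all pairs of cards, the error in their cyclic position-difference changes by $O(N)$ per swap and is $\Theta(N^{3})$ in the worst case, certifying this --- so a random permutation lies at distance $\Theta(N^{2})$ and every flow has congestion $\Theta(N^{4})$. The generic Diaconis--Saloff-Coste comparison with the adjacent-transposition shuffle only yields $\tau\le O(N^{5})$, because each adjacent transposition costs a word of length $\Theta(N)$ in our generators and the resulting congestion on the ``rotate'' generator is $\Theta(N^{2})$. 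Getting the sharp exponent therefore requires genuinely exploiting the ``local transposition $+$ deterministic sweep'' product structure that makes this chain an interchange process, so an Aldous-type spectral gap input (or a hand-built coupling achieving the same $1-\Theta(1/N^{3})$ contraction rate) seems unavoidable.
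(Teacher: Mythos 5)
Your lower bound follows the same contour as the paper's: track the cyclic position-difference $D$ of two tagged cards, observe that ``bottom to top'' leaves $D$ unchanged, count $O(n/N)$ swap-kicks, and argue that $\E(D_n-D_0)^2=O(n/N)$ rather than $O((n/N)^2)$. You correctly flag that last step as the delicate one, but your two stated reasons for it do not quite deliver it. Conjugating by the label transposition $(1\,2)$ shows $D\overset{d}{=}-D$, hence $\E(D_n-D_0)=0$; that kills the mean but says nothing about the variance, which is what you need. The ``opposite biases of the two cards' excursions'' picture is also not the mechanism: the cancellation happens \emph{within} each card's own walk, because a visit to slot $0$ (positive excess drift) is always immediately followed by a visit to slot $1$ (equal negative excess drift). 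The paper makes this exact by writing $Y_{i+1}-Y_i=\xi_i(1+W_i)-W_i$ with $W_i=1_{\{Z_i=1\}}-1_{\{Z_i=0\}}$, so that $D_n-D_0=\sum_i\eta_i(W_i-W_i')-\tfrac12\sum_i(W_i-W_i')$ with $\eta_i=\xi_i-\tfrac12$ independent of the past: the first sum is a martingale with second moment $\le 2n/N$ by orthogonality, and the second sum telescopes to an $O(1)$ boundary term. That decomposition is the ingredient your ``mean-zero telescoping'' phrase gestures at but does not supply.

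The upper bound is where the proposal has a genuine gap, and also where you miss a shortcut. The paper's entire upper bound is one line: Diaconis and Saloff-Coste~\cite{diaconissaloffcoste96}, Example 2a, prove by a direct singular-value computation for this exact shuffle that the second-largest singular value of $P$ is at most $1-1/(41N^3)$; then $\|(I-P)f\|\ge\|f\|-\|Pf\|\ge\|f\|/(41N^3)$ for mean-zero $f$ (or invoke Theorem~\ref{revthm}) gives $\tau\le 41N^3$. Your alternative program --- view the shuffle as an interchange process with a deterministically advancing head, and import the $\Theta(N^3)$ relaxation time of the adjacent-transposition interchange on the $N$-cycle via Caputo--Liggett--Richthammer --- is conceptually attractive, and your diagnosis that the path argument ($O(N^4)$) and generic Dirichlet-form comparison ($O(N^5)$) fall short is a useful observation. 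But as stated it is not a proof. Aldous' theorem applies to the interchange process with a \emph{uniformly random} transposition edge at each step, whereas your chain transposes at a deterministic moving head interleaved with rotations; the ``one $\Theta(N)$-sweep $\approx$ one systematic scan'' heuristic does not hand you a Dirichlet-form inequality for $(P^*)^kP^k$, because the rotations mean that $(P^*)^kP^k$ is a convolution of many non-transposition moves whose step distribution you would have to control before any spectral-gap transfer is possible; and the coupling alternative is left entirely unspecified. So the upper bound as proposed is a plausible research program rather than a completed argument, and the missing ingredient is exactly the already-known singular-value bound the paper cites.
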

Theorem \ref{cardthm} is proved in Section \ref{cardproof}. The upper bound is proved using a result from \cite{diaconissaloffcoste96}, which says that the second-largest singular value of the transition matrix is bounded above by $1-1/(41N^3)$. This result, together with our Theorem \ref{revthm}, gives the upper bound. The lower bound takes additional work. 


\subsection{Nonreversible random walks on groups}
Let $G$ be a finite group and $A$ be a generating subset of $G$ (meaning that any element of $G$ can be written as a product of elements of $A$). The random walk on $G$ defined by $A$ is the walk that proceeds by left-multiplying with a uniformly chosen element of $A$. The unique invariant measure of any such walk is the uniform distribution on $G$. If the set $A$ is closed under inversion, then the walk is a reversible Markov chain. There is a large literature on reversible walks on groups. The following result shows how the relaxation time of a natural reversibilization of a nonreversible walk can be used to get an upper bound on the relaxation time of the nonreversible walk.
\begin{thm}\label{groupthm}
Let $G$ be a finite group and $A$ be a generating subset of $G$. The relaxation time (according to our definition) of the random walk defined by $A$ is bounded above by twice the relaxation time of the reversible walk defined by $A\cup A^{-1}$. 
\end{thm}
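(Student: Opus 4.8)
The plan is to realize both walks as averages of the translation operators $L_a$, $a\in G$, defined on $L^2(\mu)$ ($\mu$ uniform) by $(L_af)(x)=f(ax)$; each $L_a$ is unitary, with $L_a^*=L_{a^{-1}}$. Writing $P=\frac1{|A|}\sum_{a\in A}L_a$ for the $A$-walk and $R=\frac1{|B|}\sum_{b\in B}L_b$ for the walk defined by $B:=A\cup A^{-1}$, note that $R$ is self-adjoint because $B=B^{-1}$, and that both chains are irreducible --- hence have the constants as their only harmonic functions --- because $A$ generates $G$. Consequently $\gamma(P)=\min\{\|(I-P)f\|:f\perp\mathbf{1},\ \|f\|=1\}$, the variational description of the second-smallest singular value of $I-P$ (the kernel of $(I-P)^*(I-P)$ being one-dimensional), while $\gamma(R)=1-\mu_2(R)=\min\{\langle(I-R)f,f\rangle:f\perp\mathbf{1},\ \|f\|=1\}$, with $\mu_2(R)$ the second-largest eigenvalue of $R$. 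Since $\tau=1/\gamma(P)$ and the target bound is $2/\gamma(R)$, it suffices to prove $\gamma(P)\ge\tfrac12\gamma(R)$.

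The first substantive step is the elementary unitary identity $\|L_af-f\|^2=2\|f\|^2-2\Re\langle L_af,f\rangle$, which on averaging gives
\[
\|f\|^2-\Re\langle Pf,f\rangle=\frac1{2|A|}\sum_{a\in A}\|L_af-f\|^2,\qquad
\langle(I-R)f,f\rangle=\frac1{2|B|}\sum_{b\in B}\|L_bf-f\|^2,
\]
the second equality using that $R$ is self-adjoint so $\langle Rf,f\rangle$ is real. The second step compares the two right-hand sides: since $L_a$ is an isometry, $\|L_{a^{-1}}f-f\|=\|f-L_af\|=\|L_af-f\|$, and every $b\in B$ equals $a$ or $a^{-1}$ for some $a\in A$, so $\sum_{b\in B}\|L_bf-f\|^2\le 2\sum_{a\in A}\|L_af-f\|^2$, while $|B|\ge|A|$ trivially. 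Combining,
\[
\frac1{2|A|}\sum_{a\in A}\|L_af-f\|^2\ \ge\ \frac{|B|}{2|A|}\,\langle(I-R)f,f\rangle\ \ge\ \tfrac12\,\langle(I-R)f,f\rangle .
\]
The last step is a one-line Cauchy--Schwarz bound: for $f\perp\mathbf{1}$ with $\|f\|=1$,
\[
\|(I-P)f\|\ \ge\ \Re\langle(I-P)f,f\rangle\ =\ \|f\|^2-\Re\langle Pf,f\rangle\ \ge\ \tfrac12\langle(I-R)f,f\rangle\ \ge\ \tfrac12\gamma(R),
\]
and taking the infimum over such $f$ yields $\gamma(P)\ge\tfrac12\gamma(R)$, i.e.\ $\tau\le 2\,\tau_{\mathrm{rev}}$.

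There is no deep obstacle here; the one point requiring care is the combinatorial bookkeeping when $A$ is not disjoint from $A^{-1}$ or contains involutions, which is precisely where one is tempted to invoke $\gamma\ge\tfrac12\gamma_A$ from Theorem~\ref{revthm} for the additive reversibilization $\tfrac12(P+P^*)$. That route works, but unless $|A\cup A^{-1}|=2|A|$ it loses an extra factor and gives only $\tau\le 4\,\tau_{\mathrm{rev}}$; carrying out the comparison directly as above, the inequalities $\sum_{b\in B}\|L_bf-f\|^2\le 2\sum_{a\in A}\|L_af-f\|^2$ and $|B|\ge|A|$ combine to produce exactly the constant $2$ asserted in the theorem.
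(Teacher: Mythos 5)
Your proof is correct, and it is not the same argument as the paper's; the paper's proof is the single sentence ``this follows from Theorem~\ref{revthm} since the walk on $A\cup A^{-1}$ is the additive reversibilization of the original walk,'' which identifies $R:=\frac{1}{|A\cup A^{-1}|}\sum_{b\in A\cup A^{-1}}L_b$ with $\frac{1}{2}(P+P^*)$. That identification holds exactly only when $A$ is disjoint from $A^{-1}$ and $A$ contains no involutions (so that $|A\cup A^{-1}|=2|A|$); if some $a\in A$ has $a^{-1}\in A$, or is an involution, the additive reversibilization weights that generator more heavily than $R$ does, the two reversible chains are different, and a Dirichlet-form comparison between them costs a further factor of up to $2$, so that invoking Theorem~\ref{revthm} as a black box only yields $\gamma(P)\ge \tfrac12\gamma(\tfrac12(P+P^*))\ge\tfrac14\gamma(R)$. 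Your unitary-average computation $\Re\langle(I-P)f,f\rangle=\frac{1}{2|A|}\sum_{a\in A}\|L_af-f\|^2$, the invariance $\|L_{a^{-1}}f-f\|=\|L_af-f\|$, and the trivial bounds $\sum_{b\in A\cup A^{-1}}\|L_bf-f\|^2\le 2\sum_{a\in A}\|L_af-f\|^2$ and $|A\cup A^{-1}|\ge|A|$ carry out this Dirichlet-form comparison explicitly, and the final Cauchy--Schwarz step $\|(I-P)f\|\ge\Re\langle(I-P)f,f\rangle$ (valid for $\|f\|=1$) replaces the lossy $\gamma\ge\tfrac12\gamma_A$ of Theorem~\ref{revthm} with the sharper pointwise bound $\|(I-P)f\|\ge\langle(I-\tfrac12(P+P^*))f,f\rangle$, so that the two potential factor-of-two losses combine to exactly one and you land on the stated constant $2$ with no disjointness hypothesis on $A$. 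In short: your argument is self-contained, strictly more careful than the paper's citation, and is the version one should actually give when $A\cap A^{-1}$ may be nonempty.
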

This result is a straightforward consequence of Theorem \ref{revthm}, since the walk for the generating set $A\cup A^{-1}$ is the additive reversibilization of the original walk.

\section{Proofs}\label{proofsec}
\subsection{Proof of Theorem \ref{avgthm}}\label{avgproof}
First, suppose that $\tau=\infty$. Then the upper bound \eqref{avg1} is trivial. For the lower bound, note that $\tau=\infty$ means that $\gamma=0$, which implies that there is some $g:\ms \to \rr$ such that $\mu g=0$, $\|g\|=1$, and $Lg = 0$. The condition $Lg = 0$ is the same as $g = Pg$, which implies that $g = P^n g$ for all $n$. Thus,
\begin{align*}
\Delta_n^2 &\ge \E \biggl[\biggl(\frac{1}{n}\sum_{i=0}^{n-1} g(X_i)\biggr)^2\biggr]\\
&= \frac{1}{n^2}\sum_{i=0}^{n-1} \ee(g(X_i)^2) + \frac{2}{n^2}\sum_{1\le i<j\le n} \ee(g(X_i)g(X_j))\\
&= \frac{1}{n} + \frac{2}{n^2}\sum_{1\le i<j\le n} \ee(g(X_j) P^{j-i} g(X_j))\\
&= \frac{1}{n} + \frac{2}{n^2}\sum_{1\le i<j\le n} \ee(g(X_j)^2) = \frac{1}{n} + \frac{2}{n^2}{n\choose 2} = 1. 
\end{align*}
This proves the lower bounds \eqref{avg2} and \eqref{avg3} when $\tau=\infty$.

Next, suppose that $\tau<\infty$. Then $0$ is a singular value of $L$ with multiplicity $1$. This implies that $\mathrm{range}(L)$ has dimension $\ge |\ms|-1$.  But $\mathrm{range}(L)$ is contained in the $|\ms|-1$ dimensional subspace orthogonal to $\mu$. Thus, $\mathrm{range}(L)$ must be equal to this subspace. In particular, given any $g:\ms \to \rr$, there is a solution $f$ of the Poisson equation
\[
Lf = g - \mu g.
\]
Since $g - \mu g$ is orthogonal to $\mu$, the definition of $\tau$ implies that 
\begin{align}\label{fl2}
\|f\|\le \tau \|g-\mu g\|.
\end{align}
Let $f$ and $g$ be as above throughout the rest of this section. We will use $\E_x$ to denote expectation conditional on $X_0=x$. 
\begin{lmm}\label{upperlmm}
For any $n$,
\begin{align*}
\sum_x \mu(x)(\ee_x(\mu_n g - \mu g))^2 \le \frac{4\tau^2\|g-\mu g\|^2}{n^2}.
\end{align*}
\end{lmm}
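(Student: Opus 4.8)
The plan is to reduce everything to the Poisson equation and a telescoping sum. Recall that $f$ was chosen so that $Lf = g-\mu g$, i.e.\ $(I-P)f = g-\mu g$, so $g - \mu g = f - Pf$. For each $i$, conditioning on $X_0 = x$ and using the Markov property gives $\E_x(g(X_i) - \mu g) = \E_x\bigl((f-Pf)(X_i)\bigr) = (P^i f)(x) - (P^{i+1}f)(x)$. Summing over $i$ from $0$ to $n-1$, the sum telescopes, so
\[
\E_x(\mu_n g - \mu g) = \frac{1}{n}\bigl(f(x) - (P^n f)(x)\bigr).
\]

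Next I would square this identity, multiply by $\mu(x)$, and sum over $x$, which yields
\[
\sum_x \mu(x)\bigl(\E_x(\mu_n g - \mu g)\bigr)^2 = \frac{1}{n^2}\, \|f - P^n f\|^2 \le \frac{1}{n^2}\bigl(\|f\| + \|P^n f\|\bigr)^2.
\]
The only input still needed is that $P$ is a contraction with respect to the $\mu$-inner product, so that $\|P^n f\| \le \|f\|$; this is standard and follows from Jensen's inequality together with the invariance of $\mu$ (namely $\|Ph\|^2 = \sum_x \mu(x)|\E_x h(X_1)|^2 \le \sum_x \mu(x)\E_x|h(X_1)|^2 = \|h\|^2$). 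Hence the right-hand side is at most $4\|f\|^2/n^2$.

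Finally, invoking the bound \eqref{fl2}, $\|f\| \le \tau\|g-\mu g\|$, gives $\sum_x \mu(x)(\E_x(\mu_n g - \mu g))^2 \le 4\tau^2\|g-\mu g\|^2/n^2$, as claimed. There is no serious obstacle here; the one point worth stating carefully is the contractivity of $P$ on $L^2(\mu)$, and the slight subtlety that the telescoping identity requires the explicit solution $f$ of the Poisson equation rather than $g$ itself.
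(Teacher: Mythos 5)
Your proof is correct and follows essentially the same route as the paper's. The core step is identical: you solve the Poisson equation $Lf = g-\mu g$, write $\E_x(g(X_i) - \mu g)$ as a telescoping difference $(P^i f)(x) - (P^{i+1}f)(x)$, and obtain $\E_x(\mu_n g - \mu g) = \frac{1}{n}(f(x) - (P^n f)(x))$, which is exactly the paper's equation \eqref{maineq}. The only divergence is in the final estimate: you bound $\|f - P^n f\|^2$ by the triangle inequality together with $L^2(\mu)$-contractivity of $P$, whereas the paper uses Jensen's inequality followed by $(a-b)^2 \le 2a^2 + 2b^2$ and stationarity. Both yield the same constant $4$, and your version is marginally cleaner; this is a cosmetic rather than substantive difference.
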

\begin{proof}
Note that for any $x$,
\begin{align*}
\ee_x(g(X_n)-\mu g) &= \ee_x(f(X_n) - P f(X_n) ) = \ee_x(f(X_{n})-f(X_{n+1})).
\end{align*}
Thus,
\begin{align}
\ee_x(\mu_n g - \mu g)  &= \frac{1}{n}\sum_{k=0}^{n-1} \ee_x(g(X_k) - \mu g) \notag\\
&= \frac{1}{n}\sum_{k=0}^{n-1} \ee_x(f(X_{k})-f(X_{k+1}))\notag \\
&= \frac{1}{n}(f(x)-\ee_x f(X_n)).\label{maineq}
\end{align}
This shows that
\begin{align*}
\sum_x\mu(x)(\ee_x(\mu_ng - \mu g))^2 &= \frac{1}{n^2}\sum_x\mu(x)(\ee_x f(X_n) - f(x))^2\\
&\le  \frac{1}{n^2}\sum_x\mu(x)\ee_x[(f(X_n) - f(x))^2]\\
&\le \frac{1}{n^2}\sum_x \mu(x)(2\ee_x[f(X_n)^2] + 2f(x)^2)\\
&= \frac{4\|f\|^2}{n^2}.
\end{align*}
The proof is completed upon applying \eqref{fl2}.
\end{proof}
\begin{proof}[Proof of \eqref{avg1}]
Take any $g:\ms \to \rr$ such that $\mu g = 0$ and $\|g\|=1$. Then 
\begin{align*}
\sum_x \mu(x)\ee_x\biggl[\biggl(\frac{1}{n}\sum_{i=0}^{n-1} g(X_i)\bigg)^2\biggr]&= \frac{1}{n^2}\sum_x \sum_{i,j=0}^{n-1}  \mu(x) \ee_x[g(X_i)g(X_j)]\\
&= - \frac{1}{n^2}\sum_x\sum_{i=0}^{n-1} \mu(x) \ee_x[g(X_i)^2]\\
&\qquad + \frac{2}{n^2}\sum_x \sum_{i=0}^{n-1} \mu(x) \ee_x \biggl[g(X_i)\sum_{j=i}^{n-1} g(X_j)\biggr].
\end{align*}
The first term above is simply $-\|g\|^2/n$. For the second term, take any $0\le i\le n-1$. Define
\begin{align}
h(x) &:= \ee\biggl[\sum_{j=i}^{n-1} g(X_j)\biggl| X_i = x\biggr]\notag \\
&= \ee\biggl[\sum_{j=0}^{n-i-1} g(X_j)\biggl| X_0 = x\biggr]\notag \\
&= (n-i)\ee_x(\mu_{n-i} g).\label{hform}
\end{align}
Thus, 
\begin{align*}
\ee_x \biggl[g(X_i)\sum_{j=i}^{n-1} g(X_j)\biggr] &= \ee_x[g(X_i) h(X_i)]\\
&\le \sqrt{\ee_x[g(X_i)^2] \ee[h(X_i)^2]}.
\end{align*}
So, with an application of the Cauchy--Schwarz inequality, we get
\begin{align*}
&\sum_x  \mu(x) \ee_x \biggl[g(X_i)\sum_{j=i}^{n-1} g(X_j)\biggr] = \sum_x \mu(x)\ee_x[g(X_i)h(X_i)]\\
&\le \sqrt{\sum_x \mu(x) \ee_x[g(X_i)^2]\sum_x \mu(x) \ee_x[h(X_i)^2]} = \|g\| \|h\|.
\end{align*}
But by equation \eqref{hform} and Lemma \ref{upperlmm}, 
\begin{align*}
\|h\|^2 &= (n-i)^2 \sum_x \mu(x)( \ee_x(\mu_{n-i} g))^2 \le 4\tau^2\|g\|^2.
\end{align*}
Putting it all together, we get 
\begin{align*}
\sum_x \mu(x)\ee_x\biggl[\biggl(\frac{1}{n}\sum_{i=0}^{n-1} g(X_i)\bigg)^2\biggr] &\le \frac{4\tau\|g\|^2}{n}  - \frac{\|g\|^2}{n}.
\end{align*}
Taking supremum over $g$ completes the proof of \eqref{avg1}.
\end{proof}
It remains to prove \eqref{avg2} and \eqref{avg3}. To do this, we need the following lemma.
\begin{lmm}\label{deltalmm}
If $n= n_1+\cdots+n_k$, then
\[
n \Delta_n \le n_1\Delta_{n_1}+\cdots + n_k\Delta_{n_k}.
\]
Moreover, $\Delta_n \le 1$ for all $n$.
\end{lmm}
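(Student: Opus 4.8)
The plan is to prove the subadditivity-type bound by writing the empirical sum over $n$ steps as a concatenation of sums over $k$ consecutive blocks of lengths $n_1,\dots,n_k$, using stationarity and the Markov property to control each block, and then applying the triangle inequality in $L^2$. Fix $g$ with $\mu g = 0$ and $\|g\| = 1$; it suffices to bound $n\|\mu_n g\|_{L^2}$ by $\sum_j n_j \Delta_{n_j}$. Set the block boundaries $s_0 = 0$ and $s_j = n_1 + \cdots + n_j$, so that
\begin{align*}
n\,\mu_n g = \sum_{i=0}^{n-1} g(X_i) = \sum_{j=1}^k \sum_{i=s_{j-1}}^{s_j - 1} g(X_i) =: \sum_{j=1}^k S_j.
\end{align*}
By the triangle inequality in $L^2$, $n\|\mu_n g\|_{L^2} \le \sum_{j=1}^k \|S_j\|_{L^2}$, so it remains to show $\|S_j\|_{L^2} \le n_j \Delta_{n_j}$ for each $j$.

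For a single block, note that $S_j = \sum_{i=0}^{n_j - 1} g(X_{s_{j-1} + i})$. Conditioning on $X_{s_{j-1}}$ and using the Markov property, $S_j$ has the same conditional law (given $X_{s_{j-1}} = x$) as $n_{j}\,\mu_{n_j} g$ has given $X_0 = x$; and by stationarity, $X_{s_{j-1}} \sim \mu$, exactly matching the starting distribution in the definition of $\Delta_{n_j}$. Hence $\|S_j\|_{L^2}^2 = \sum_x \mu(x)\,\E_x[(n_j \mu_{n_j} g)^2] = n_j^2 \E[(\mu_{n_j} g - \mu g)^2] \le n_j^2 \Delta_{n_j}^2$, since $\|g - \mu g\| = \|g\| = 1$. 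Taking square roots and summing over $j$ gives $n\|\mu_n g\|_{L^2} \le \sum_j n_j \Delta_{n_j}$, and taking the supremum over $g$ yields the first claim. For the bound $\Delta_n \le 1$: with $\|g\|=1$ and $\mu g = 0$, stationarity gives $\E[g(X_i)^2] = \|g\|^2 = 1$ for each $i$, so by Cauchy--Schwarz (or convexity of $t \mapsto t^2$), $\E[(\mu_n g)^2] = \E\bigl[(\tfrac1n\sum_{i} g(X_i))^2\bigr] \le \tfrac1n \sum_i \E[g(X_i)^2] = 1$, hence $\Delta_n \le 1$.

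I expect no serious obstacle here; the only point requiring a little care is the identification of the conditional law of the block sum $S_j$ with that of $n_j\,\mu_{n_j} g$ — one must invoke both stationarity (to get the correct marginal $\mu$ at the block's left endpoint) and time-homogeneity of the Markov chain (to shift the block back to start at time $0$). Everything else is the triangle inequality in $L^2$ and a direct second-moment computation.
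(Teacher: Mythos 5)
Your proof is correct and follows essentially the same route as the paper: decompose the sum into $k$ consecutive blocks, apply the $L^2$ triangle inequality, and use stationarity (plus time-homogeneity) to identify each block's $L^2$ norm with $n_j\Delta_{n_j}$. The only cosmetic difference is in the bound $\Delta_n\le 1$: you prove it directly via Cauchy--Schwarz on $\mu_n g$, whereas the paper deduces it from the first inequality by taking $n=1+\cdots+1$ and noting $\Delta_1=1$; both are immediate.
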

\begin{proof}
Take any $g:\ms \to \rr$ such that $\|g-\mu g\|=1$. Without loss of generality, suppose that $\mu g=0$.  Then by the triangle inequality for the $L^2$ norm,
\begin{align*}
n\|\mu_n g - \mu g\| &= \biggl\|\sum_{i=0}^{n-1} g(X_i)\biggr\|_{L^2}\\
&\le \biggl\|\sum_{i=0}^{n_1-1}g(X_i)\biggr\|_{L^2} + \biggl\|\sum_{i=n_1}^{n_1+n_2-1}g(X_i)\biggr\|_{L^2}+  \cdots + \biggl\|\sum_{i=n_1+\cdots+ n_{k-1}}^{n-1} g(X_i)\biggr\|_{L^2}\\
&\le n_1 \Delta_{n_1}+\cdots +n_k\Delta_{n_k},
\end{align*}
where the last inequality follows from the stationarity of the chain. Taking supremum over $g$ completes the proof of the first inequality in the statement of the lemma. The second inequality follows from the first by writing $n=1+1+\cdots+1$, and observing that $\Delta_1=1$. 
\end{proof}

\begin{proof}[Proofs of \eqref{avg2} and \eqref{avg3}]
Take any $f$ that minimizes $\|Lf\|$ subject to $\mu f = 0$ and $\|f\|=1$. Note that $f$ is real-valued, since the entries of $L$ are real. Also, note that $\gamma = \|Lf\|$. Let $g := Lf$. For each $n$, define the functions 
\begin{align*}
u_n(x) &:= f(x) - \frac{1}{n}\sum_{k=n}^{2n-1}\ee_x (f(X_k)), \\  
v_n(x) &:= \frac{1}{n}\sum_{k=n}^{2n-1}\ee_x (f(X_k)), \\ 
w_n(x) &:= \frac{1}{n}\sum_{k=0}^{n-1}\ee_x (g(X_k)). 
\end{align*}
Since $f = u_n+v_n$ and $\|f\|=1$, we have 
\begin{align}
1 &= \|f\|\le \|u_n\| + \|v_n\|.  \label{fxbd}
\end{align}
Next, note that $\mu g = 0$, and \eqref{maineq} holds for this $g$ and $f$. Thus, for any $x$ and $n$,
\begin{align}
u_n(x) &=  \frac{1}{n} \sum_{k=n}^{2n-1} (f(x) - \ee_x(f(X_k)))= \frac{1}{n}\sum_{k=n}^{2n-1} k w_k(x).\notag
\end{align}
Since $\|w_k\|\le\|g\| \Delta_k = \gamma \Delta_k$ for each $k$, this shows that 
\begin{align}\label{fxbd1}
\|u_n\| \le \frac{1}{n}\sum_{k=n}^{2n-1} k \|w_k\| \le 2n\gamma\max_{n\le k\le 2n-1} \Delta_k. 
\end{align}
On the other hand, the definitions of $v_n$ and $\Delta_n$, together with the fact that $\|f-\mu f\|=1$, show that
\begin{align}\label{fxbd2}
\|v_n\| &= \frac{1}{n}\|2n \mu_{2n} f - n \mu_n f\|\notag \\
&\le 2 \|\mu_{2n} f\| + \|\mu_n f\|\le 2\Delta_{2n} +\Delta_n.
\end{align}
Using \eqref{fxbd1} and \eqref{fxbd2} in \eqref{fxbd}, we get
\begin{align*}
1 &\le (2n\gamma + 3)\max_{n\le k\le 2n} \Delta_k.
\end{align*}
This proves \eqref{avg3}. Finally, take any $m\le \tau/3$. Let $n := 12m$. By the above inequality, we know that there is some $k$ between $n$ and $2n$ such that 
\[
\Delta_k \ge \frac{1}{2n\gamma + 3} = \frac{1}{24m\gamma +3} \ge \frac{1}{8\tau \gamma+3} = \frac{1}{11}. 
\]
Write $ k = qm+r$, where $q$ and $r$ are  the quotient and the remainder when dividing $k$ by $m$. Then by Lemma \ref{deltalmm},
\begin{align*}
 \Delta_k &\le \frac{qm}{k}\Delta_m + \frac{r}{k}\Delta_r\le \Delta_m + \frac{r}{k}\le \Delta_m + \frac{1}{12}.
\end{align*}
Combining the last two displays, we get $\Delta_m \ge 1/132$, proving \eqref{avg2}. 
\end{proof}

\subsection{Proof of Theorem \ref{revthm}}\label{revproof}
First, let us prove the lower bounds. Let $\lambda_M$ be the second-largest eigenvalue of $M$, so that $\gamma_M = 1-\lambda_M$ and the second-largest singular value of $P$ is $\sqrt{\lambda_M}$. Thus, for any $f:\ms \to \C$ such that $\mu f = 0$, we have 
\[
\|(I-P)f\| \ge \|f\|-\|Pf\|\ge (1- \sqrt{\lambda_M}) \|f\|.
\]
This shows that
\begin{align*}
\gamma &\ge 1-\sqrt{\lambda_M} = 1-\sqrt{1-\gamma_M}\ge 1-\biggl(1-\frac{1}{2}\gamma_M\biggr) = \frac{\gamma_M}{2},
\end{align*}
where we used the inequality $\sqrt{1-x}\le 1-\frac{1}{2}x$ that holds for all $x\in (0,1)$.  Next, define
\[
R := \frac{1}{2}(I+P),
\]
so that $R$ is a transition matrix, and the diagonal elements of $R$ are all $\ge 1/2$. Let $\beta$ be the second-largest singular value of $R$ and $\lambda$ be the second-largest eigenvalue of $\frac{1}{2}(R+R^*)$. Then by \cite[Corollary 2.9]{fill91}, $\beta^2 \le \lambda$. Thus, for any $f:\ms \to \C$ with $\mu f = 0$, we have
\[
\|(I-R)f\|\ge \|f\|-\|Rf\| \ge (1-\beta)\|f\|\ge (1-\sqrt{\lambda})\|f\|. 
\]
But, note that 
\[
I-R = \frac{1}{2}(I-P),
\]
and 
\[
\frac{1}{2}(R+R^*) = \frac{1}{2}(I+P_A). 
\]
Thus,  $\lambda = 1-\gamma_A/2$, and therefore,
\begin{align*}
\|(I-P)f\| &= 2\|(I-R)f\| \\
&\ge 2(\|f\|-\|Rf\|) \ge 2(1-\beta)\|f\|\\
&\ge 2(1-\sqrt{\lambda}) \|f\| = 2(1-\sqrt{1-\gamma_A/2})\|f\|\\
&\ge 2(1-(1-\gamma_A/4))\|f\| = \frac{\gamma_A\|f\|}{2}. 
\end{align*}
This shows that $\gamma\ge \frac{1}{2}\gamma_A$.

Next, let us prove the upper bounds on $\gamma$ in terms of $\gamma_A$ and $\gamma_M$. The following proof is due to Piyush Srivastava, and was communicated to me by Hariharan Narayanan. It improves on the proof in the first draft of this paper. Let $L := I-P$, $L_A:= I-A$ and $L_M := I-M$. Then note that 
\begin{align*}
LL^* &= I - P - P^* + PP^* \\
&= 2I - P - P^* - (I-PP^*) = 2L_A - L_M.
\end{align*}
Thus, for any $f$ with $\mu f =0$ and $\|f\|=1$, we have 
\begin{align*}
\smallavg{f, LL^*f} &= 2\smallavg{f, L_A f} - \smallavg{f, L_M f}.
\end{align*}
Since $M$ is the transition matrix of a Markov chain that is reversible with respect to the stationary measure $\mu$, $L_M$ is positive semidefinite with respect to the inner product $\smallavg{\cdot,\cdot}$. Thus, $\smallavg{f, L_M f}\ge 0$. This shows that
\[
\smallavg{f, LL^*f}\le  2\smallavg{f, L_A f}.
\]
Taking infimum over $f$ on both sides shows that $\gamma^2 \le 2\gamma_A$.  Next, if $P(x,x)\ge 1/2$ for all $x\in \ms$, then by \cite[Equation (2.17)]{fill91}, the second-largest eigenvalue of $M$ is bounded above by second-largest eigenvalue of $A$, or equivalently, $\gamma_M \ge \gamma_A$. Thus, $\gamma_M \ge  \frac{1}{2}\gamma^2$.

\subsection{Proof of Theorem~\ref{mixthm}}\label{mixproof}
We need the following lemma.
\begin{lmm}\label{linftylmm}
Let $P$ and $Q$ be two Markov transition matrices on a finite state space $\ms$ such that $\mu$ is an invariant probability measure for both. Let $R:= PQ$. Then
\[
\max_{x\in \ms} \|R_x-\mu\|_{\textup{TV}} \le 2 \max_{x\in \ms} \|P_x-\mu\|_{\textup{TV}}\cdot \max_{x\in \ms} \|Q_x-\mu\|_{\textup{TV}}. 
\]
\end{lmm}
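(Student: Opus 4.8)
The plan is to decompose $R_x-\mu$ as a sum of products of ``fluctuations'' and then pass to the $\ell^1$ norm. Since $\sum_{y\in\ms}P(x,y)=1=\sum_{y\in\ms}\mu(y)$ and $\mu$ is invariant for $Q$ (so that $\sum_{y\in\ms}\mu(y)Q_y=\mu$), one has the identity
\[
R_x-\mu=\sum_{y\in\ms}P(x,y)Q_y-\sum_{y\in\ms}\mu(y)Q_y=\sum_{y\in\ms}\bigl(P(x,y)-\mu(y)\bigr)\bigl(Q_y-\mu\bigr),
\]
where the last step uses $\sum_{y\in\ms}(P(x,y)-\mu(y))\mu=0$. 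Spotting this decomposition is the one genuinely substantive step; everything after it is bookkeeping.

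Next I would use that for probability measures $\|\nu-\mu\|_{\textup{TV}}=\tfrac12\sum_z|\nu(z)-\mu(z)|$, together with the triangle inequality and interchange of summation, to write
\[
\|R_x-\mu\|_{\textup{TV}}=\frac12\sum_z\Bigl|\sum_{y\in\ms}(P(x,y)-\mu(y))(Q_y(z)-\mu(z))\Bigr|\le\frac12\sum_{y\in\ms}|P(x,y)-\mu(y)|\sum_z|Q_y(z)-\mu(z)|.
\]
The inner $z$-sum equals $2\|Q_y-\mu\|_{\textup{TV}}\le 2\max_{x\in\ms}\|Q_x-\mu\|_{\textup{TV}}$, and the outer $y$-sum equals $2\|P_x-\mu\|_{\textup{TV}}\le 2\max_{x\in\ms}\|P_x-\mu\|_{\textup{TV}}$; multiplying these and cancelling the leading $\tfrac12$ against one of the factors of $2$ yields $\|R_x-\mu\|_{\textup{TV}}\le 2\max_{x\in\ms}\|P_x-\mu\|_{\textup{TV}}\cdot\max_{x\in\ms}\|Q_x-\mu\|_{\textup{TV}}$, and taking the maximum over $x$ on the left finishes the proof.

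The only point requiring care is the accounting of the normalization constants: the factor $2$ in the conclusion is precisely what remains once the single $\tfrac12$ from converting TV to $\ell^1$ on the left is set against the two factors of $2$ from converting $\ell^1$ back to TV on the right. I do not expect any real obstacle beyond identifying the decomposition in the first display; note in passing that only invariance of $\mu$ for $Q$ is actually invoked (invariance for $P$ is never used), so the argument is asymmetric even though the resulting bound is symmetric in $P$ and $Q$.
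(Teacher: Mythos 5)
Your proof is correct and is essentially the paper's argument: the paper sets $M(x,y)=\mu(y)$, notes $R-M=(P-M)(Q-M)$ (which at the level of rows is exactly your decomposition $R_x-\mu=\sum_{y}(P(x,y)-\mu(y))(Q_y-\mu)$), and then invokes submultiplicativity of the $L^\infty\to L^\infty$ operator norm, which equals the maximal row $\ell^1$ norm and hence twice the maximal TV distance — the same bookkeeping you carry out by hand. Your side observation that only stochasticity of $P$ and invariance of $\mu$ for $Q$ are used is also accurate and is reflected in the paper's identity $PQ-M=PQ-PM-MQ+M^2$, which needs only $PM=M$, $MQ=M$, and $M^2=M$.
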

\begin{proof}
Let $\|\cdot\|_{L^\infty}$ denote the $L^\infty$ norm on $\ms$. For any square matrix $A$ whose rows and columns are indexed by elements of $\ms$, let
\[
\|A\|_{L^\infty \to L^\infty} := \max\{\|Af\|_{L^\infty}: \|f\|_{L^\infty}\le 1\}. 
\]
From this definition, it is easy to deduce that for any $A$ and $B$,
\begin{align}\label{linftyineq}
\|AB\|_{L^\infty \to L^\infty} &\le \|A\|_{L^\infty \to L^\infty}\|B\|_{L^\infty \to L^\infty}.
\end{align}
On the other hand, it is also easy to show that 
\begin{align}\label{linftyform}
\|A\|_{L^\infty \to L^\infty} &= \max_{x\in \ms} \sum_{y\in \ms}|A(x,y)|. 
\end{align}
Let $M$ be the matrix defined as $M(x,y)=\mu(y)$. Then $M^2=M$, and using the fact that $\mu$ is an invariant measure for both $P$ and $Q$, we get $MP=PM=M$ and $MQ=QM=M$. Thus,
\[
R - M = PQ-M= (P-M)(Q-M).
\]
Therefore, by \eqref{linftyform} and \eqref{linftyineq},
\begin{align*}
\max_{x\in \ms} \|R_x-\mu\|_{\textup{TV}} &= \frac{1}{2}\max_{x\in \ms} \sum_{y\in \ms}|R(x,y) - \mu(y)| \\
&= \frac{1}{2}\|R-M\|_{L^\infty\to L^\infty} \\
&=  \frac{1}{2}\|(P-M)(Q-M)\|_{L^\infty\to L^\infty}\\
&\le  \frac{1}{2}\|P-M\|_{L^\infty\to L^\infty} \|Q-M\|_{L^\infty\to L^\infty}.
\end{align*}
The proof is now completed by another application of \eqref{linftyform}. 
\end{proof}
We are now ready to complete the proof of Theorem \ref{mixthm}.
\begin{proof}[Proof of Theorem \ref{mixthm}]
Take any $f:\ms \to \C$ such that $\mu f = 0$. Fix some $\ve\in (0,1/5)$, and let $n := \tmix(\ve)$. Let $\tau'$ be the relaxation time of $P^n$, so that
\begin{align}\label{firstineq}
\|f - P^n f\| \ge \frac{\|f\|}{\tau'}.
\end{align}
Now, for each $k$,
\[
\|P^k f - P^{k+1} f\|= \|P^k(I-P)f\|\le \|(I-P)f\|,
\]
since $P$ is an $L^2(\mu)$-contraction. Thus, 
\[
\|f-P^n f\| \le \sum_{k=0}^{n-1}\|P^k f - P^{k+1} f\|\le n\|f - P f\|.
\]
Combining this with \eqref{firstineq}, we get
\[
\|f-Pf\| \ge \frac{\|f\|}{n\tau'}.
\]
Thus, we have
\begin{align}\label{maintau}
\tau \le n\tau'.
\end{align}
Next, let $A$ be the additive reversibilization of $P^n$, that is,
\[
A := \frac{1}{2}((P^*)^n + P^n). 
\]
Then note that for any $x\in \ms$,
\begin{align*}
\|A^2_x - \mu \|_{\textup{TV}} &\le \frac{1}{4}(\|((P^*)^{2n})_x - \mu \|_{\textup{TV}} + \|((P^*)^nP^n)_x - \mu\|_{\textup{TV}}\\
&\qquad \qquad \qquad  +  \|(P^n(P^*)^n)_x - \mu\|_{\textup{TV}}  + \|P_x^{2n} - \mu\|_{\textup{TV}}).
\end{align*}
By Lemma \ref{linftylmm} and the fact that $n= \tmix(\ve)$, this gives us
\begin{align*}
\max_{x\in \ms} \|A^2_x - \mu \|_{\textup{TV}}  &\le \frac{1}{4}(1+ 4\ve + 2\ve^2) =: \ve' <\frac{1}{2},
\end{align*}
where the last inequality holds because $\ve < 1/5$. Let $\sigma_{\textup{rel}}$ and $\sigma_{\textup{mix}}(\cdot)$ denote the relaxation time and mixing time of $A$. The above inequality shows that 
\[
\sigma_{\textup{mix}}(\ve')\le 2.
\]
Since $A$ is a reversible transition matrix, this shows that by \eqref{relineq}, 
\[
\sigma_{\textup{rel}} \le \frac{2}{\log\frac{1}{2\ve'}} + 1. 
\]
But by Theorem \ref{revthm}, $\tau' \le 2\sigma_{\textup{rel}}$. By \eqref{maintau}, this completes the proof of the upper bound on $\tau$ in terms of $\tmix(\ve)$. 

Next, note that by Theorem \ref{revthm} and the assumption that $P(x,x)\ge 1/2$ for all $x$, we have $\gamma \le \sqrt{6\gamma_M}$, where $M= PP^*$. Now, $M$ is self-adjoint with respect to the inner product \eqref{ipdef}, and has the same eigenvalues as $P^*P$. Since $P^*$ and $P$ are both Markov transition matrices with invariant measure $\mu$, so is $P^*P$. Thus, for any $f$ such that $\mu f = 0$,
\begin{align*}
\|Pf\|^2 &= \smallavg{Pf, Pf} = \smallavg{f, P^*Pf} \le (1-\gamma_M)\|f\|^2 \le \biggl(1-\frac{\gamma^2}{6}\biggr)\|f\|^2.
\end{align*}
Iterating this, we get that for any $n$,
\begin{align}\label{pnf1}
\|P^n f\|\le  \biggl(1-\frac{\gamma^2}{6}\biggr)^{n/2}\|f\|.
\end{align}
Now, for any $f$, $x$ and $n$, 
\begin{align*}
|\E_x(f(X_n)) - \mu f| &= \frac{1}{\mu(x)} \mu(x)|\E_x(f(X_n)) - \mu f|\\
&\le  \frac{1}{\mu(x)}\sum_{y\in \ms} \mu(y)|\E_y(f(X_n)) - \mu f|\\
&\le \frac{1}{\mu(x)}\sqrt{\sum_{y\in \ms} \mu(y)|\E_y(f(X_n)) - \mu f|^2}\\
&= \frac{1}{\mu(x)}\|P_n f - \mu f\|. 
\end{align*}
Applying \eqref{pnf1} and taking supremum over $f:\ms \to [-1,1]$, we get
\[
\|P_x^n - \mu\|_{\textup{TV}} \le \frac{1}{2\mu(x)}  \biggl(1-\frac{\gamma^2}{6}\biggr)^{n/2}. 
\]
Take any $\ve\in (0,1)$. If $\tmix(\ve)=0$, there is nothing to prove. So, let us assume that $\tmix(\ve) \ge 1$. If $n= \tmix(\ve)-1$, then the left side above is $> \ve$ for some $x$. Note that $\mu(x)>0$, since $P(y,y)\ge 1/2$ for all $y$. Thus, the above inequality shows that 
\[
\biggl(1-\frac{\gamma^2}{6}\biggr)^{(\tmix(\ve)-1)/2} >  2\ve \mu(x)\ge 2\ve \mumin.
\]
Taking logarithms on both sides, we get
\[
\frac{\tmix(\ve) -1}{2} \le \frac{\log(2\ve \mumin)}{\log(1-\frac{\gamma^2}{6})}. 
\]
Since $\log (1-x)\le -x$ for all $x\in (0,1)$, this gives
\[
\frac{\tmix(\ve) -1}{2} \le \frac{6}{\gamma^2}\log \frac{1}{2\ve \mumin}.
\]
This completes the proof of the upper bound on $\tmix(\ve)$ in terms of $\tau$.
\end{proof}

\subsection{Proof of Theorem \ref{cheegerthm}}\label{cheegerproof}
Let $A:= \frac{1}{2}(P+P^*)$ be the additive reversibilization of $P$. Let $Q_A(x,y) := \mu(x)A(x,y)$. It is not hard to check that
\[
P^*(x,y) = \frac{\mu(y)}{\mu(x)} P(y,x). 
\]
Thus,
\begin{align*}
Q_A(x,y) &= \frac{1}{2}\mu(x) P(x,y) + \frac{1}{2}\mu(x)\frac{\mu(y)}{\mu(x)} P(y,x)\\
&= \frac{1}{2}(Q(x,y) + Q(y,x)),
\end{align*}
where $Q(x,y) := \mu(x)P(x,y)$. Thus, for any $B\subseteq \ms$ with $\mu(B) >0$, we have
\[
Q_A(B,B^c) = \frac{1}{2}(Q(B,B^c)+Q(B^c, B))\ge \frac{1}{2}Q(B,B^c). 
\]
This shows that $\xi_A \ge \frac{1}{2}\xi$, where $\xi_A$ is the Cheeger constant of the $A$-chain. But by Theorem~\ref{revthm}, $\gamma \ge \frac{1}{2}\gamma_A$, and by the inequality \eqref{cheegerineq} for the reversible chain $A$, $\gamma_A \ge \frac{1}{2}\xi_A^2$. Combining, we get
\[
\gamma \ge \frac{1}{2}\gamma_A \ge \frac{1}{4}\xi_A^2\ge \frac{1}{16}\xi^2.
\]
This proves the claimed lower bound on $\gamma$. Next, let us prove the upper bound. Let $A\subseteq \ms$ be a set where the minimum in the definition of $\xi$ is attained, so that $\mu(A)>0$.  For each $n\ge 0$, let
\begin{align*}
Y_n := 1_A(X_0)1_A(X_1)\cdots 1_A(X_n).
\end{align*}
Since $X_0,X_1,\ldots$ is a stationary Markov chain, 
\begin{align*}
\E|Y_n - Y_{n+1}| &= \P(X_0\in A, \ldots, X_n\in A, X_{n+1}\notin A)\\
&\le \P(X_n\in A, X_{n+1}\notin A) = Q(A, A^c)= \xi \mu(A).
\end{align*}
Thus, for any $n\ge 1$,
\begin{align*}
\E|Y_0-Y_{n-1}| &\le (n-1) \xi \mu(A).
\end{align*}
But
\[
\E|Y_0-Y_{n-1}| = \P(X_0\in A) - \P(X_0\in A, \ldots,X_{n-1}\in A). 
\]
Combining the last two displays, we get
\begin{align}\label{pnxi}
\P(X_0\in A, \ldots,X_{n-1}\in A) \ge (1-(n-1)\xi)\mu(A). 
\end{align}
Let 
\[
f(x) := \frac{1_A(x)-\mu(A)}{\sqrt{\mu(A)(1-\mu(A))}},
\]
so that $\mu f = 0$ and $\|f\|=1$. Now, if $X_0\in A,\ldots,X_{n-1}\in A$, then 
\[
\mu_n f = \frac{1}{n}\sum_{i=0}^{n-1}f(X_i) = \frac{1-\mu(A)}{\sqrt{\mu(A)(1-\mu(A)}} = \sqrt{\frac{1-\mu(A)}{\mu(A)}}. 
\]
Since $\mu(A)\le 1/2$, we can now use \eqref{pnxi} to conclude that
\begin{align*}
\|\mu_n f\|_{L^2}^2 &\ge \frac{1}{2\mu(A)} \P(X_0\in A,\ldots, X_{n-1}\in A)\ge \frac{1 - (n-1)\xi}{2}.
\end{align*}
On the other hand, by Theorem \ref{avgthm},
\[
\|\mu_n f \|_{L^2}^2 \le \frac{4}{\gamma n}. 
\]
Combining the last two displays, we get that for any $n\ge 1$ such that $1-(n-1)\xi \ge 0$,
\[
\gamma \le \frac{8}{(1 - (n-1)\xi)n}. 
\]
Choosing $n$ such that $n-1\le 1/2\xi \le n$, we get
\[
\gamma \le \frac{16}{n}\le 32\xi.
\]
This completes the proof of the upper bound.

\subsection{Proof of Theorem \ref{paththm}}\label{pathproof}
For $f:\ms \to \R$, define
\begin{align*}
\me(f) := \sum_{x, y\in \ms} Q(x,y)(f(x)-f(y))^2,
\end{align*}
where $Q(x,y) := \mu(x) P(x,y)$. Let $f_0$ be a function that minimizes $\me(f)$ under the constraints that $\|f\|=1$ and $\mu f=0$. Note that for any $f:\ms \to \R$, 
\begin{align*}
\me(f) &= \E[(f(X_0)-f(X_1))^2]\\
&= \E(f(X_0)^2) + \E(f(X_1)^2) - 2\E(f(X_0)f(X_1))\\
&= 2\E(f(X_0)^2) - 2\E(f(X_0)Pf(X_0))\\
&= 2\smallavg{f, (I-P)f} \le 2\|f\|\|(I-P)f\|=2\|(I-P)f\|. 
\end{align*}
The minimum value of the right side among all $f$ satisfying the above constraints is $2\gamma$. Thus, 
\begin{align}\label{meineq}
\me(f_0)\le 2\gamma.
\end{align}
For any directed edge $e= (a,b)\in E$, let $|\nabla f_0(e)| := |f_0(a)-f_0(b)|$. For $x=y$, let $\Gamma_{x,y}$ denote the empty set. Then note that by the Cauchy--Schwarz inequality, 
\begin{align*}
&\sum_{x, y\in \ms} \mu(x)\mu(y)(f_0(x)-f_0(y))^2 \le\sum_{x,y\in \ms} \mu(x)\mu(y) \biggl(\sum_{e\in \Gamma_{x,y}} |\nabla f_0(e)|\biggr)^2\\
&\le \sum_{x,y\in \ms} \mu(x)\mu(y) |\Gamma_{x,y}|\biggl(\sum_{e\in \Gamma_{x,y}} |\nabla f_0(e)|^2\biggr)\\
&= \sum_{e\in E} Q(e) |\nabla f_0(e)|^2\biggl(\frac{1}{Q(e)}\sum_{x,y\in \ms:\, \Gamma_{x,y}\ni e}\mu(x)\mu(y) |\Gamma_{x,y}|\biggr)\\
&\le \sum_{e\in E} Q(e) |\nabla f_0(e)|^2 \biggl(\max_{e'\in E}\frac{1}{Q(e')}\sum_{x,y\in \ms: \,\Gamma_{x,y}\ni e'} \mu(x)\mu(y) |\Gamma_{x,y}|\biggr) = B \sum_{e\in E} Q(e) |\nabla f_0(e)|^2.
\end{align*}
But note that
\begin{align*}
\sum_{e\in E} Q(e) |\nabla f_0(e)|^2 &= \sum_{a,b\in \ms} Q(a,b)(f_0(a)-f_0(b))^2 = \me(f_0). 
\end{align*}
Also, since $\mu f = 0$ and $\|f\|=1$, we have
\[
\sum_{x, y\in \ms} \mu(x)\mu(y)(f_0(x)-f_0(y))^2 = 2.
\]
Thus, by \eqref{meineq}, we get $\gamma \ge 1/B$.

\subsection{Proof of Theorem \ref{formthm}}\label{formproof}
Note that the transition matrix $P$ is a circulant matrix in this problem. It is easy to check that any circulant matrix is normal with respect to the ordinary inner product. Moreover, it is known that the eigenvalues of $P$, since it is a circulant matrix, are $\lambda_0,\ldots,\lambda_{N-1}$, where 
\[
\lambda_j = \sum_{r=1}^k p_r e^{2\pi \I ja_r/N}. 
\]
The result now follows by Theorem \ref{revnormthm}.

\subsection{Proof of Theorem \ref{randthm}}\label{randproof}
Fix $k$ and $p_1,\ldots, p_k$. Throughout this proof, $C, C_1,C_2,\ldots$ will denote any positive constants that depend only on $k$ and $p_1,\ldots,p_k$, whose values may change from line to line. Let $a_1,\ldots,a_k$ be drawn independently and uniformly at random from $\{1,\ldots, N-1\}$. 
For each $j$ and $r$, let $X_{j,r}$ be the unique element of $\{-(N-1)/2,\ldots,(N-1)/2\}$ such that $ja_r = X_{j,r} \bmod N$. Then  note that
\begin{align*}
\biggl|1- \sum_{r=1}^k p_r e^{2\pi \I ja_r/N}\biggr|^2 &= \biggl(\sum_{r=1}^k p_r (1-\cos(2\pi  ja_r/N))\biggr)^2 + \biggl(\sum_{r=1}^k p_r \sin(2\pi  ja_r/N)\biggr)^2\\
&= \biggl(\sum_{r=1}^k p_r (1-\cos(2\pi  X_{j,r}/N))\biggr)^2 + \biggl(\sum_{r=1}^k p_r \sin(2\pi  X_{j,r}/N)\biggr)^2. 
\end{align*}
Take any $\delta\in (0,1)$. Fix some $j$, and let $E_j$ be the event that the above quantity is less than $\delta^2$. If $E_j$ happens, then the above identity shows that both terms on the right are less than $\delta^2$. Since $1-\cos(2\pi  X_{j,r}/N)\ge 0$, this implies that for all $r$,
\[
1-\cos(2\pi  X_{j,r}/N)\le C\delta.
\]
Since $2\pi X_{j,r}/N\in (-\pi,\pi)$ and $\delta\in (0,1)$, a consequence of the above inequality is that
\begin{align}\label{cond1}
|X_{j,r}|\le C N\sqrt{\delta}.
\end{align}
This implies that
\[
|\sin(2\pi X_{j,r}/N) - 2\pi X_{j,r}/N|\le C\delta^{3/2}. 
\]
Thus,
\begin{align*}
\biggl|\sum_{r=1}^k p_r \sin(2\pi  X_{j,r}/N) - \sum_{r=1}^k p_r 2\pi X_{j,r}/N\biggr| &\le C \delta^{3/2}. 
\end{align*}
But, if $E_j$ happens, then 
\begin{align*}
\biggl|\sum_{r=1}^k p_r \sin(2\pi  X_{j,r}/N)\biggr| &\le \delta. 
\end{align*}
Combining, we get
\begin{align}\label{cond2}
\biggl|\sum_{r=1}^k p_r 2\pi X_{j,r}/N\biggr| \le C\delta. 
\end{align}
By \eqref{cond1} and \eqref{cond2}, 
\begin{align*}
\P(E_j) &\le \P\biggl(\biggl|\sum_{r=1}^k p_r 2\pi X_{j,r}\biggr| \le CN\delta \text{ and } |X_{j,r}|\le C N\sqrt{\delta} \text{ for all } r\biggr) \\
&\le  \P\biggl(\biggl|X_{j,1} +\frac{1}{p_1} \sum_{r=2}^k p_r X_{j,r}\biggr| \le CN\delta \text{ and } |X_{j,r}|\le C N\sqrt{\delta} \text{ for } 2\le r\le k \biggr).
\end{align*}
By the assumption that $N$ is prime, it follows that $X_{j,r}$ is uniformly distributed in $\{-(N-1)/2,\ldots,(N-1)/2\}$. Moreover, $X_{j,1},\ldots,X_{j,k}$ are independent. Thus, the above probability is bounded above by $C\delta \cdot \delta^{(k-1)/2} = C\delta^{(k+1)/2}$. Therefore,
\begin{align*}
\P(\tau \ge \delta^{-1}) &= \P(E_1\cup\cdots \cup  E_{N-1}) \le \sum_{j=1}^{N-1} \P(E_j) \le CN \delta^{(k+1)/2}. 
\end{align*}
Substituting $\delta = L^{-1}N^{-2/(k+1)}$ completes the proof.

\subsection{Proof of Theorem \ref{pathex}}\label{pathexproof}
Throughout this proof, $C, C_1, C_2,\ldots$ will denote positive constants that depend only on $d$ and the $p_i$'s, whose values may change from line to line. We will use Theorem \ref{paththm} to get the upper bound on the relaxation time. Let $E$ be as in Theorem \ref{paththm}. Note that an edge $(x,x\pm e_i)$ is in $E$ if and only if $p_{\pm i} >0$. Since at least one of $p_i$ and $p_{-i}$ is nonzero for each $1\le i\le d$, this allows us to construct a directed path $\Gamma_{x,y}$ connecting any $x\in (\Z/N\Z)^d$ to any other $y\in (\Z/N\Z)^d$ along the edges of $E$ such that $|\Gamma_{x,y}|\le d N$ for any $x$ and $y$. Thus, for any $e\in E$,
\[
\sum_{x,y:\, \Gamma_{x,y}\ni e} \mu(x)\mu(y)|\Gamma_{x,y}|\le d N. 
\]
Since 
\[
Q(e) \ge \frac{1}{N} \min\{p_i: -d\le i\le d, \, p_i\ne 0\}, 
\]
this shows that $B\le CN^2$, where $B$ is as in Theorem \ref{paththm}. Thus, by Theorem \ref{paththm}, $\tau \le CN^2$.

Next, let us prove the required lower bound on $\tau$. If $p_j=p_{-j}$ for some $1\le j\le d$, define $f:(\Z/N\Z)^d\to \C$ as $f(x) := e^{2\pi\I x_j/N}$. Then note that $\mu f = 0$, $\|f\|=1$ and 
\begin{align*}
Pf(x) &= (1-2p_j) f(x) + p_j(e^{2\pi\I (x_j+1)/N} + e^{2\pi\I (x_j-1)/N}) \\
&= (1-2p_j + 2p_j\cos(2\pi/N))f(x),
\end{align*}
which implies that $\|(I-P)f\| = O(N^{-2})$. Thus, $\tau\ge CN^2$. Next, suppose that  $p_j\ne p_{-j}$ for all $1\le j\le d$ and $p_j-p_{-j}\in \mathbb{Q}$ for at least two $j$'s. Then there exist integers $m_1,\ldots,m_d$, not all zero, such that 
\begin{align}\label{mipi}
\sum_{j=1}^d m_j(p_j-p_{-j}) = 0.
\end{align}
Define
\begin{align}\label{fdef}
f(x) := \exp\biggl(\frac{2\pi \I}{N} \sum_{j=1}^d m_j x_j\biggr),
\end{align}
so that $\mu f = 0$ (since $m_j$ are not all zero) and $\|f\|=1$.
Then 
\begin{align}\label{pfform}
Pf (x) &= p_0 f(x) + \sum_{j=1}^d p_j f(x+e_j) + \sum_{j=1}^d p_{-j} f(x-e_j)\notag \\
&= \biggl(p_0 + \sum_{j=1}^d (p_j e^{2\pi \I m_j/N} + p_{-j} e^{-2\pi \I m_j/N}) \biggr)f(x).
\end{align}
Note that by \eqref{mipi},
\begin{align*}
&p_0 + \sum_{j=1}^d (p_j e^{2\pi \I m_j/N} + p_{-j} e^{-2\pi \I m_j/N}) \\
&=  p_0 + \sum_{j=1}^d \biggl[p_j \biggl(1+ \frac{2\pi \I m_j}{N} + O(N^{-2})\biggr) + p_{-j} \biggl(1- \frac{2\pi \I m_j}{N} + O(N^{-2})\biggr)\biggr]\\
&= 1 + \frac{2\pi\I}{N}\sum_{j=1}^d(p_j-p_{-j}) m_j + O(N^{-2}) = 1+O(N^{-2}). 
\end{align*}
Thus, $\|f - Pf\|=O(N^{-2})$, and hence, $\tau \ge CN^2$.

\subsection{Proof of Theorem \ref{genpithm}}\label{genpiproof}
We need the following lemma. The proof is based on a standard idea from Diophantine approximation using the pigeon hole principle.
\begin{lmm}\label{diolmm}
For any $n$, any $x_1,\ldots,x_n\in [-1,1]$ and any integer $L\ge 1$, there exist integers $k_1,\ldots,k_n\in [-2L, 2L]$, not all zero, such that $|k_1x_1+\cdots+k_nx_n|\le (2L)^{-n+1}$. 
\end{lmm}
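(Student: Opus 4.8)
For Lemma~\ref{diolmm}, the plan is to run the classical Dirichlet pigeonhole argument. Look at the $(2L+1)^n$ integer vectors $a=(a_1,\dots,a_n)$ with each $a_i\in\{0,1,\dots,2L\}$, and attach to each one the real number $\langle a,x\rangle:=a_1x_1+\cdots+a_nx_n$. Since every $|x_i|\le 1$, all of these numbers lie in a single bounded interval. First I would cover that interval by strictly fewer than $(2L+1)^n$ subintervals, each of length at most $(2L)^{-n+1}$; then by pigeonhole two distinct vectors $a\ne a'$ must fall into the same subinterval. Taking $k:=a-a'$ finishes the proof: $k\ne 0$, every $k_i=a_i-a'_i$ lies in $\{-2L,\dots,2L\}$, and $|k_1x_1+\cdots+k_nx_n|=|\langle a,x\rangle-\langle a',x\rangle|\le (2L)^{-n+1}$.

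The quantitative heart of the argument is checking that the number of pigeonholes really is smaller than $(2L+1)^n$. The crude bound $(2L+1)^n-1\ge(2L)^{n-1}$ (expand the binomial) already leaves a lot of room, and the cleanest way to make the target exponent come out is either to pigeonhole the reductions of $\langle a,x\rangle$ modulo a suitably chosen integer of size of order $2L$, or — what amounts to the same thing — to invoke Minkowski's convex body theorem for the unimodular lattice $\{(k,\langle k,x\rangle):k\in\Z^n\}+\Z e_{n+1}\subset\R^{\,n+1}$ and the symmetric box $\{(v,t):\|v\|_\infty\le 2L,\ |t|\le(2L)^{-n+1}\}$, whose volume is $2^{n+2}L\ge 2^{n+1}$.

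I expect the only genuinely delicate point to be converting a collision into a bound on $|\langle k,x\rangle|$ itself, rather than merely on the distance from $\langle k,x\rangle$ to the nearest integer — the former is what the statement asks for, while a naive pigeonhole on fractional parts (and likewise the Minkowski argument just described) directly delivers the latter. This is exactly the step in which the hypothesis $x_i\in[-1,1]$ has to be used, to keep $\langle k,x\rangle$ inside a window narrow enough (relative to the modulus chosen, using $\|k\|_\infty\cdot\max_i|x_i|$ small) that the approximating integer is pinned to $0$. Once that bookkeeping is arranged, the rest is the mechanical count of pigeons against holes.
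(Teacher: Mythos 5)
Your opening move --- pigeonholing the actual values $\langle a,x\rangle$ over a box of side $2L+1$ and taking a difference --- is exactly what the paper does, and it is the right approach precisely because it never introduces an integer offset. Everything in your middle and final paragraphs about reducing modulo an integer, Minkowski's theorem, and then ``pinning the approximating integer to $0$'' is a detour that does not close. Those variants produce $|\langle k,x\rangle-m|\le(2L)^{-n+1}$ for some unknown integer $m$, and the fix you propose --- that $\|k\|_\infty\cdot\max_i|x_i|$ is small enough to force $m=0$ --- is simply false: that quantity can be as large as $2L$, and $|\langle k,x\rangle|$ itself can be as large as $2nL$, so the nearest integer is nowhere near pinned. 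As sketched, the Minkowski/mod-$M$ route does not prove the lemma; you should stay with the direct pigeonhole and never let the extraneous $m$ appear.

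But then the one step you correctly call ``the quantitative heart'' is the one you never actually carry out, and as written it fails. With $a_i\in\{0,\ldots,2L\}$ and $|x_i|\le1$ the sums $\langle a,x\rangle$ fill an interval of length about $4nL$, so covering it by intervals of length $(2L)^{-n+1}$ requires roughly $2n(2L)^n$ of them, which exceeds $(2L+1)^n$ once $L$ is large with $n$ fixed; the ``crude bound $(2L+1)^n-1\ge(2L)^{n-1}$'' you cite has nothing to do with this comparison. (The paper's own write-up has the same soft spot: with $a_i\in\{-L,\ldots,L\}$ it asserts the sums are ``all in $[-L,L]$,'' but they actually lie in $[-nL,nL]$.) What the direct pigeonhole honestly yields is $|\langle k,x\rangle|\le n(2L)^{-n+1}$ --- an extra dimension-dependent factor, not the bare $(2L)^{-n+1}$ --- and that is all Theorem~\ref{genpithm} uses, since there $n=d$ is held fixed while $L\sim N^{1/(d+1)}\to\infty$ and the factor $n$ is swallowed by the constants $C_i$. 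The right move is therefore to carry the direct pigeonhole through with the correct interval $[-nL,nL]$, state the conclusion with the factor $n$, and note that this suffices for the application.
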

\begin{proof}
Consider the numbers $a_1x_1+\cdots +a_nx_n$ where $a_1,\ldots,a_n$ range over all integers between $-L$ and $L$. These numbers are all in $[-L,L]$. There are $(2L+1)^n$ choices of $(a_1,\ldots,a_n)$, and $[-L,L]$ can be partitioned into $(2L)^n$ sub-intervals of length $(2L)^{-n+1}$. Therefore, by the pigeon hole principle, there exist distinct $(a_1,\ldots,a_n)$ and $(b_1,\ldots,b_n)$ such that $a_1x_1+\cdots+a_n x_n$ and $b_1x_1+\cdots+b_nx_n$ belong to the same sub-interval. Taking $k_i=a_i-b_i$ completes the proof.
\end{proof}

We now prove the claimed lower bound on the relaxation time using this lemma. By Lemma~\ref{diolmm}, given any integer $L\ge 1$, there exist integers $m_1,\ldots,m_d\in [-2L, 2L]$, not all zero, such that
\begin{align}\label{mchoice}
\biggl|\sum_{i=1}^d m_i(p_i-p_{-i}) \biggr|\le (2L)^{-d+1}. 
\end{align}
Fixing $L$ and choosing $m_1,\ldots,m_d$ as above, define $f:(\Z/N\Z)^d \to \C$ exactly as in \eqref{fdef}.  The formula \eqref{pfform} remains valid. Note that by \eqref{mchoice},
\begin{align*}
&p_0 + \sum_{j=1}^d (p_j e^{2\pi \I m_j/N} + p_{-j} e^{-2\pi \I m_j/N}) \\
&=  p_0 + \sum_{j=1}^d \biggl[p_j \biggl(1+ \frac{2\pi \I m_j}{N} + O(L^2N^{-2})\biggr) + p_{-j} \biggl(1- \frac{2\pi \I m_j}{N} + O(L^2N^{-2})\biggr)\biggr]\\
&= 1 + \frac{2\pi\I}{N}\sum_{j=1}^d(p_j-p_{-j}) m_j + O(L^2N^{-2}) \\
&=  1 + O(L^{-d+1}N^{-1}) + O(L^2 N^{-2}). 
\end{align*}
Choosing $L$ of order $N^{1/(d+1)}$ gives $\|(I-P)f\|=O(N^{-2d/(d+1)})$, completing the proof of the lower bound on the relaxation time.

Let us now prove the upper bound. We need the following lemmas.
\begin{lmm}\label{prandlmm}
Choose $(p_i)_{-d\le i\le d}$ uniformly at random from the unit simplex in $\R^{2d+1}$. Then for any $m_{1},\ldots, m_{d}\in \Z$ and any $\ve>0$, 
\[
\P(|m_1(p_1-p_{-1}) + \cdots +m_d(p_d - p_{-d})|\le \ve) \le \frac{C\ve}{\max_{1\le i\le d}|m_i|},
\]
where $C$ depends only on $d$.
\end{lmm}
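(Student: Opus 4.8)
The plan is to reduce the estimate to a one-dimensional problem by conditioning on all but one of the probabilities. By relabeling the pairs $(p_j,p_{-j})$ — a symmetry of the uniform measure on the simplex — I may assume $M := |m_1| = \max_{1\le i\le d}|m_i|$; if all $m_i$ vanish there is nothing to prove, so $M\ge 1$. Set $W := \sum_{i=1}^d m_i(p_i-p_{-i})$, the quantity whose concentration must be controlled.

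First I would use the $2d$ coordinates $(p_i)_{i\ne 0}$ to identify the uniform law on the simplex with Lebesgue measure on $\{p_i\ge 0,\ \sum_{i\ne 0}p_i\le 1\}$, and note that $W$ does not involve $p_0$. I then condition on the $2d-1$ coordinates $(p_i)_{i\ne 0,1}$: since the uniform law on a convex body, conditioned on all-but-one coordinate, is uniform on the corresponding fibre, the conditional law of $p_1$ is $\mathrm{Unif}[0,u]$ with $u := 1-\sum_{i\ne 0,1}p_i = p_0+p_1$. As $W$ is an affine function of $p_1$ with slope $m_1$, the set $\{p_1:|W|\le\ve\}$ is an interval of length $2\ve/M$, whence
\[
\P\bigl(|W|\le\ve \,\big|\, (p_i)_{i\ne 0,1}\bigr) \le \frac{2\ve}{Mu}.
\]

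It remains to take expectations and bound $\E[1/u]$ by a constant depending only on $d$. This is the one point that needs care: $u=p_0+p_1$ is a sum of two coordinates of a Dirichlet$(1,\dots,1)$ vector with $2d+1$ parameters, hence $u\sim\mathrm{Beta}(2,2d-1)$, with density proportional to $x(1-x)^{2d-2}$ on $[0,1]$. The crucial feature is that this density vanishes \emph{linearly} at $0$ — it is a $\mathrm{Beta}(2,\cdot)$, not a $\mathrm{Beta}(1,\cdot)$ — so the factor $x$ exactly cancels the singularity of $1/x$ and $\E[1/u]=2d$. (Had $u$ merely had bounded density near $0$, the same computation would only give the weaker bound $O(\ve\log(M/\ve)/M)$, which is why this observation is the heart of the argument.) Combining,
\[
\P(|W|\le\ve) = \E\bigl[\P(|W|\le\ve\mid (p_i)_{i\ne 0,1})\bigr] \le \frac{2\ve}{M}\,\E[1/u] = \frac{4d\,\ve}{M},
\]
which is the asserted inequality with $C=4d$. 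In short, the only real obstacle is the conditioning bookkeeping — checking that conditioning the uniform law on all-but-one of the free coordinates is uniform on the fibre $[0,u]$, and identifying the marginal of $u$ so that $\E[1/u]<\infty$ — and neither step is difficult.
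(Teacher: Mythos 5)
Your proof is correct, and it takes a genuinely different route from the paper's. The paper argues in two steps: it observes that the random vector $(p_i - p_{-i})_{1\le i\le d}$ has a bounded $d$-dimensional density with respect to Lebesgue measure on $\R^d$, and then bounds the volume of the slab $\{x\in[-1,1]^d : |\sum m_i x_i|\le\ve\}$ by $C\ve/\max_i|m_i|$; the product of these two facts gives the lemma directly. Your argument instead conditions on all coordinates except $p_1$ (after relabelling so that $|m_1|=\max_i|m_i|$), reduces to a one-dimensional uniform-on-a-fibre estimate giving a conditional bound $2\ve/(M u)$ with $u=p_0+p_1$, and then identifies $u\sim\mathrm{Beta}(2,2d-1)$ to compute $\E[1/u]=2d$ exactly. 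The comparison is instructive: the paper's density-times-volume argument sidesteps the $1/u$ singularity entirely by staying $d$-dimensional, so no fine cancellation is needed; your conditioning argument is more explicit and yields a clean constant $C=4d$, but — as you correctly note — it hinges on the fact that $u$ is a sum of \emph{two} Dirichlet coordinates (a $\mathrm{Beta}(2,\cdot)$ with density vanishing linearly at $0$), since a $\mathrm{Beta}(1,\cdot)$ would leave a $\log(M/\ve)$ factor. That said, the caveat is specific to your route: it is not "the heart of the argument'' in the paper's proof, where it never arises because the bounded joint density already packages it. Both proofs are sound; yours trades geometric clean-ness for an explicit constant and a transparent probabilistic mechanism.
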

\begin{proof}
It is not hard to show that the random vector $(p_{i} - p_{-i})_{1\le i\le d}$ has a bounded probability density with respect to Lebesgue measure on $\R^{d}$. Also, it is not hard to see that given $m_1,\ldots,m_d\in \Z$, the set 
\[
\{(x_i)_{1\le i\le d}: |x_i|\le 1 \text{ for all } i \text{ and } |m_1x_1+\cdots+m_d x_d|\le \ve\}
\]
has volume at most $C\ve /\max_{1\le i\le d}|m_i|$. The claim follows from these two observations.
\end{proof}
\begin{lmm}\label{bcconseqlmm}
Let $(p_i)_{-d\le i\le d}$ be as in Lemma \ref{prandlmm}. Then almost surely, there is some $C>0$ such that  for all $m_1,\ldots, m_d\in \Z$ that are not all zero,
\begin{align}\label{mproperty}
|m_1(p_1-p_{-1}) + \cdots +m_d(p_d - p_{-d})| > C\psi(\max_{1\le i\le d}|m_i|),
\end{align}
where $\psi(x):= x^{-d+1}(\log (1+x))^{-2}$. 
\end{lmm}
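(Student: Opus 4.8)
The plan is to combine the Borel--Cantelli lemma with a \emph{dyadic} decomposition of the range of $\max_{1\le i\le d}|m_i|$. Write $q_i := p_i-p_{-i}$, so the quantity to be bounded below is $|\sum_i m_i q_i|$. A naive union bound over all integer tuples $m=(m_1,\ldots,m_d)$ with $\max_i|m_i|=M$ would, via Lemma \ref{prandlmm}, give a bad-event probability of order $M^{d-1}\psi(M) = (\log(1+M))^{-2}$, and $\sum_M(\log(1+M))^{-2}=\infty$. Grouping the tuples into dyadic blocks $2^k\le \max_i|m_i|<2^{k+1}$ converts this into a convergent series; this is precisely why $\psi$ carries the factor $(\log(1+x))^{-2}$ (any power of the logarithm strictly exceeding $1$ would serve).

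Concretely, for each integer $k\ge 0$ I would set
\[
B_k := \Bigl\{\, \exists\, m\in\Z^d,\ 2^k\le \max_{1\le i\le d}|m_i| < 2^{k+1},\ \Bigl|\textstyle\sum_{i=1}^d m_i q_i\Bigr|\le \psi(2^{k+1}) \,\Bigr\}.
\]
There are at most $(2^{k+2})^d \le C\,2^{kd}$ tuples with $\max_i|m_i|<2^{k+1}$, and for each such $m$ Lemma \ref{prandlmm} gives $\P(|\sum_i m_i q_i|\le \psi(2^{k+1}))\le C\psi(2^{k+1})/\max_i|m_i|\le C\,2^{-k}\psi(2^{k+1})$, using $\max_i|m_i|\ge 2^k$. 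A union bound yields $\P(B_k)\le C\,2^{k(d-1)}\psi(2^{k+1})$, and substituting $\psi(x)=x^{-d+1}(\log(1+x))^{-2}$ gives $2^{k(d-1)}\psi(2^{k+1}) = 2^{-(d-1)}(\log(1+2^{k+1}))^{-2}\le C(k+1)^{-2}$. Hence $\sum_{k\ge 0}\P(B_k)<\infty$, and Borel--Cantelli shows that almost surely only finitely many $B_k$ occur.

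It remains to convert this into the stated inequality. One checks elementarily that, since $\psi$ is decreasing and $\psi(2^k)/\psi(2^{k+1})$ is bounded above uniformly in $k$, there is a constant $c=c(d)>0$ with $\psi(2^{k+1})\ge c\,\psi(M)$ whenever $2^k\le M<2^{k+1}$. On the complement of $B_k$, every $m$ with $\max_i|m_i|$ in the $k$-th dyadic block satisfies $|\sum_i m_i q_i|>\psi(2^{k+1})\ge c\,\psi(\max_i|m_i|)$. Since almost surely $B_k$ fails for all $k\ge K$ for some random $K$, this covers all $m$ with $\max_i|m_i|\ge 2^K$. For the finitely many nonzero $m$ with $\max_i|m_i|<2^K$, I would invoke the fact used in the proof of Lemma \ref{prandlmm} that $(q_i)_{1\le i\le d}$ has a density, so that almost surely $\sum_i m_i q_i\ne 0$ for every nonzero integer tuple (a countable union of null events); the minimum of $|\sum_i m_i q_i|/\psi(\max_i|m_i|)$ over this finite set is then almost surely a positive number $c_1$, and $C:=\min(c,c_1)$ works.

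The only genuine subtlety is realizing that the union bound must be organized dyadically rather than one value of $\max_i|m_i|$ at a time; given that, the remaining ingredients --- counting integer tuples in a box, the size estimates on $\psi$, and the measure-zero argument for the finitely many small $m$ --- are all routine.
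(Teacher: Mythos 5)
Your proof works, but the dyadic decomposition is an unnecessary complication arising from an arithmetic slip in your first paragraph, and the paper takes the direct route you dismissed.

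You wrote that the ``naive'' union bound at fixed $M := \max_i|m_i|$ gives a bad-event probability of order $M^{d-1}\psi(M) = (\log(1+M))^{-2}$, whose sum over $M$ diverges. But you dropped the $/\max_i|m_i|$ factor from Lemma \ref{prandlmm}: for each tuple $m$ with $\max_i|m_i|=M$, the lemma gives $\P(|\sum_i m_i q_i|\le\psi(M))\le C\psi(M)/M$, not $C\psi(M)$. Multiplying by the $O(M^{d-1})$ tuples gives $O(M^{d-2}\psi(M)) = O\bigl(M^{-1}(\log(1+M))^{-2}\bigr)$, which \emph{is} summable in $M$. (You correctly retain the $/\max_i|m_i|$ factor two paragraphs later, which is why your dyadic version also converges; but had you kept it in the first place, you would have seen there was nothing to fix.) The paper's proof is precisely this naive bound: it defines $\phi(x):=x^{-1}\psi(x)=x^{-d}(\log(1+x))^{-2}$, bounds $\P(E_{m_1,\ldots,m_d})\le C\phi(\max_i|m_i|)$ directly from Lemma \ref{prandlmm}, and notes that $\sum_{m\in\Z^d\setminus\{0\}}\phi(\max_i|m_i|)<\infty$, then invokes Borel--Cantelli. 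Your dyadic blocking gives a correct proof too, just a longer one, and it also needs the extra (routine but nontrivial) observation that $\psi(2^{k+1})\ge c\,\psi(M)$ across a dyadic block. One small point in your favor: you explicitly handle the finitely many exceptional $m$ via the absolute continuity of $(q_i)$ --- that $\sum_i m_i q_i\ne 0$ a.s. for every fixed nonzero integer tuple --- whereas the paper leaves this step implicit in the phrase ``choosing $C$ sufficiently small.''
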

\begin{proof}
For $m_1,\ldots,m_d\in \Z$ such that not all are zero, define the event
\[
E_{m_1,\ldots,m_d} := \{ |m_1(p_1-p_{-1}) + \cdots +m_d(p_d - p_{-d})| \le \psi(\max_{1\le i\le d} |m_i|)\},
\]
Let $\phi(x):= x^{-1}\psi(x) = x^{-d}(\log (1+x))^{-2}$. Then by Lemma \ref{prandlmm},
\begin{align*}
\sum_{(m_1,\ldots,m_d)\in \Z^d\setminus\{(0,\ldots,0)\}} \P(E_{m_1,\ldots,m_d}) &\le C\sum_{(m_1,\ldots,m_d)\in \Z^d\setminus\{(0,\ldots,0)\}} \phi(\max_{1\le i\le d} |m_i|)
\end{align*}
for some constant $C$ that depends only on $d$. It is easy to see that the sum on the right is finite. Therefore, by the Borel--Cantelli lemma, the events $E_{m_1,\ldots,m_d}$ can happen for only finitely many $(m_1,\ldots,m_d)$ with probability one. Thus, by choosing $C$ sufficiently small in the statement of the lemma (depending on the particular realization of the $p_i$'s), we arrive at the desired conclusion.
\end{proof}

We are now ready to complete the proof of the upper bound in Theorem~\ref{genpithm}. Note that by Lemma~\ref{bcconseqlmm}, almost all $(p_i)_{-d\le i\le d}$ satisfy the condition stated in the lemma. Take any such $(p_i)_{-d\le i\le d}$, and additionally assume that the $p_i$'s are all nonzero. Throughout the remainder of this proof, $C,C_1,C_2,\ldots$ will denote positive constants that depend only on $d$ and the $p_i$'s, whose values may change from line to line.

It is easy to check that the transition matrix for this problem is a normal matrix with respect to the ordinary inner product, and that its eigenvalues are 
\begin{align*}
\lambda_{m_1,\ldots,m_d} := p_0 + \sum_{j=1}^d (p_j e^{2\pi\I m_j/N} + p_{-j} e^{-2\pi\I m_j/N}),
\end{align*}
where $m_1,\ldots,m_d$ range in $\{0,1,\ldots,N-1\}$. By Theorem \ref{revnormthm}, the spectral gap of the chain (according to our definition) is given by
\begin{align*}
\gamma = \min_{\substack{0\le m_1,\ldots,m_d\le N-1,\\ \text{at least one $m_i\ne 0$}}}|1 - \lambda_{m_1,\ldots,m_d}|.
\end{align*}
Since $e^{\pm 2\pi\I m_j/N} = e^{\pm 2\pi\I (N-m_j)/N}$, we get the same value of $\gamma$ if we take  minimum over $0\le m_1,\ldots,m_d\le N/2$, not all zero. Next, let $\delta\in (0,1/2)$ be a number that we will choose later. Note that $\lambda_{m_1,\ldots,m_d}$ is a weighted average of points on the unit circle. Since the $p_i$'s are all nonzero, this shows that if some $m_j$ is bigger than $\delta N$ but less than or equal to $N/2$, then $|1-\lambda_{m_1,\ldots,m_d}|\ge C(\delta)$, where $C(\delta)$ is a positive constant that depends only on $\delta$, $d$, and the $p_i$'s. Thus, to show that $\gamma \ge N^{-2d/(d+1)+o(1)}$ as $N\to\infty$, it is sufficient to show that $\gamma' \ge N^{-2d/(d+1)+o(1)}$, where 
\[
\gamma' := \min_{\substack{0\le m_1,\ldots,m_d\le \delta N,\\ \text{at least one $m_i\ne 0$}}}|1 - \lambda_{m_1,\ldots,m_d}|.
\]
Take any $m_1,\ldots,m_d$ as in the above minimum. Observe that 
\begin{align*}
1 - \lambda_{m_1,\ldots,m_d}&= -\frac{2\pi \I}{N}\sum_{j=1}^d (p_j - p_{-j})m_j + \frac{2\pi^2}{N^2} \sum_{j=1}^d (p_j +p_{-j})m_j^2 + R_{m_1,\ldots,m_d}, 
\end{align*}
where 
\[
|R_{m_1,\ldots,m_d}| \le\frac{(2\pi)^3}{6N^3} \sum_{j=1}^d(p_j+p_{-j}) |m_j|^3\le \frac{(2\pi)^3\delta}{6N^2} \sum_{j=1}^d(p_j+p_{-j}) m_j^2,
\]
because the absolute value of the third derivative of $x \mapsto e^{\I x}$ is bounded by $1$ everywhere on the real line. Thus,
\begin{align}\label{mainlambda}
|1 - \lambda_{m_1,\ldots,m_d}| &\ge \biggl|-\frac{2\pi \I}{N}\sum_{j=1}^d (p_j - p_{-j})m_j + \frac{2\pi^2}{N^2} \sum_{j=1}^d (p_j +p_{-j})m_j^2\biggr| - |R_{m_1,\ldots,m_d}|\notag \\
&= \biggl[\biggl(\frac{2\pi}{N}\sum_{j=1}^d (p_j - p_{-j})m_j\biggr)^2 + \biggl(\frac{2\pi^2}{N^2} \sum_{j=1}^d (p_j +p_{-j})m_j^2\biggr)^2\biggr]^{1/2}\notag \\
&\qquad \qquad  - \frac{(2\pi)^3\delta}{6N^2} \sum_{j=1}^d(p_j+p_{-j}) m_j^2. 
\end{align}
Now, if 
\begin{align}\label{longineq}
\biggl|\frac{2\pi}{N}\sum_{j=1}^d (p_j - p_{-j})m_j\biggr| \ge \biggl|\frac{2\pi^2}{N^2} \sum_{j=1}^d (p_j +p_{-j})m_j^2\biggr|, 
\end{align}
then \eqref{mainlambda} shows that 
\begin{align*}
|1 - \lambda_{m_1,\ldots,m_d}| &\ge \biggl|\frac{2\pi}{N}\sum_{j=1}^d (p_j - p_{-j})m_j\biggr| - \frac{(2\pi)^3\delta}{6N^2} \sum_{j=1}^d(p_j+p_{-j}) m_j^2\\
&\ge \biggl|\frac{\pi}{N}\sum_{j=1}^d (p_j - p_{-j})m_j\biggr| + \frac{C(1-\delta)}{N^2} \sum_{j=1}^d(p_j+p_{-j}) m_j^2,
\end{align*}
provided that $\delta$ is chosen to be smaller than a suitable universal constant. On the other hand, if the reverse inequality holds in \eqref{longineq}, then
\begin{align*}
|1 - \lambda_{m_1,\ldots,m_d}| &\ge  \biggl|\frac{2\pi^2}{N^2} \sum_{j=1}^d (p_j +p_{-j})m_j^2\biggr| - \frac{(2\pi)^3\delta}{6N^2} \sum_{j=1}^d(p_j+p_{-j}) m_j^2\\
&\ge \frac{C(1-\delta)}{N^2} \sum_{j=1}^d(p_j+p_{-j}) m_j^2\\
&\ge  \biggl|\frac{C_1}{N}\sum_{j=1}^d (p_j - p_{-j})m_j\biggr| + \frac{C_2}{N^2} \sum_{j=1}^d(p_j+p_{-j}) m_j^2,
\end{align*}
again provided that $\delta$ is sufficiently small. Thus, with a small enough choice of $\delta$, we have 
\begin{align}\label{rweig}
\gamma' \ge \min_{\substack{0\le m_1,\ldots,m_d\le \delta N,\\ \text{at least one $m_i\ne 0$}}} \biggl( \biggl|\frac{C_1}{N}\sum_{j=1}^d (p_j - p_{-j})m_j\biggr| + \frac{C_2}{N^2} \sum_{j=1}^d(p_j+p_{-j}) m_j^2\biggr). 
\end{align}
Take any $m_1,\ldots,m_d$ as in the above minimum. Let $L := \max_{1\le i\le d} |m_i|$. Then by \eqref{mproperty}, 
\begin{align*}
\biggl|\frac{C_1}{N}\sum_{j=1}^d (p_j - p_{-j})m_j\biggr| + \frac{C_2}{N^2} \sum_{j=1}^d(p_j+p_{-j}) m_j^2&\ge \frac{C_3 \psi(L)}{N} + \frac{C_4 L^2}{N^2}.
\end{align*}
Using the fact that $1\le L\le \delta N$, it is not hard to verify that right side is bounded below by $N^{-2d/(d+1) + o(1)}$, irrespective of the value of $L$. This completes the proof.

\subsection{Proof of Theorem \ref{explicitthm}}\label{explicitproof}
In this proof, $C, C_1, C_2,\ldots$ will denote positive constants that depend only on $\alpha$, whose values may change from line to line. The required upper bound on the spectral gap is already available from Theorem \ref{genpithm}. So we only have to prove the required lower bound. By the inequality \eqref{rweig}, it suffices to show that 
\begin{align}\label{lasttoshow}
\min_{\substack{0\le k,m\le N-1,\\k\ne 0 \text{ or } m\ne 0}} \biggl(\biggl|\frac{1}{N}(\alpha k + (1-\alpha)m)\biggr| + \frac{1}{N^2}(\alpha k^2 + (1-\alpha) m^2)\biggr) \ge CN^{-4/3},
\end{align}
where $C$ is a positive constant that does not depend on $N$. Since $\alpha$ solves a quadratic equation with integer coefficients, we can write
\[
\alpha = \frac{a + s\sqrt{b}}{c}
\]
where $a,b,c\in \Z$, $s\in\{-1,1\}$, $b\ge 0$, and $c\ne 0$. Since $\alpha$ is irrational, $b$ is not a perfect square. Take any two integers $p,q$, at least one of which is nonzero. Since $b$ is not a perfect square and $p,q$ are integers which are not both zero, we must have $|q^2 b - p^2|\ge 1$.
But
\begin{align*}
|q^2b - p^2| &= |q\sqrt{b} - p||q\sqrt{b}+p| \le C |q \sqrt{b} - p| (|p|+|q|). 
\end{align*}
Thus, 
\begin{align}\label{pqlower}
|q \sqrt{b}-p|\ge \frac{C}{|p|+|q|}.
\end{align}
Now, for any $k,m\in \Z$, 
\begin{align*}
|\alpha k + (1-\alpha)m| &= \biggl|\frac{a+s\sqrt{b}}{c} (k-m) + m\biggr|\\
&= \frac{|s(k-m)\sqrt{b} + a(k-m) + mc|}{|c|}.
\end{align*}
If at least one of $k,m$ is nonzero, then it is easy to see that at least one of $s(k-m)$ and $a(k-m)+mc$ is nonzero. Thus, by \eqref{pqlower}, we get
\[
|\alpha k + (1-\alpha)m| \ge \frac{C_1}{|k-m| + |m|} \ge \frac{C_2}{|k|+|m|}.
\]
Thus, we have
\begin{align*}
&\biggl|\frac{1}{N}(\alpha k + (1-\alpha)m)\biggr| + \frac{1}{N^2}(\alpha k^2 + (1-\alpha) m^2) \\
&\ge \frac{C_1}{N(|k|+|m|)} + \frac{C_2(|k|+|m|)^2}{N^2}\\
&\ge C_3 \biggl(\frac{1}{N(|k|+|m|)}\biggr)^{2/3} \biggl(\frac{(|k|+|m|)^2}{N^2}\biggr)^{1/3} = \frac{C_3}{N^{4/3}},
\end{align*}
where the second step follows by Young's inequality. This proves \eqref{lasttoshow}, completing the proof of the theorem.

\subsection{Proof of Theorem \ref{explicitthm2}}\label{explicit2proof}
In this proof, $C, C_1, C_2,\ldots$ will denote positive constants that depend only on $\alpha$ and $\ve$, whose values may change from line to line. 
As in the proof of Theorem \ref{explicitthm}, it suffices to show that 
\begin{align*}
\min_{\substack{0\le k,m\le N-1,\\k\ne 0 \text{ or } m\ne 0}} \biggl(\biggl|\frac{1}{N}(\alpha k + (1-\alpha)m)\biggr| + \frac{1}{N^2}(\alpha k^2 + (1-\alpha) m^2)\biggr) \ge CN^{-4/3-\ve}.
\end{align*}
Without loss of generality, let us assume that $\ve \in (0,2/3)$. Let $\ve'$ solve
\[
\frac{4+2\ve'}{3+\ve'} = \frac{4}{3}+\ve.
\]
Using the assumption that $\ve \in (0,2/3)$, is easy to check that $\ve' >0$. Thus, by Roth's theorem about rational approximations of algebraic numbers~\cite{roth55}, there is a constant $C$ depending only on $\alpha$ and $\ve$ such that any two coprime integers $p$ and $q$ with $q\ne0$, we have 
\[
\biggl|\alpha - \frac{p}{q}\biggr| > \frac{C}{|q|^{2+\ve'}}. 
\]
This shows that if $0\le k,m\le N-1$ with $k\ne m$, then 
\begin{align*}
\biggl|\alpha - \frac{-m}{k-m}\biggr| > \frac{C}{|k-m|^{2+\ve'}}. 
\end{align*}
This can be rewritten as 
\begin{align*}
|\alpha k + (1-\alpha)m| &> \frac{C}{|k-m|^{1+\ve'}}\ge \frac{C_1}{(|k|+|m|)^{1+\ve'}}. 
\end{align*}
Note that we proved the above inequality under the assumption that $k\ne m$. But it is easy to see that it holds even if $k=m> 0$. Thus, by Young's inequality, we have that whenever $0\le k,m\le N-1$ and at least one of $k,m$ is nonzero,
\begin{align*}
&\biggl|\frac{1}{N}(\alpha k + (1-\alpha)m)\biggr| + \frac{1}{N^2}(\alpha k^2 + (1-\alpha) m^2) \\
&\ge \frac{C_1}{N(|k|+|m|)^{1+\ve'}} + \frac{C_2(|k|+|m|)^2}{N^2}\\
&\ge C_3 \biggl(\frac{1}{N(|k|+|m|)^{1+\ve'}}\biggr)^{2/(3+\ve')} \biggl(\frac{(|k|+|m|)^2}{N^2}\biggr)^{(1+\ve')/(3+\ve')} \\
&= \frac{C_3}{N^{(4+2\ve')/(3+\ve')}} = C_3 N^{-4/3 - \ve}.
\end{align*}
This completes the proof.

\subsection{Proof of Theorem \ref{cdgthm}}\label{cdgproof}
Note that for any $n$,
\begin{align*}
X_n &= 2^n X_0 + 2^{n-1}\ve_1+\cdots +2\ve_{n-1} +\ve_n \bmod N. 
\end{align*}
Thus, for any $0\le j\le N-1$, 
\begin{align}\label{fourier}
\E(e^{2\pi \I j X_n/N}|X_0=x) &= e^{2\pi\I j 2^nx/N}\prod_{k=1}^n \E(e^{2\pi \I j 2^{n-k}\ve_k/N})\notag \\
&= e^{2^{n+1}\pi\I j x/N} \prod_{k=1}^n \frac{1+2\cos(2^{n-k+1}\pi j/N)}{3} \notag \\
&= e^{2^{n+1}\pi\I jx/N} \prod_{k=1}^{n} \frac{1+2\cos(2^k\pi j/N)}{3}. 
\end{align}
Now, the functions $x\mapsto e^{2\pi \I jx/N}$, for $j=0,1,\ldots,N-1$, form an orthonormal basis of $L^2(\mu)$, where $\mu$ is the uniform distribution on $\Z/N\Z$. Thus, any $f:\ms \to \C$ with $\mu f=0$   can be expressed as
\[
f(x) = \sum_{j=1}^{N-1} \alpha_j e^{2\pi \I jx/N}
\]
for some coefficients $\alpha_1,\ldots,\alpha_{N-1}$ such that
\[
\sum_{j=1}^{N-1} |\alpha_j|^2 = \|f\|^2,
\]
where $\|\cdot\|$ denote the norm induced by the inner product \eqref{ipdef}. Using this and \eqref{fourier}, we get
\begin{align}\label{pnf}
P^n f(x) &= \sum_{j=1}^{N-1}\alpha_j  e^{2^{n+1}\pi\I jx/N}\biggl( \prod_{k=1}^{n} \frac{1+2\cos(2^k\pi j/N)}{3}\biggr). 
\end{align}
Since $N$ is an odd prime, it follows that for any $1\le j\ne k\le N-1$, 
\[
2^{n+1}(j-k)\ne 0\bmod N,
\]
and thus, the functions $e^{2^{n+1}\pi\I jx/N}$ and $e^{2^{n+1}\pi\I kx/N}$ are orthogonal to each other. Therefore, by~\eqref{pnf}, 
\begin{align}\label{pnfourier}
\|P^n f \|^2 &= \sum_{j=1}^{N-1}|\alpha_j|^2 \biggl( \prod_{k=1}^{n} \frac{1+2\cos(2^k\pi j/N)}{3}\biggr)^2.
\end{align}
Now we need the following lemmas. Throughout, $x \bmod N$ will denote the remainder of an integer $x$ modulo $N$, as an element of $\{0,1,\ldots,N-1\}$. 
\begin{lmm}\label{periodlmm}
Take any $1\le j\le N-1$, and any $k\ge 1$ such that $2^k j \bmod N \in [1, N/3) \cup (2N/3, N-1)$. Then there exists some $1\le l \le \log_2 N$ such that $2^{k+l} j\bmod N \in [N/3, 2N/3]$, where $\log_2 N$ is the logarithm of $N$ in base $2$. 
\end{lmm}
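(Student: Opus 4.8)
The plan is to follow the single quantity $a := 2^k j \bmod N$ under repeated doubling modulo $N$ and show it enters the middle third within $\log_2 N$ steps. Since $2^{k+l}j \equiv 2^l a \pmod N$, it is enough to produce an integer $l$ with $1 \le l \le \log_2 N$ and $2^l a \bmod N \in [N/3, 2N/3]$. I would first dispose of $N = 3$, where the hypothesis is vacuous since $[1, N/3) \cup (2N/3, N-1) = \emptyset$; so from now on $N \ge 5$, and as $N$ is prime, $3 \nmid N$, so $N/3$ and $2N/3$ are not integers --- this lets me ignore all endpoint issues.

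The core case is $a \in [1, N/3)$. Viewing the iterates $a, 2a, 4a, \dots$ as genuine integers (not residues), they increase to infinity, so I let $l$ be the least positive integer with $2^l a \ge N/3$; such an $l$ exists, and $l \ge 1$ precisely because $a < N/3$. Minimality forces $2^{l-1} a < N/3$, hence $2^l a < 2N/3 < N$, so no wraparound has occurred yet and $2^l a \bmod N = 2^l a$, which lies in $(N/3, 2N/3) \subseteq [N/3, 2N/3]$. For the length bound, $2^{l-1} a < N/3$ together with $a \ge 1$ gives $2^{l-1} < N/3$, so $l - 1 < \log_2 N - \log_2 3 < \log_2 N - 1$, i.e. $l < \log_2 N$.

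For the complementary case $a \in (2N/3, N-1)$ I would reduce to the previous one by symmetry: $b := N - a$ lies in $(1, N/3) \subseteq [1, N/3)$, so the core case supplies an integer $l$ with $1 \le l < \log_2 N$ and $2^l b \in (N/3, 2N/3)$ (here $2^l b$ equals its own residue since it is $< N$). Then $2^l a \equiv -2^l b \pmod N$, and since $0 < 2^l b < N$ we get $2^l a \bmod N = N - 2^l b \in (N/3, 2N/3) \subseteq [N/3, 2N/3]$, as needed.

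I do not expect a genuine obstacle here; the argument is elementary. The two places that need a moment of care are: checking that the minimal $l$ in the core case really stays below $\log_2 N$ --- this is exactly where the slack $\log_2 3 > 1$ is spent --- and confirming that the iterate has not wrapped past $N$ at the moment it first reaches $N/3$, which is ensured by the minimality inequality $2^{l-1} a < N/3$.
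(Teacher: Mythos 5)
Your argument is correct and is essentially the paper's own proof: set $a := 2^k j \bmod N$, in the case $a < N/3$ take $l$ minimal with $2^l a \ge N/3$ (minimality giving $2^l a < 2N/3 < N$ so no wraparound), and reduce the case $a > 2N/3$ to the first via $b := N - a$. Your extra remarks — disposing of $N=3$, noting $3 \nmid N$ kills endpoint issues, and spelling out why $l < \log_2 N$ — are small elaborations of steps the paper treats as immediate, not a different route.
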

\begin{proof}
Let $a:= 2^k j \bmod N$. Suppose that $a < N/3$. Let $l$ be the smallest positive integer such that $2^l a \ge N/3$. Clearly, $l\le \log_2 N$. Since $2^{l-1} a < N/3$, we have $2^la < 2N/3$. Thus, $2^{k+l} j \bmod N = 2^l a\bmod N\in [N/3, 2N/3]$. Next suppose that $a> 2N/3$. Let $b := N-a$, so that $b\in [1, N/3)$. By the previous step, there is some $l\le \log_2N$ such that $2^l b\bmod N \in [N/3, 2N/3]$. This implies that $N - (2^l b\mod N) \in [N/3, 2N/3]$. But
\begin{align*}
N - (2^l b\bmod N) \equiv -2^l b \bmod N \equiv 2^l a \bmod N.
\end{align*}
Thus, $2^{k+l} j \bmod N\in [N/3, 2N/3]$. 
\end{proof}
\begin{lmm}\label{countlmm}
Take any $1\le j\le N-1$ and any $n\ge 1$. There are at least $\lfloor n/(2+\log_2 N) \rfloor$ elements $k\in \{1,\ldots,n\}$ such that $2^k j \bmod N \in [N/3,2N/3]$.
\end{lmm}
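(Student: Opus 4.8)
\textbf{Proof proposal for Lemma \ref{countlmm}.}
The plan is to feed Lemma \ref{periodlmm} into a pigeonhole/partition argument: that lemma says a run of consecutive indices $k$ cannot avoid the ``central band'' $[N/3,2N/3]$ for more than $\log_2 N$ steps, so a positive fraction of the indices in $\{1,\dots,n\}$ must fall in the band. Concretely, fix $j$ and call an index $k\ge 1$ \emph{good} if $2^k j \bmod N\in[N/3,2N/3]$, and \emph{bad} otherwise. Since $N$ is prime and $1\le j\le N-1$, we have $2^k j\not\equiv 0\pmod N$, so $2^k j\bmod N\in\{1,\dots,N-1\}$; hence a bad $k$ satisfies exactly the hypothesis of Lemma \ref{periodlmm}, which then produces an integer $l$ with $1\le l\le\lfloor\log_2 N\rfloor$ such that $k+l$ is good.

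First I would record the consequence: with $L:=1+\lfloor\log_2 N\rfloor$, every block of $L$ consecutive positive integers $\{k,k+1,\dots,k+L-1\}$ contains at least one good index. Indeed, if $k$ is good we are done, and otherwise the index $k+l$ supplied by Lemma \ref{periodlmm} is good and satisfies $k+l\le k+\lfloor\log_2 N\rfloor=k+L-1$, so it lies in the block. Next, partition $\{1,\dots,n\}$ into $\lfloor n/L\rfloor$ disjoint blocks of $L$ consecutive integers (block $i$ being $\{(i-1)L+1,\dots,iL\}$), plus a possible leftover block of length less than $L$. Each of the $\lfloor n/L\rfloor$ full blocks contains a good index, and these are distinct, so the number of good $k$ in $\{1,\dots,n\}$ is at least $\lfloor n/L\rfloor$. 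Finally, since $L=1+\lfloor\log_2 N\rfloor\le 1+\log_2 N\le 2+\log_2 N$, we get $\lfloor n/L\rfloor\ge\lfloor n/(2+\log_2 N)\rfloor$, which is the claimed bound.

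There is no genuine obstacle here; the only point requiring care is an edge effect. If one applied Lemma \ref{periodlmm} to an arbitrary bad index near the right end of $\{1,\dots,n\}$, the good index it produces could exceed $n$ and be useless. This is precisely why the block length is taken to be $1+\lfloor\log_2 N\rfloor$: applying the lemma to the \emph{first} element of a full block, the good index it produces is guaranteed to land inside that same block, hence inside $\{1,\dots,n\}$. (This lemma will subsequently be combined with \eqref{pnfourier} and the bound $|1+2\cos\theta|/3\le 2/3$ valid for $\theta$ with $|\cos\theta|\le 1/2$, which applies to each good index, to conclude $\|P^nf\|\le (2/3)^{\lfloor n/(2+\log_2 N)\rfloor}\|f\|$.)
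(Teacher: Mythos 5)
Your proof is correct and follows essentially the same route as the paper: apply Lemma \ref{periodlmm} to the first index of each consecutive block of length roughly $\log_2 N$, observe that the good index it produces stays inside the same block, and count the disjoint blocks. Your bookkeeping is actually a bit cleaner than the paper's — you work directly with integer blocks of length $L = 1 + \lfloor \log_2 N\rfloor$ rather than real sub-intervals and their interiors, which sidesteps the paper's slightly inconsistent statement about sub-interval lengths (it first says $1 + \log_2 N$, then $2 + \log_2 N$) and makes the final comparison $\lfloor n/L\rfloor \ge \lfloor n/(2+\log_2 N)\rfloor$ immediate.
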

\begin{proof}
Let $A$ be the set of all $1\le k\le n$ such that $2^k j \bmod N \in [N/3, 2N/3]$. Divide the interval $[1,n]$ into closed sub-intervals of length $1+\log_2 N$, with the last one possibly shorter. Let $I$ be such a sub-interval other than the last one, and let $I^\circ$ denote its interior. Let $k$ be the smallest integer such that $k\in I^\circ$. Then either $k\in A$, or by Lemma \ref{periodlmm}, $k+l\in A$ for some $l\le \log_2 N$. In the latter case, note that $k+l\in I^\circ$, since $I$ has length $2+\log_2 N$. Thus, the interior of each  sub-interval, except possibly the last one, must contain at least one $k\in A$. Since the interiors are disjoint, this completes the proof.
\end{proof}
\begin{lmm}\label{coslmm}
For any $1\le j\le N-1$ and any $n\ge 1$,
\begin{align*}
\biggl| \prod_{k=1}^{n} \frac{1+2\cos(2^k\pi j/N)}{3}\biggr| &\le (2/3)^{\lfloor n/(2+\log_2 N) \rfloor}. 
\end{align*}
\end{lmm}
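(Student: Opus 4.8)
The plan is to bound each of the $n$ factors in the product separately, using the trivial bound $\bigl|\tfrac{1}{3}(1+2\cos\theta)\bigr|\le 1$ for every index, and the improved bound $\bigl|\tfrac{1}{3}(1+2\cos\theta)\bigr|\le \tfrac{2}{3}$ precisely at those indices $k$ for which $2^k j \bmod N$ lands in $[N/3,2N/3]$. Lemma~\ref{countlmm} guarantees there are at least $\lfloor n/(2+\log_2 N)\rfloor$ such ``good'' indices, so multiplying the per-factor bounds yields the claim; note that since $2/3<1$, having even more good factors only decreases the product, so it does no harm to discard the excess.

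For the per-factor bound, first I would reduce each cosine to an argument in $[0,\pi]$. Fix $k$ and set $a := 2^k j \bmod N \in \{0,1,\ldots,N-1\}$; writing $2^k j = a + mN$ for an integer $m$ gives $\tfrac{2^k\pi j}{N} = \tfrac{\pi a}{N} + m\pi$, and hence $\cos(2^k\pi j/N) = (-1)^m\cos(\pi a/N)$, so that $|\cos(2^k\pi j/N)| = |\cos(\pi a/N)|$. In particular $\bigl|1+2\cos(2^k\pi j/N)\bigr| \le 1 + 2|\cos(\pi a/N)| \le 3$ for every $k$, which gives the trivial bound. When $k$ is good, i.e.\ $a\in[N/3,2N/3]$, we have $\pi a/N\in[\pi/3,2\pi/3]$, so $|\cos(\pi a/N)|\le 1/2$, and therefore $\bigl|1+2\cos(2^k\pi j/N)\bigr|\le 1+2\cdot\tfrac{1}{2} = 2$, i.e.\ the corresponding factor has absolute value at most $2/3$.

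Combining these observations, $\prod_{k=1}^{n} \bigl|\tfrac{1}{3}(1+2\cos(2^k\pi j/N))\bigr| \le (2/3)^{g}$, where $g$ denotes the number of good indices in $\{1,\ldots,n\}$. By Lemma~\ref{countlmm} we have $g \ge \lfloor n/(2+\log_2 N)\rfloor$, and since $2/3<1$ this gives the stated estimate. I do not expect any real obstacle in this step: the substantive work has already been carried out in Lemmas~\ref{periodlmm} and~\ref{countlmm}, which provide the lower bound on the number of good indices; the present lemma merely converts that combinatorial count into a product bound via the elementary per-factor inequalities above.
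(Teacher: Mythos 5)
Your argument is correct and matches the paper's proof essentially verbatim: both bound every factor trivially by $1$, bound the ``good'' factors (where $2^k j \bmod N \in [N/3, 2N/3]$) by $2/3$ via the range of the cosine, and then invoke Lemma~\ref{countlmm} to count the good indices. Your write-up simply spells out the reduction $\cos(2^k\pi j/N) = (-1)^m \cos(\pi a/N)$ with $a = 2^k j \bmod N$ a bit more explicitly than the paper does, which is a fine (and slightly clearer) presentation of the same step.
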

\begin{proof}
Note that $| \cos(2^k\pi j/N)|\le 1$ for all $k$, and if $2^k j\bmod N \in [N/3, 2N/3]$, then 
\[
\biggl|\frac{1+2 \cos(2^k\pi j/N)}{3}\biggr| \le\biggl|\frac{1+2\cos(\pi/3)}{3}\biggr| = \frac{2}{3}.
\]
By Lemma \ref{countlmm}, this completes the proof.
\end{proof}

Let us now return to the proof of Theorem \ref{cdgthm}. By \eqref{pnfourier} and Lemma \ref{coslmm},
\begin{align}\label{pnbound}
\|P^n f \|^2 &\le (2/3)^{2\lfloor n/(2+\log_2 N) \rfloor}\sum_{j=1}^{N-1} |\alpha_j|^2 = (2/3)^{2\lfloor n/(2+\log_2 N) \rfloor}\|f\|^2.
\end{align}
Now take any $h:\ms \to \R$ with $\mu h = 0$ and $\|h\|=1$. Let $f:= h-Ph$. Then note that $\mu f = 0$. Thus, by \eqref{pnbound}, the function
\begin{align*}
w := \sum_{n=0}^\infty P^n f
\end{align*}
is well-defined. We claim that $w = h$. Indeed, by \eqref{pnbound}, we get
\begin{align*}
w =\sum_{n=0}^\infty P^n (h-Ph) = \sum_{n=0}^\infty P^n h - \sum_{n=0}^\infty P^{n+1} h = h. 
\end{align*}
Thus,  by \eqref{pnbound}, 
\[
\|h\| \le \sum_{n=0}^\infty \|P^n f\| \le C\|f\|\log N,
\]
where $C$ is a positive universal constant. But we know that $\|h\|=1$. This proves that the spectral gap of the Chung--Diaconis--Graham chain is at least of order $1/\log N$. 

Next, we prove the required upper bound on the spectral gap. Let $k$ be the largest even integer such that $2^k < N/8$. For $0\le j\le k$, let 
\[
a_j := 
\begin{cases}
j (\log N)^{-3/2} &\text{ if } j\le k/2,\\
(k-j)(\log N)^{-3/2} &\text{ if } j> k/2,
\end{cases}
\]
and for $0\le j\le k+1$, let
\[
b_j :=  \prod_{l=0}^j\frac{1+2\cos(2^{l}\pi/N)}{3}. 
\]
Define a function $h:\Z/N\Z \to \C$ as
\begin{align*}
h(x) &:= \sum_{j=0}^ka_j b_j e^{2^{j+1}\pi \I x/N}. 
\end{align*}
Then note that $\mu h = 0$, and 
\begin{align*}
Ph(x) &= \E(h(X_1)|X_0=x)\\
&= \sum_{j=0}^k a_j b_j \E(e^{2^{j+1}\pi \I (2x+\ve_1)/N})\\
&= \sum_{j=0}^k a_j b_j e^{2^{j+2}\pi \I x/N}\frac{1+2 \cos(2^{j+1}\pi/N)}{3}\\
&= \sum_{j=0}^k a_j b_{j+1}   e^{2^{j+2}\pi \I x/N} = \sum_{j=1}^{k+1} a_{j-1} b_j e^{2^{j+1}\pi \I x/N}. 
\end{align*}
Thus, we have 
\begin{align*}
h(x)-Ph(x) &= a_0 b_0 e^{2\pi \I x/N} + \sum_{j=1}^k (a_j - a_{j-1}) b_j e^{2^{j+1}\pi \I x/N} - a_k b_{k+1}e^{2^{k+2}\pi \I x/N}.
\end{align*}
Since $2^k < N/8$, the numbers $2,4,8,\ldots, 2^{k+2}$ are distinct modulo $N$. Thus, the above expression gives 
\begin{align*}
\|h-Ph\|^2 &= a_0^2b_0^2 + \sum_{j=1}^k (a_j-a_{j-1})^2 b_j^2 + a_k^2b_{k+1}^2\\
&= \sum_{j=1}^k (a_j-a_{j-1})^2b_j^2 \le \sum_{j=1}^k (a_j - a_{j-1})^2 \le k (\log N)^{-3} \le C(\log N)^{-2},
\end{align*}
where $C$ is a universal constant. Next, using the fact that $\cos x \ge 1-\frac{1}{2}x^2$ for all $x\in [-1/4, 1/4]$, we have that for all $0\le j\le k$,
\begin{align*}
b_j &\ge \prod_{l=0}^j \biggl(1 - \frac{2^{2l}\pi^2}{3N^2}\biggr)\\
&\ge 1 - \sum_{l=0}^j \frac{2^{2l}\pi^2}{3N^2}\ge 1 - \frac{2^{2j+1}\pi^2}{3N^2}\ge 1-\frac{\pi^2}{64}\ge \frac{3}{4}. 
\end{align*}
Thus, we have
\begin{align*}
\|h\|^2 &= \sum_{j=0}^k a_j^2 b_j^2 \ge \frac{k^2}{16(\log N)^3}\sum_{k/4\le j\le 3k/4} b_j^2 \ge \frac{3k^3}{128(\log N)^3}. 
\end{align*}
Since $k$ is of order $\log N$, this shows that $\|h\|$ is bounded below by a positive constant that does not depend on $N$. Thus, the spectral gap of the chain is bounded above by
\[
\frac{\|(I-P)h\|}{\|h\|} \le C (\log N)^{-1},
\]
where $C>0$ does not depend on $N$.

\subsection{Proof of Theorem \ref{cardthm}}\label{cardproof}
By \cite[Example 2a]{diaconissaloffcoste96}, the second-largest singular value of $P$ is $\le 1- 1/(41N^3)$. By Theorem~\ref{revthm}, this shows that $\tau \le 41N^3$. 

For the lower bound, let us first simplify the problem as follows. Let $Q$ be the transition matrix of the Markov chain which swaps the top two cards with probability $1/2$, and takes out the bottom card and places it on top with probability $1/2$. Then $P = (I + 2Q)/3$. Thus, $I-P = 2(I-Q)/3$, which shows that if $\tau'$ is the relaxation time of this new Markov chain, then $\tau =3\tau'/2$. 

Now consider the movement of a given card (say, card number $1$) under the $Q$-chain. Let $Y_0$ be the starting location of the card, which we denote by a number between $0$ (top) and $N-1$ (bottom). Each time the card moves down one slot, or goes from the bottom of the deck to the top, we increase $Y_i$ to $Y_{i+1}=Y_i+1$. Each time it moves up one slot, we decrease $Y_{i+1}=Y_i-1$. Note that the top card can only move down, and so $Y_i\ge 0$ for all $i$. The actual location of the card in the deck at time $i$ is the remainder of $Y_i$ modulo $N$. Let us denote this by $Z_i$. 

Let $\xi_i$ be a random variable which is $0$ if the top two cards are swapped at time $i$, and $1$ if the bottom card is taken out and placed on top at time $i$. Then $\xi_0,\xi_1,\ldots$ are i.i.d.~Bernoulli$(1/2)$ random variables. Note that
\begin{align*}
Y_{i+1} &= 
\begin{cases}
Y_i + \xi_i &\text{ if } Z_i \ge 2,\\
Y_i -1 &\text{ if } Z_i = 1 \text{ and } \xi_i = 0,\\
Y_i+1 &\text{ if } Z_i = 1 \text{ and } \xi_i =1, \\
Y_i + 1 &\text{ if } Z_i = 0.
\end{cases}
\end{align*}
To put it more succinctly,
\begin{align*}
Y_{i+1} &= Y_i + \xi_i 1_{\{Z_i\ge 2\}} + (2\xi_i -1)1_{\{Z_i = 1\}} + 1_{\{Z_i = 0\}}\\
&= Y_i + \xi_i (1-1_{\{Z_i=1\}} - 1_{\{Z_i=0\}})  + (2\xi_i -1)1_{\{Z_i = 1\}} + 1_{\{Z_i = 0\}}\\
&= Y_i + \xi_i(1+1_{\{Z_i=1\}} - 1_{\{Z_i=0\}} ) + 1_{\{Z_i = 0\}} - 1_{\{Z_i = 1\}}.
\end{align*}
Let $W_i := 1_{\{Z_i=1\}} - 1_{\{Z_i=0\}}$. The above formula shows that for any~$n\ge 1$, 
\begin{align}\label{ynid0}
Y_n &= Y_0 + \sum_{i=0}^{n-1} (\xi_i(1+W_i) -W_i). 
\end{align}
Now consider a second card, and define $Y_i'$, $Z_i'$ and $W_i'$ analogously for this card. The above identity continues to hold, and therefore,
\begin{align}\label{ynid}
Y_n - Y_n' &= Y_0-Y_0' + \sum_{i=0}^{n-1} (\xi_i (W_i- W_i') -W_i+W_i')\notag \\
&= Y_0-Y_0' + \sum_{i=0}^{n-1} \eta_i(W_i- W_i') -\frac{1}{2}\sum_{i=0}^{n-1}(W_i-W_i'),
\end{align}
where $\eta_i := \xi_i - 1/2$.

Now, observe that if $Z_i=0$ for some $i$, then $Z_{i+1}$ must be $1$, because a card at position $0$ must move to position $1$ in the next step. Moreover, if $Z_i \ne 0$ for some $i$, then $Z_{i+1} \ne 1$, because a card can land up at position $1$ only if it was at position $0$ in the previous step. Thus, for any $i$, $1_{\{Z_{i+1}=1\}} = 1_{\{Z_i=0\}}$.  This shows that for any $n\ge 2$, 
\begin{align*}
\sum_{i=0}^{n-1} W_i &= \sum_{i=0}^{n-1}(1_{\{Z_i=1\}} - 1_{\{Z_i=0\}}) \\
&= 1_{\{Z_0=1\}} - 1_{\{Z_{n-1}=1\}} + \sum_{i=0}^{n-2}(1_{\{Z_{i+1}=1\}} - 1_{\{Z_i=0\}})\\
&= 1_{\{Z_0=1\}} - 1_{\{Z_{n-1}=1\}}.
\end{align*}
Using this in \eqref{ynid}, together with the analogous identity for $W_i$, we get 
\begin{align*}
Y_n - Y_n' &= Y_0-Y_0' + \sum_{i=0}^{n-1} \eta_i(W_i- W_i') \\
&\qquad \qquad -\frac{1}{2}(1_{\{Z_0=1\}} - 1_{\{Z_{n-1}=1\}} - 1_{\{Z_0'=1\}} + 1_{\{Z_{n-1}'=1\}}). 
\end{align*}
In particular,
\begin{align}\label{ynineq}
|(Y_n-Y_n') - (Y_0-Y_0')| &\le \biggl|\sum_{i=0}^{n-1} \eta_i(W_i- W_i')\biggr| + 1. 
\end{align}
We now make the crucial observation, using \eqref{ynid0} and induction, that for each $i$, $Y_i$ and $Y_i'$ are functions of $\xi_0,\ldots,\xi_{i-1}$ (fixing $Y_0$ and $Y_0'$). This shows that
\begin{align*}
\E\biggl|\sum_{i=0}^{n-1} \eta_i(W_i- W_i')\biggr|^2 &= \sum_{i=0}^{n-1} \E(\eta_i^2(W_i-W_i')^2)\\
&\le \frac{1}{2}\sum_{i=0}^{n-1} \E(W_i^2 + {W_i'}^2)\\
&= \frac{1}{2}\sum_{i=0}^{n-1} (\P(Z_i = 0 \text{ or } 1) + \P(Z_i = 0 \text{ or } 1)). 
\end{align*}
Now suppose that the initial location $Y_0$ is uniformly random over $0,\ldots,N-1$. Then so is each $Y_i$, and thus, $\P(Z_i = 0 \text{ or } 1) = 2/N$. The same holds for $Z_i'$. Thus, if $Y_0$ and $Y_0'$ are uniformly distributed over $0,\ldots, N-1$, then 
\begin{align*}
\E\biggl|\sum_{i=0}^{n-1} \eta_i(W_i- W_i')\biggr|^2 &\le \frac{2n}{N}. 
\end{align*}
Combining this with \eqref{ynineq}, we get
\begin{align}\label{ynmainineq}
\E|(Y_n-Y_n') - (Y_0-Y_0')|^2 &\le \frac{4n}{N} + 2. 
\end{align} 
Now, for $x\in S_N$, let $x(i)$ denote the position of card $i$ in the deck arranged according to the permutation $x$, where the positions are numbered from $0$ (top) to $N-1$ (bottom). Define
\[
f(x) := \cos(2\pi(x(1)-x(2))/N). 
\]
Let $X_0, X_1,\ldots$ be a stationary Markov chain on $S_N$ with transition matrix $Q$. Let $Y_i$ and $Y_i'$ be the locations of cards $1$ and $2$ at time $i$, as defined above. Then $Y_i = X_i(1)\bmod N$ and $Y_i' = X_i(2)\bmod N$.  Moreover, $Y_0$ and $Y_0'$ are uniformly distributed on $\{0,\ldots,N-1\}$, since $X_0$ is uniformly distributed on $S_N$. Thus, for any $n$, the inequality \eqref{ynmainineq} shows that 
\begin{align*}
\E|f(X_n)-f(X_0)|^2 &= \E|\cos(2\pi(X_n(1)-X_n(2))/N) - \cos(2\pi(X_0(1)-X_0(2))/N)|^2\\
&= \E|\cos(2\pi(Y_n - Y_n')/N) - \cos(2\pi(Y_0-Y_0')/N)|^2\\
&\le \frac{4}{\pi^2N^2}\E|(Y_n-Y_n') - (Y_0-Y_0')|^2\\
&\le \frac{4}{\pi^2N^2}\biggl(\frac{4n}{N} + 2\biggr). 
\end{align*}
Thus, for any $n$,
\begin{align*}
\E\biggl|\frac{1}{n}\sum_{i=0}^{n-1} f(X_i) - f(X_0)\biggr|^2 &\le \frac{1}{n}\sum_{i=0}^{n-1} \E|f(X_i)-f(X_0)|^2 \\
&\le \frac{4}{\pi^2N^2n}\sum_{i=0}^{n-1}\biggl(\frac{4i}{N} + 2\biggr)\\
&\le \frac{8n}{\pi^2 N^3} + \frac{8}{\pi^2 N^2}.
\end{align*}
On the other hand, by Theorem \ref{avgthm}, 
\begin{align*}
\E\biggl|\frac{1}{n}\sum_{i=0}^{n-1} f(X_i) - \E(f(X_0))\biggr|^2&\le \frac{4\tau'}{n}\var(f(X_0)). 
\end{align*}
Combining the above inequalities, we get
\begin{align*}
\var(f(X_0)) &\le \frac{8\tau'}{n}\var(f(X_0)) + \frac{16n}{\pi^2 N^3} + \frac{16}{\pi^2 N^2}.
\end{align*}
It is easy to see that $\var(f(X_0))$ is bounded below by a positive constant that does not depend on $N$, because $(X_0(1)-X_0(2))/N$ has a non-degenerate limiting distribution as $N\to \infty$. Thus, for any $n$, the right side of the above display is bounded below by a positive constant that does not depend on $N$ or $n$. Taking $n = cN^3$ for some sufficiently small $c$, we get the desired lower bound on $\tau'$, provided that $N$ is large enough. The extension to all $N$ follows from the observation $\tau' > 0$ for any $N$, which is a consequence of the irreducibility of the chain.

\section*{Acknowledgements}
I thank Noga Alon, Persi Diaconis, Jonathan Hermon, Hariharan Narayanan, Piyush Srivastava and the anonymous referee for many helpful comments and references. This research was partially supported by NSF grants DMS-2113242 and DMS-2153654.

\bibliographystyle{imsart-number}

\bibliography{myrefs}

\begin{thebibliography}{37}

\bibitem{aldous83}
\begin{binproceedings}[author]
\bauthor{\bsnm{Aldous},~\bfnm{David}\binits{D.}}
(\byear{1983}).
\btitle{Random walks on finite groups and rapidly mixing Markov chains}.
In \bbooktitle{S{\'e}minaire de Probabilit{\'e}s XVII 1981/82: Proceedings}
\bpages{243--297}.
\bpublisher{Springer}.
\end{binproceedings}
\endbibitem

\bibitem{boczkowskietal18}
\begin{barticle}[author]
\bauthor{\bsnm{Boczkowski},~\bfnm{Lucas}\binits{L.}},
  \bauthor{\bsnm{Peres},~\bfnm{Yuval}\binits{Y.}} \AND
  \bauthor{\bsnm{Sousi},~\bfnm{Perla}\binits{P.}}
(\byear{2018}).
\btitle{{Sensitivity of mixing times in Eulerian digraphs}}.
\bjournal{SIAM Journal on Discrete Mathematics}
\bvolume{32}
\bpages{624--655}.
\end{barticle}
\endbibitem

\bibitem{chenetal99}
\begin{binproceedings}[author]
\bauthor{\bsnm{Chen},~\bfnm{Fang}\binits{F.}},
  \bauthor{\bsnm{Lov{\'a}sz},~\bfnm{L{\'a}szl{\'o}}\binits{L.}} \AND
  \bauthor{\bsnm{Pak},~\bfnm{Igor}\binits{I.}}
(\byear{1999}).
\btitle{{Lifting Markov chains to speed up mixing}}.
In \bbooktitle{Proceedings of the 31st annual ACM symposium on Theory of
  Computing}
\bpages{275--281}.
\end{binproceedings}
\endbibitem

\bibitem{chenhwang13}
\begin{barticle}[author]
\bauthor{\bsnm{Chen},~\bfnm{Ting-Li}\binits{T.-L.}} \AND
  \bauthor{\bsnm{Hwang},~\bfnm{Chii-Ruey}\binits{C.-R.}}
(\byear{2013}).
\btitle{{Accelerating reversible Markov chains}}.
\bjournal{Statistics \& Probability Letters}
\bvolume{83}
\bpages{1956--1962}.
\end{barticle}
\endbibitem

\bibitem{chung05}
\begin{barticle}[author]
\bauthor{\bsnm{Chung},~\bfnm{Fan}\binits{F.}}
(\byear{2005}).
\btitle{{Laplacians and the Cheeger inequality for directed graphs}}.
\bjournal{Annals of Combinatorics}
\bvolume{9}
\bpages{1--19}.
\end{barticle}
\endbibitem

\bibitem{chungetal87}
\begin{barticle}[author]
\bauthor{\bsnm{Chung},~\bfnm{Fan R.~K.}\binits{F.~R.~K.}},
  \bauthor{\bsnm{Diaconis},~\bfnm{Persi}\binits{P.}} \AND
  \bauthor{\bsnm{Graham},~\bfnm{Ronald~L.}\binits{R.~L.}}
(\byear{1987}).
\btitle{Random walks arising in random number generation}.
\bjournal{Annals of Probability}
\bvolume{15}
\bpages{1148--1165}.
\end{barticle}
\endbibitem

\bibitem{diaconisetal00}
\begin{barticle}[author]
\bauthor{\bsnm{Diaconis},~\bfnm{Persi}\binits{P.}},
  \bauthor{\bsnm{Holmes},~\bfnm{Susan}\binits{S.}} \AND
  \bauthor{\bsnm{Neal},~\bfnm{Radford~M.}\binits{R.~M.}}
(\byear{2000}).
\btitle{{Analysis of a nonreversible Markov chain sampler}}.
\bjournal{Annals of Applied Probability}
\bpages{726--752}.
\end{barticle}
\endbibitem

\bibitem{diaconismiclo13}
\begin{barticle}[author]
\bauthor{\bsnm{Diaconis},~\bfnm{Persi}\binits{P.}} \AND
  \bauthor{\bsnm{Miclo},~\bfnm{Laurent}\binits{L.}}
(\byear{2013}).
\btitle{{On the spectral analysis of second-order Markov chains}}.
\bjournal{Annales de la Facult{\'e} des sciences de Toulouse:
  Math{\'e}matiques}
\bvolume{22}
\bpages{573--621}.
\end{barticle}
\endbibitem

\bibitem{diaconissaloffcoste93}
\begin{barticle}[author]
\bauthor{\bsnm{Diaconis},~\bfnm{Persi}\binits{P.}} \AND
  \bauthor{\bsnm{Saloff-Coste},~\bfnm{Laurent}\binits{L.}}
(\byear{1993}).
\btitle{{Comparison theorems for reversible Markov chains}}.
\bjournal{Annals of Applied Probability}
\bvolume{3}
\bpages{696--730}.
\end{barticle}
\endbibitem

\bibitem{diaconissaloffcoste96}
\begin{barticle}[author]
\bauthor{\bsnm{Diaconis},~\bfnm{Persi}\binits{P.}} \AND
  \bauthor{\bsnm{Saloff-Coste},~\bfnm{Laurent}\binits{L.}}
(\byear{1996}).
\btitle{{Nash inequalities for finite Markov chains}}.
\bjournal{Journal of Theoretical Probability}
\bvolume{9}
\bpages{459--510}.
\end{barticle}
\endbibitem

\bibitem{diaconisshahshahani81}
\begin{barticle}[author]
\bauthor{\bsnm{Diaconis},~\bfnm{Persi}\binits{P.}} \AND
  \bauthor{\bsnm{Shahshahani},~\bfnm{Mehrdad}\binits{M.}}
(\byear{1981}).
\btitle{Generating a random permutation with random transpositions}.
\bjournal{Zeitschrift f{\"u}r Wahrscheinlichkeitstheorie und verwandte Gebiete}
\bvolume{57}
\bpages{159--179}.
\end{barticle}
\endbibitem

\bibitem{diaconisstroock91}
\begin{barticle}[author]
\bauthor{\bsnm{Diaconis},~\bfnm{Persi}\binits{P.}} \AND
  \bauthor{\bsnm{Stroock},~\bfnm{Daniel}\binits{D.}}
(\byear{1991}).
\btitle{{Geometric bounds for eigenvalues of Markov chains}}.
\bjournal{Annals of Applied Probability}
\bpages{36--61}.
\end{barticle}
\endbibitem

\bibitem{dinwoodie95}
\begin{barticle}[author]
\bauthor{\bsnm{Dinwoodie},~\bfnm{Ian~H.}\binits{I.~H.}}
(\byear{1995}).
\btitle{{A probability inequality for the occupation measure of a reversible
  Markov chain}}.
\bjournal{Annals of Applied Probability}
\bpages{37--43}.
\end{barticle}
\endbibitem

\bibitem{dyeretal06}
\begin{barticle}[author]
\bauthor{\bsnm{Dyer},~\bfnm{Martin}\binits{M.}},
  \bauthor{\bsnm{Goldberg},~\bfnm{Leslie~Ann}\binits{L.~A.}},
  \bauthor{\bsnm{Jerrum},~\bfnm{Mark}\binits{M.}} \AND
  \bauthor{\bsnm{Martin},~\bfnm{Russell}\binits{R.}}
(\byear{2006}).
\btitle{Markov chain comparison}.
\bjournal{Probability Surveys}
\bvolume{3}
\bpages{89--111}.
\end{barticle}
\endbibitem

\bibitem{eberhardvarju21}
\begin{barticle}[author]
\bauthor{\bsnm{Eberhard},~\bfnm{Sean}\binits{S.}} \AND
  \bauthor{\bsnm{Varj{\'u}},~\bfnm{P{\'e}ter~P.}\binits{P.~P.}}
(\byear{2021}).
\btitle{{Mixing time of the Chung--Diaconis--Graham random process}}.
\bjournal{Probability Theory and Related Fields}
\bvolume{179}
\bpages{317--344}.
\end{barticle}
\endbibitem

\bibitem{fill91}
\begin{barticle}[author]
\bauthor{\bsnm{Fill},~\bfnm{James~Allen}\binits{J.~A.}}
(\byear{1991}).
\btitle{{Eigenvalue bounds on convergence to stationarity for nonreversible
  Markov chains, with an application to the exclusion process}}.
\bjournal{Annals of Applied Probability}
\bpages{62--87}.
\end{barticle}
\endbibitem

\bibitem{gillman98}
\begin{barticle}[author]
\bauthor{\bsnm{Gillman},~\bfnm{David}\binits{D.}}
(\byear{1998}).
\btitle{{A Chernoff bound for random walks on expander graphs}}.
\bjournal{SIAM Journal on Computing}
\bvolume{27}
\bpages{1203--1220}.
\end{barticle}
\endbibitem

\bibitem{gillman93}
\begin{bphdthesis}[author]
\bauthor{\bsnm{Gillman},~\bfnm{David~Wallace}\binits{D.~W.}}
(\byear{1993}).
\btitle{{Hidden Markov chains: convergence rates and the complexity of
  inference}},
\btype{PhD thesis},
\bpublisher{Massachusetts Institute of Technology}.
\end{bphdthesis}
\endbibitem

\bibitem{hermon23}
\begin{barticle}[author]
\bauthor{\bsnm{Hermon},~\bfnm{Jonathan}\binits{J.}}
(\byear{2023}).
\btitle{Relaxation times are stationary hitting times of large sets}.
\bjournal{arXiv preprint arXiv:2304.05878}.
\end{barticle}
\endbibitem

\bibitem{hermonolesker-taylor21}
\begin{barticle}[author]
\bauthor{\bsnm{Hermon},~\bfnm{Jonathan}\binits{J.}} \AND
  \bauthor{\bsnm{Olesker-Taylor},~\bfnm{Sam}\binits{S.}}
(\byear{2021}).
\btitle{{Cutoff for almost all random walks on Abelian groups}}.
\bjournal{arXiv preprint arXiv:2102.02809}.
\end{barticle}
\endbibitem

\bibitem{hermonolesker-taylor23}
\begin{barticle}[author]
\bauthor{\bsnm{Hermon},~\bfnm{Jonathan}\binits{J.}} \AND
  \bauthor{\bsnm{Olesker-Taylor},~\bfnm{Sam}\binits{S.}}
(\byear{2023}).
\btitle{{Geometry of random Cayley graphs of Abelian groups}}.
\bjournal{Annals of Applied Probability}
\bvolume{33}
\bpages{3520--3562}.
\end{barticle}
\endbibitem

\bibitem{hildebrand97}
\begin{barticle}[author]
\bauthor{\bsnm{Hildebrand},~\bfnm{M.}\binits{M.}}
(\byear{1997}).
\btitle{{Rates of convergence for a non-reversible Markov chain sampler}}.
\bjournal{Preprint}.
\end{barticle}
\endbibitem

\bibitem{hildebrand05}
\begin{barticle}[author]
\bauthor{\bsnm{Hildebrand},~\bfnm{Martin}\binits{M.}}
(\byear{2005}).
\btitle{A survey of results on random random walks on finite groups}.
\bjournal{Probability Surveys}
\bvolume{2}
\bpages{33--63}.
\end{barticle}
\endbibitem

\bibitem{hough17}
\begin{barticle}[author]
\bauthor{\bsnm{Hough},~\bfnm{Robert}\binits{R.}}
(\byear{2017}).
\btitle{Mixing and cut-off in cycle walks}.
\bjournal{Electronic Journal of Probability}
\bvolume{22}
\bpages{1--49}.
\end{barticle}
\endbibitem

\bibitem{jerrumsinclair89}
\begin{barticle}[author]
\bauthor{\bsnm{Jerrum},~\bfnm{Mark}\binits{M.}} \AND
  \bauthor{\bsnm{Sinclair},~\bfnm{Alistair}\binits{A.}}
(\byear{1989}).
\btitle{Approximating the permanent}.
\bjournal{SIAM Journal on Computing}
\bvolume{18}
\bpages{1149--1178}.
\end{barticle}
\endbibitem

\bibitem{joulinollivier10}
\begin{barticle}[author]
\bauthor{\bsnm{Joulin},~\bfnm{Ald{\'e}ric}\binits{A.}} \AND
  \bauthor{\bsnm{Ollivier},~\bfnm{Yann}\binits{Y.}}
(\byear{2010}).
\btitle{{Curvature, concentration and error estimates for Markov chain Monte
  Carlo}}.
\bjournal{Annals of Probability}
\bvolume{38}.
\end{barticle}
\endbibitem

\bibitem{kontoyiannismeyn03}
\begin{barticle}[author]
\bauthor{\bsnm{Kontoyiannis},~\bfnm{Ioannis}\binits{I.}} \AND
  \bauthor{\bsnm{Meyn},~\bfnm{Sean~P.}\binits{S.~P.}}
(\byear{2003}).
\btitle{Spectral theory and limit theorems for geometrically ergodic {M}arkov
  processes}.
\bjournal{The Annals of Applied Probability}
\bvolume{13}
\bpages{304--362}.
\end{barticle}
\endbibitem

\bibitem{kontoyiannismeyn12}
\begin{barticle}[author]
\bauthor{\bsnm{Kontoyiannis},~\bfnm{Ioannis}\binits{I.}} \AND
  \bauthor{\bsnm{Meyn},~\bfnm{Sean~P.}\binits{S.~P.}}
(\byear{2012}).
\btitle{Geometric ergodicity and the spectral gap of non-reversible {M}arkov
  chains}.
\bjournal{Probability Theory and Related Fields}
\bvolume{154}
\bpages{327--339}.
\end{barticle}
\endbibitem

\bibitem{lawlersokal88}
\begin{barticle}[author]
\bauthor{\bsnm{Lawler},~\bfnm{Gregory~F.}\binits{G.~F.}} \AND
  \bauthor{\bsnm{Sokal},~\bfnm{Alan~D.}\binits{A.~D.}}
(\byear{1988}).
\btitle{{Bounds on the $L^2$ spectrum for Markov chains and Markov processes: a
  generalization of Cheeger’s inequality}}.
\bjournal{Transactions of the American Mathematical Society}
\bvolume{309}
\bpages{557--580}.
\end{barticle}
\endbibitem

\bibitem{levinetal09}
\begin{bbook}[author]
\bauthor{\bsnm{Levin},~\bfnm{David~A.}\binits{D.~A.}},
  \bauthor{\bsnm{Peres},~\bfnm{Yuval}\binits{Y.}} \AND
  \bauthor{\bsnm{Wilmer},~\bfnm{E.~L.}\binits{E.~L.}}
(\byear{2009}).
\btitle{{Markov chains and mixing times}}.
\bpublisher{American Mathematical Soc., Providence}.
\end{bbook}
\endbibitem

\bibitem{lezaud98}
\begin{barticle}[author]
\bauthor{\bsnm{Lezaud},~\bfnm{Pascal}\binits{P.}}
(\byear{1998}).
\btitle{{Chernoff-type bound for finite Markov chains}}.
\bjournal{Annals of Applied Probability}
\bpages{849--867}.
\end{barticle}
\endbibitem

\bibitem{neal04}
\begin{barticle}[author]
\bauthor{\bsnm{Neal},~\bfnm{Radford~M.}\binits{R.~M.}}
(\byear{2004}).
\btitle{{Improving asymptotic variance of MCMC estimators: Non-reversible
  chains are better}}.
\bjournal{arXiv preprint math/0407281}.
\end{barticle}
\endbibitem

\bibitem{paulin15}
\begin{barticle}[author]
\bauthor{\bsnm{Paulin},~\bfnm{Daniel}\binits{D.}}
(\byear{2015}).
\btitle{{Concentration inequalities for Markov chains by Marton couplings and
  spectral methods}}.
\bjournal{Electron. J. Probab}
\bvolume{20}
\bpages{1--32}.
\end{barticle}
\endbibitem

\bibitem{quastel92}
\begin{barticle}[author]
\bauthor{\bsnm{Quastel},~\bfnm{Jeremy}\binits{J.}}
(\byear{1992}).
\btitle{Diffusion of color in the simple exclusion process}.
\bjournal{Communications on Pure and Applied Mathematics}
\bvolume{45}
\bpages{623--679}.
\end{barticle}
\endbibitem

\bibitem{roth55}
\begin{barticle}[author]
\bauthor{\bsnm{Roth},~\bfnm{Klaus~Friedrich}\binits{K.~F.}}
(\byear{1955}).
\btitle{Rational approximations to algebraic numbers}.
\bjournal{Mathematika}
\bvolume{2}
\bpages{1--20}.
\end{barticle}
\endbibitem

\bibitem{saloffcostezuniga07}
\begin{barticle}[author]
\bauthor{\bsnm{Saloff-Coste},~\bfnm{Laurent}\binits{L.}} \AND
  \bauthor{\bsnm{Z{\'u}\~{n}iga},~\bfnm{Jessica}\binits{J.}}
(\byear{2007}).
\btitle{{Convergence of some time inhomogeneous Markov chains via spectral
  techniques}}.
\bjournal{Stochastic Processes and their Applications}
\bvolume{117}
\bpages{961--979}.
\end{barticle}
\endbibitem

\bibitem{wolferkontorovich19}
\begin{binproceedings}[author]
\bauthor{\bsnm{Wolfer},~\bfnm{Geoffrey}\binits{G.}} \AND
  \bauthor{\bsnm{Kontorovich},~\bfnm{Aryeh}\binits{A.}}
(\byear{2019}).
\btitle{Estimating the mixing time of ergodic markov chains}.
In \bbooktitle{Proceedings of Machine Learning Research}
\bvolume{99}
\bpages{1--40}.
\bpublisher{PMLR}.
\end{binproceedings}
\endbibitem

\end{thebibliography}

\end{document}